\renewcommand*{\backref}[1]{}
\renewcommand*{\backrefalt}[4]{%
    \ifcase #1 (Not cited.)%
    \or        (Cited on page~#2.)%
    \else      (Cited on pages~#2.)%
    \fi}
\numberwithin{equation}{section}
\newtheorem{lem}{Lemma}[section]
\newtheorem{prop}[lem]{Proposition}
\newtheorem{thm}[lem]{Theorem}
\newtheorem{cor}[lem]{Corollary}
\newtheorem*{conjecture}{Conjecture}
\newtheorem*{thmA}{Theorem A}
\newtheorem*{thmB}{Theorem B}
\newtheorem*{thmC}{Theorem C}
\newtheorem*{thmD}{Theorem D}
\theoremstyle{definition}
\newtheorem{defn}[lem]{Definition}
\newtheorem{exmp}[lem]{Example}
\newtheorem{rem}[lem]{Remark}
\renewcommand{\P}{\mathbb{P}}
\newcommand{\Q}{\mathbb{Q}}
\newcommand{\Z}{\mathbb{Z}}
\newcommand{\R}{\mathbb{R}}
\newcommand{\C}{\mathbb{C}}
\renewcommand{\O}{\mathscr{O}}
\newcommand\td{{\rm{td}}}
\newcommand\ch{{\rm{ch}}}
\newcommand{\sO}{\mathcal{O}}
\newcommand{\cF}{\mathcal{F}}
\newcommand{\cX}{\mathcal{X}}
\newcommand{\fM}{\mathfrak{M}}
\newcommand{\cP}{\mathcal{P}}
\DeclareMathOperator{\divv}{div}
\DeclareMathOperator{\Deflt}{Def_{\operatorname{lt}}}
\DeclareMathOperator{\oH}{H}
\DeclareMathOperator{\Hilb}{Hilb}
\DeclareMathOperator{\Hom}{Hom}
\DeclareMathOperator{\Mon}{Mon^2_{\operatorname{lt}}}
\DeclareMathOperator{\Or}{O}
\DeclareMathOperator{\Pic}{Pic}
\DeclareMathOperator{\RR}{RR}
\DeclareMathOperator{\Aut}{Aut}
\DeclareMathOperator{\Hdg}{Hdg}
\let\@wraptoccontribs\wraptoccontribs
\subjclass[2020]{Primary: 14J42}
\keywords{Complex Algebraic Geometry, singular symplectic varieties, lagrangian fibrations}
\begin{document}

\title{The SYZ conjecture for singular moduli spaces of sheaves on K3 surfaces}
\author{Claudio Onorati}
\address{\newline
Alma Mater studiorum Universit\`a di Bologna\hfill\newline
Dipartimento di Matematica\hfill\newline
Piazza di porta San Donato 5, 40126 Bologna, Italia}
\email[]{claudio.onorati@unibo.it}

\author{\'Angel David R\'ios Ortiz}
\address{Université Paris Cité and Sorbonne Université, CNRS, IMJ-PRG, F-75013 Paris, France}
    \email{riosortiz@imj-prg.fr}

\begin{abstract}
    In this paper we prove the SYZ conjecture for irreducible symplectic varieties that are locally trivial deformation equivalent to moduli spaces of sheaves on K3 surfaces.
    As an intermediate step in the argument, we generalise to the singular setting a result of Kamenova--Verbitsky and Matsushita about moduli spaces of lagrangian fibrations of primitive symplectic varieties. Two further corollaries are also presented: the computation of the Huybrechts--Riemann--Roch polynomial and of the polarisation type of this kind of symplectic varieties.
\end{abstract}

\maketitle

\tableofcontents

\section*{Introduction}
The SYZ conjecture for irreducible holomorphic symplectic manifolds predicts that nef and isotropic line bundles are associated to lagrangian fibrations (cf.\ \cite[Conjecture~4.1]{Sawon:AbelianFibred}). Here isotropic means with respect to the Beauville--Bogomolov--Fujiki form. 
The conjecture holds for all known irreducible holomorphic symplectic manifolds, we refer to \cite{DHMV2024} for an updated reference and for an important proof in the case of fourfolds satisfying a topological constraint (see also Section~\ref{section:SYZ}).

Recently, considerable interest has arisen in the theory of \emph{singular symplectic varieties}. In this setting, one can formulate an analogous version of the SYZ conjecture.

\begin{conjecture}[SYZ conjecture for primitive symplectic varieties]
    Let $X$ be a primitive symplectic variety and $L$ a line bundle on it. If $L$ is nef and isotropic, then there exists a lagrangian fibration $f\colon X\to B$ such that $L=f^*\sO_B(1)$.
\end{conjecture}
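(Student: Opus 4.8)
Since the conjecture is proved here for $X$ that is locally trivial deformation equivalent to a moduli space of sheaves on a projective K3 surface, I would argue within that class by a deformation argument anchored at an explicit model. The first reduction is to the singular version of Matsushita's theorem: any surjective morphism with connected fibres from a primitive symplectic variety onto a lower-dimensional normal base is automatically a lagrangian fibration, with generic fibre an abelian variety of half the dimension. Granting this, a factorisation $L=f^*\sO_B(1)$ is nothing but the fibration attached to a semiample $L$, so the conjecture reduces to showing that the nef isotropic class $L$ is semiample (and, for the stated primitive form, primitively so).

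Next I would transport the question to a model. Invoking the surjectivity of the period map and the global Torelli theorem for this locally trivial deformation class, I would spread $(X,L)$ out in a locally trivial family $\cX\to T$ and select a fibre isomorphic to a moduli space $M_v(S,H)$ on which the parallel transport $L_t$ of $L$ is realised as the pullback of an ample class along a Beauville--Mukai integrable system. The existence of this fibration on the model is furnished by the Fourier--Mukai and wall-crossing techniques of Bayer--Macr\`{\i} for moduli of (Bridgeland-)stable objects on K3 surfaces: an isotropic Mukai vector produces a support morphism onto a linear system of curves, which is precisely the semiample fibration realising $L_t$ as a pullback.

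The crux, and the step I expect to be the main obstacle, is the singular generalisation of the Kamenova--Verbitsky and Matsushita results on moduli of lagrangian fibrations: I must prove that, inside the period domain, the locus of periods for which the distinguished isotropic Hodge class is lagrangian --- equivalently semiample --- is both open and closed. Openness and closedness together force this locus to contain the entire connected family once it meets the model, whence semiampleness descends to $(X,L)$ and the first reduction upgrades it to a lagrangian fibration. The difficulty is that the smooth toolkit is unavailable: one cannot invoke smooth Hodge theory or Hwang's rigidity of the base, and must instead work through the locally trivial deformation theory of Bakker--Lehn, track the relative canonical model along $\cX\to T$, and ensure that projectivity and the half-dimensionality of the base persist across singular fibres and the singularities of the total space. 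Controlling these behaviours through the degeneration is the technical heart of the proof; the primitive normalisation $L=f^*\sO_B(1)$ then follows from a lattice-theoretic comparison on the model.
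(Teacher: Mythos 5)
Your overall architecture agrees with the paper's: reduce to semiampleness via the singular Matsushita/Schwald theorem, transport the class to a Beauville--Mukai model by a parallel transport operator, and deform the fibration back. But the step you correctly identify as the crux is formulated in a way that cannot work. The locus in $\fM_\ell^0$ (equivalently in the relevant component $\Omega_\ell^+$ of the period domain) where $\eta^{-1}(\ell)$ defines a lagrangian fibration is \emph{not} closed, and an open-and-closed argument would prove too much: it would show that \emph{every} isotropic $(1,1)$-class with the right orientation induces a regular lagrangian fibration, which is false already for $\operatorname{K3}^{[n]}$ (an isotropic class can lie in the birational K\"ahler cone but fail to be nef, giving only a rational lagrangian fibration). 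The correct statement, and the one the paper proves, is that the lagrangian locus is open and \emph{dense} and coincides with the \emph{nef} locus $\fM_\ell^{\operatorname{nef}}$. The passage from density to ``every nef point is lagrangian'' is not topological: one takes a disc through a nef point meeting the lagrangian locus away from the centre, invokes Nakayama's result that $R^ip_*\mathcal{L}^k$ is locally free when the special fibre is quasi-nef and the nearby fibres are semiample, and then applies Matsushita's criterion that $L$ defines a lagrangian fibration iff it is nef and $h^0(X,L^k)=h^0(\P^n,\sO(k))$ for all $k$. The nefness hypothesis on $L$ is exactly what makes this limit argument run; your formulation makes no use of it and so cannot be repaired by closedness alone.

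Two smaller points. First, the transport to the model is not a soft consequence of global Torelli: one must show that the monodromy orbit of an arbitrary primitive isotropic class of given divisibility contains the fibration class $p_v^*\sO(1)$ of a Beauville--Mukai system, and in the singular case the paper does this by restricting to the most singular stratum $Y$ (of type $\operatorname{K3}^{[k]}$), using that $i_Y^*$ is $m$ times an isometry and identifies the monodromy groups, then quoting Markman's smooth classification and the computation $p_v^*\sO(1)=\lambda_v(b)$. Second, Bayer--Macr\`{\i} wall-crossing is not available for the non-primitive Mukai vectors $v=mw$ that produce the singular moduli spaces; the fibration on the model is instead the elementary Fitting-support morphism, and the primitivity statement $L=f^*\sO_B(1)$ ultimately uses the Huybrechts--Riemann--Roch computation rather than a direct lattice comparison.
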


Primitive symplectic varieties are compact K\"ahler spaces and they have a well-defined K\"ahler cone (see Section~\ref{section:symplectic}). Then a class $\alpha$ of type $(1,1)$ is \emph{nef} if it belongs to the closure of the K\"ahler cone.


The first goal of this work is to establish the SYZ conjecture for a distinguished class of symplectic varieties, namely those that are locally trivial deformation equivalent to singular moduli spaces of sheaves on K3 surfaces. Let us fix the terminology. If $S$ is a projective K3 surface, we consider moduli spaces of sheaves $M_v(S,H)$, where $v=mw$ is a Mukai vector (here $m\geq1$ and $w$ is primitive) and $H$ is a $v$-general polarisation. We refer to Section~\ref{section:moduli spaces of sheaves on K3} for generalities about moduli spaces of sheaves on K3 surfaces. Under our assumptions, the spaces $M_v(S,H)$ are irreducible symplectic varieties (\cite{PR:SingularVarieties}). Any irreducible symplectic variety that is a locally trivial deformation of a variety of the form $M_v(S,H)$ as above, with $v=mw$ and $w^2=2(k-1)$, will be called of type $\operatorname{K3}^{[k]}_m$. 
Symplectic varieties of type $\operatorname{K3}^{[k]}_m$ have dimension $2(k-1)\,m^2+2$.
If $m=1$ the moduli space is smooth and of type $\operatorname{K3}^{[k]}$, which motivated our terminology.

\begin{thmA}[Theorem~\ref{thm:SYZ for M(m,k)} and Remark~\ref{remark:L is always primitive}]
    Let $X$ be a symplectic variety of type $\operatorname{K3}^{[k]}_m$. Assume that $m>1$ and that if $m=2$ then $k>2$. If $L$ is a nef and isotropic line bundle, then there exists a lagrangian fibration $f\colon X\to \P$ such that $L=f^*\sO_\P(1)$.
\end{thmA}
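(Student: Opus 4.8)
The plan is to run the standard deformation-theoretic strategy for SYZ-type statements: produce a moduli space of sheaves carrying an explicit Beauville--Mukai lagrangian fibration, connect $(X,L)$ to it inside a family of locally trivial deformations along which the distinguished class stays of type $(1,1)$ and nef, and then transport the fibration back to $X$ using the generalisation of the Kamenova--Verbitsky and Matsushita theorem on moduli of lagrangian fibrations established earlier in the paper.

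First I would set up the lattice-theoretic picture. By the locally trivial deformation theory the second cohomology $\oH^2(X,\Z)$ carries a Beauville--Bogomolov--Fujiki form making it isometric to the fixed abstract lattice attached to the type $\operatorname{K3}^{[k]}_m$, and the nef isotropic $L$ determines a primitive isotropic vector $\ell$ in the Picard lattice. The key algebraic input is that, under the action of the monodromy group $\Mon$, primitive isotropic vectors of a fixed divisibility form a single orbit (an Eichler-type transitivity statement); I expect the numerical hypotheses $m>1$ and $m=2\Rightarrow k>2$ to be exactly what is needed for this orbit statement and for the realisation of $\ell$ as a fibration class. Choosing a Mukai vector of the form $v=(0,\beta,\chi)$, the support morphism $M_v(S,H)\to|\beta|\cong\P$ is a lagrangian fibration, so there is a model of type $\operatorname{K3}^{[k]}_m$ on which the corresponding isotropic class $\ell_0$ is the pullback of $\sO_\P(1)$.

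Next I would use the surjectivity of the period map together with the Global Torelli theorem for irreducible symplectic varieties in the locally trivial setting to connect the marked pair $(X,L)$ to the marked model $(M_v(S,H),L_0)$ through a path in the relevant Hodge locus keeping the class of type $(1,1)$. The delicate point is to keep the class nef along the path: it must be chosen inside one chamber of the wall-and-chamber decomposition cut out on the positive cone by the MBM (or prime exceptional) classes, using the description of the K\"ahler cone of primitive symplectic varieties and the monodromy action on these walls. I expect this nef/wall-crossing step, together with verifying that the support class genuinely represents the required monodromy orbit, to be the main obstacle, and the place where the hypotheses on $m$ and $k$ really intervene.

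Finally I would invoke the intermediate result generalising Kamenova--Verbitsky and Matsushita: the locus of locally trivial deformations admitting a lagrangian fibration inducing the chosen isotropic class is closed and, being nonempty thanks to the Beauville--Mukai model, contains $X$. Hence $X$ admits a lagrangian fibration $f\colon X\to B$, with $L$ proportional to the pullback of an ample class on $B$. It remains to identify $B$: as the base of a lagrangian fibration of a variety of $\operatorname{K3}^{[k]}$-type it is a normal projective variety of dimension $\tfrac12\dim X=(k-1)m^2+1$, and the structure results on such bases force $B\cong\P^{(k-1)m^2+1}$. The primitivity of $L$ (\cref{remark:L is always primitive}) then upgrades the relation to the exact equality $L=f^*\sO_\P(1)$.
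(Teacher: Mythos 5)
Your overall architecture --- produce a Beauville--Mukai model, parallel-transport the isotropic class onto the fibration class of that model, then invoke the generalised Kamenova--Verbitsky/Matsushita theorem to carry the fibration back to $X$ --- is exactly the paper's (Proposition~\ref{prop:quasi SYZ} followed by Theorem~\ref{thm:main result on lagr}). However, the step you single out as ``the main obstacle'' is based on a misreading of how that intermediate theorem is used, and as written it leaves a gap. You do \emph{not} need to keep the class nef along the connecting path, and the paper never attempts to: the content of Proposition~\ref{prop:nef=lagr} is that $\fM_\ell^{\operatorname{lagr}}$ is open and dense in the \emph{entire} component $\fM^0_\ell$ of the Hodge locus and coincides there with $\fM_\ell^{\operatorname{nef}}$. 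So it suffices that $(X,\eta)$ and the marked Beauville--Mukai model lie in the same component of $\fM^0_\ell$ --- which the parallel transport operator gives for free --- and that $L$ is nef on $X$ itself; no wall-and-chamber or MBM analysis of the positive cone enters anywhere. Relatedly, your final paragraph asserts that the lagrangian locus is \emph{closed}; it is not (a closed dense subset of the connected space $\fM^0_\ell$ would be everything, while the complement of $\fM_\ell^{\operatorname{lagr}}$ is in general a nonempty countable union of hypersurfaces, cf.\ Remark~\ref{remark:complement of hypersurfaces}). The substantive inclusion is $\fM_\ell^{\operatorname{nef}}\subseteq\fM_\ell^{\operatorname{lagr}}$, proved by degenerating along a disc whose punctured part lies in the lagrangian locus and combining Nakayama's local freeness of the direct images $R^i\pi_*\mathcal{L}^{\otimes k}$ (Proposition~\ref{prop:Nakayama}) with Matsushita's criterion $h^0(X,L^{\otimes k})=h^0(\P^n,\sO(k))$ (Lemma~\ref{lemma:semiample if and only if nef +}). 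If you replace your wall-crossing step by this statement, the gap closes.

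Two smaller corrections on the first step. The paper does not prove an Eichler-type transitivity statement directly on the singular lattice: it restricts to the most singular stratum $Y\subset X$, an irreducible holomorphic symplectic manifold of type $\operatorname{K3}^{[k]}$, uses that $i_Y^*$ is $m$ times a Hodge isometry identifying the monodromy groups, and imports Markman's classification of orbits of primitive isotropic classes from the smooth case; one then needs Lemma~\ref{lemma:b=b} to identify the resulting class on $M_v(S,H)$ with $p_v^*\sO(1)$ rather than a multiple of it. And the numerical hypotheses $m>1$, $(m,k)\neq(2,2)$ play no role in the orbit computation: they merely exclude the cases already settled in the literature ($m=1$ is the smooth $\operatorname{K3}^{[k]}$ case, and $(m,k)=(2,2)$ follows from the OG10 crepant resolution), and Theorem~\ref{thm:SYZ for M(m,k)} in the body of the paper is stated without them.
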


The numerical hypotheses on $m$ and $k$ are not restrictive. In fact, if $m=1$, i.e.\ $X$ is smooth, then the result is known (\cite{Markman:LagrangianFibrations,Wieneck2016}); if $m=2$ and $k=2$, then $X$ admits a crepant resolution of singularities that is of type OG10 and the result follows from the latter (\cite{MO}).

Recall that $\oH^2(X,\Z)$ has a non-degenerate quadratic form $q_X$ (see Section~\ref{section:BBF}); in the statement above we say that a line bundle $L$ is \emph{isotropic} if $q_X(c_1(L))=0$.

To the best of our knowledge, this is the first class of singular symplectic varieties for which the SYZ conjecture is proved.

Let us outline the main ideas behind the proof of Theorem~A. First of all, there exists a stratification of $X$ by singular loci. The most singular stratum is an irreducible holomorphic symplectic manifold $Y$ of type $\operatorname{K3}^{[k]}$. The geometry of $X$ very closely resembles that of $Y$. For instance, if we denote by $i\colon Y\to X$ the closed embedding, then the pullback $i^*\colon\oH^2(X,\Z)\to\oH^2(Y,\Z)$ is $m$-times an isometry, and it induces an isomorphism between the respective monodromy groups (see \cite{OPR}). We use these results to reduce the classification of monodromy-orbits of primitive isotropic vectors in $\oH^2(X,\Z)$ to the same classification in $\oH^2(Y,\Z)$, which was performed by Markman (\cite[Section~2]{Markman:LagrangianFibrations}) -- the geometric outcome is Proposition~\ref{prop:quasi SYZ}.

From here then Theorem~A follows from a result about deformations of lagrangian fibrations, which is our second main result.

First of all, for a primitive symplectic variety $\bar X$, we denote by $\Lambda$ an abstract lattice such that $\oH^2(\bar X,\Z)\cong\Lambda$. Then $\fM_\Lambda$ stands for the moduli space of marked pairs $(X,\eta)$, where $X$ is locally trivial deformation equivalent to $\bar X$ and $\eta\colon\oH^2(X,\Z)\to\Lambda$ is an isometry. We denote by $\fM^0_\Lambda$ a connected component of $\fM_\Lambda$. If $\ell\in\Lambda$ is an isotropic element, then $\fM_\ell^0\subset\fM_\Lambda^0$ is a connected component of the subspace of pairs $(X,\eta)$ such that $\eta^{-1}(\ell)$ is of type $(1,1)$ on $X$. Inside $\fM^0_\ell$ we consider the two spaces
\[ \fM_\ell^{\operatorname{nef}}:=\left\{ (X,\eta)\in\fM_\ell^0\mid \eta^{-1}(\ell) \mbox{ is nef} \right\} \]
and 
\[ \fM_\ell^{\operatorname{lagr}}:=\left\{ (X,\eta)\in\fM_\ell^0\mid \eta^{-1}(\ell)  \mbox{ defines a lagrangian fibration} \right\}. \]
Here we say that $\eta^{-1}(\ell)$ defines a lagrangian fibration if there is a lagrangian fibration $f\colon X\to B$ such that $\eta^{-1}(\ell)=f^*\sO_B(1)$.

We refer to Section~\ref{section: moduli spaces of lagrangian fibrations} for the precise definitions and constructions.

\begin{thmB}[Proposition~\ref{prop:nef=lagr}]
    Assume that the varieties parametrised by $\fM_\Lambda$ are $\Q$-factorial and terminal. If $\fM_\ell^{\operatorname{lagr}}\neq\emptyset$, then $\fM_\ell^{\operatorname{lagr}}\subset\fM^0_\ell$ is open and dense. Moreover, in this case, we have an equality 
    \[ \fM_\ell^{\operatorname{lagr}}=\fM_\ell^{\operatorname{nef}}. \]
\end{thmB}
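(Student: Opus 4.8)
The plan is to prove three facts about the nested subsets $\fM_\ell^{\operatorname{lagr}}\subseteq\fM_\ell^{\operatorname{nef}}\subseteq\fM_\ell^0$ and then to combine them by a connectedness argument. The inclusion $\fM_\ell^{\operatorname{lagr}}\subseteq\fM_\ell^{\operatorname{nef}}$ is immediate, since the pullback $f^*\sO_B(1)$ of an ample class under a fibration is nef. By local Torelli for locally trivial deformations of primitive symplectic varieties, the period map $\fM_\ell^0\to\Omega_\ell$ to the period domain $\Omega_\ell=\{[\sigma]\in\P(\ell^\perp\otimes\C)\mid \sigma^2=0,\ \sigma\bar\sigma>0\}$ is a local isomorphism; in particular $\fM_\ell^0$ is a connected complex manifold and the wall loci below are genuine analytic hypersurfaces. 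The three facts I would establish are: (i) $\fM_\ell^{\operatorname{lagr}}$ is open in $\fM_\ell^0$; (ii) $\fM_\ell^{\operatorname{lagr}}$ is closed in $\fM_\ell^{\operatorname{nef}}$; and (iii) $\fM_\ell^{\operatorname{nef}}$ is connected and dense in $\fM_\ell^0$.

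For (i) I would invoke the deformation theory of lagrangian fibrations in the singular setting, that is, the $\Q$-factorial terminal analogue of Matsushita's theorem. Given a lagrangian fibration $f_0\colon X_0\to B_0$ with $B_0\cong\P^n$ and $\eta_0^{-1}(\ell)=f_0^*\sO_{B_0}(1)$, the class $\eta^{-1}(\ell)$ stays of type $(1,1)$ throughout $\fM_\ell^0$, the base $\P^n$ is rigid, and the relative structure deforms without obstruction over a neighbourhood of $(X_0,\eta_0)$; hence $f_0$ spreads out to a family of lagrangian fibrations and $\fM_\ell^{\operatorname{lagr}}$ is open. This is where the hypotheses $\Q$-factorial and terminal are genuinely used: they guarantee that neither the fibres nor the total space carry small or divisorial modifications that could obstruct the deformation, and that the base is projective space. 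For (ii), the key input is the deformation invariance of $h^0(X,m\,\eta^{-1}(\ell))$ along $\fM_\ell^{\operatorname{lagr}}$, which equals $\binom{n+m}{m}$ because the base is $\P^n$. If $(X_t,\eta_t)\in\fM_\ell^{\operatorname{lagr}}$ specialise to $(X_0,\eta_0)\in\fM_\ell^{\operatorname{nef}}$, then by semicontinuity $h^0(X_0,m\,\eta_0^{-1}(\ell))\ge\binom{n+m}{m}$, while nefness and isotropy of the limit class bound this dimension from above, via Riemann--Roch and the boundedness of sections of a nef isotropic class. Equality forces $\eta_0^{-1}(\ell)$ to be semiample with the same Hilbert polynomial, so the limit morphism is again a lagrangian fibration onto $\P^n$ and $(X_0,\eta_0)\in\fM_\ell^{\operatorname{lagr}}$. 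I expect step (ii) to be the main obstacle: controlling the limit of the morphisms $f_t$ and ruling out both jumping of sections and degeneration of the base in the singular category is exactly the delicate point, and it is the place where the singular Hodge theory developed in the paper must be brought to bear.

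For (iii) I would use the description of the Kähler cone of a $\Q$-factorial terminal primitive symplectic variety as the intersection of the positive cone with the half-spaces cut out by a locally finite family of wall classes. Since $\ell$ is isotropic it lies on the boundary of the positive cone, and $\eta^{-1}(\ell)$ fails to be nef only when some wall class $\kappa$ becomes active, a condition contained in the period hypersurface $\{[\sigma]\in\Omega_\ell\mid\sigma\cdot\kappa=0\}$. Thus the complement $\fM_\ell^0\setminus\fM_\ell^{\operatorname{nef}}$ is contained in a locally finite union of analytic hypersurfaces; its complement $U$ is therefore connected and dense, and since $U\subseteq\fM_\ell^{\operatorname{nef}}\subseteq\overline U=\fM_\ell^0$, the locus $\fM_\ell^{\operatorname{nef}}$ is itself connected and dense. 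Finally I would assemble the argument: by (i) and the inclusion $\fM_\ell^{\operatorname{lagr}}\subseteq\fM_\ell^{\operatorname{nef}}$ the lagrangian locus is open in $\fM_\ell^{\operatorname{nef}}$; by (ii) it is closed there; and by hypothesis it is non-empty. A non-empty open and closed subset of the connected space $\fM_\ell^{\operatorname{nef}}$ equals it, giving $\fM_\ell^{\operatorname{lagr}}=\fM_\ell^{\operatorname{nef}}$, which is then open and dense in $\fM_\ell^0$ by (iii).
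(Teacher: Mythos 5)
Your overall architecture (lagrangian locus open; closed inside the nef locus; nef locus connected and dense) is the same skeleton as the paper's, which follows Kamenova--Verbitsky and Matsushita: openness comes from the deformation theory of lagrangian fibrations (\cref{prop:higher direct images are free}), and the final identification uses Matsushita's numerical criterion that a nef isotropic $L$ defines a lagrangian fibration iff $h^0(X,L^{\otimes k})=h^0(\P^n,\sO(k))$ for all $k$ (\cref{lemma:semiample if and only if nef +}). The differences are in how the two nontrivial inputs are supplied, and one of them is a genuine gap. In your step (ii) you obtain $h^0(X_0,k\,\eta_0^{-1}(\ell))\ge\binom{n+k}{n}$ by semicontinuity and then claim a matching upper bound ``via Riemann--Roch and the boundedness of sections of a nef isotropic class.'' Riemann--Roch cannot give this: for an isotropic class $q_X(L)=0$, the Huybrechts--Riemann--Roch polynomial yields $\chi(X,L^{\otimes k})=\RR_X(0)$, a constant independent of $k$, whereas $h^0(X,L^{\otimes k})=\binom{n+k}{n}$ grows like $k^n$; the higher cohomology of $L^{\otimes k}$ does not vanish (already for an honest lagrangian fibration $R^if_*\sO_X$ is nonzero for $i>0$), so $\chi$ gives no upper bound on $h^0$, and a numerical-dimension bound of the form $h^0\le Ck^n$ does not pin down the constant needed to apply the criterion as an equality. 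This is exactly the point you flag as ``the main obstacle,'' and the sketch does not close it. The paper's solution is different in kind: it applies Nakayama's theorem (\cref{prop:Nakayama}) to a disc $\Delta$ with $\Delta\setminus\{0\}$ in the lagrangian locus and $\mathcal{L}_0$ nef, concluding that $\pi_*\mathcal{L}^{\otimes k}$ is locally free with fibres computing $h^0$; this gives the exact equality $h^0(X_0,\mathcal{L}_0^{\otimes k})=h^0(\P^n,\sO(k))$ with no a priori upper bound required. (The same mechanism, together with the semiampleness of $\mathcal{L}_t$ at very general points, \cref{lemma:molto general is semiample}, is what drives the density statement.) If you want to avoid Nakayama, the lower bound alone could in principle be salvaged by arguing $\kappa(L_0)=\nu(L_0)=n$ and invoking abundance for nef classes with $\nu=\kappa$ on klt Kähler spaces, but that is a substantial external theorem you would need to cite, not a consequence of Riemann--Roch.

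The second difference is the connectedness input. You propose that $\fM_\ell^{\operatorname{nef}}$ is connected and dense because its complement lies in a locally finite union of wall hypersurfaces; this presupposes a wall-and-chamber description of the K\"ahler cone of $\Q$-factorial terminal primitive symplectic varieties, which the paper does not use. Instead the paper proves that $\fM_\ell^0$ itself is path-connected (\cref{prop:M ell 0 is path connected}), and the hard content there is a Torelli-type rigidity statement (\cref{lemma:cycli pic is iso}): two inseparable marked points with cyclic Picard group generated by an isotropic class are actually isomorphic. Some such global input is unavoidable, and your wall-based route would need to be backed by a precise reference; as written it is asserted rather than proved. In summary: the decomposition of the problem is right, but the closedness step (ii) rests on an upper bound that Riemann--Roch does not provide, and the correct replacement is the local freeness of the pushforwards $\pi_*\mathcal{L}^{\otimes k}$ across the nef limit.
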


This result generalises to the singular case a result of Kamenova--Verbitsky (\cite[Theorem~3.4]{KamenovaVerbitsky:FamiliesOfLagrangianFibrations} -- see also Matsushita \cite[Lemma~3.4]{Matsushita:Isotropic2017}). 

As a corollary of Theorem~B, we get two more results about the geometry of symplectic varieties of type $\operatorname{K3}^{[k]}_m$. The first one is about the polarisation type of lagrangian fibrations of such varieties. Recall that for a lagrangian fibration $f\colon X\to B$ the general fibre is an abelian variety endowed with a distinguished polarisation: the polarisation type of $f\colon X\to B$ is the polarisation type of any of its general fibres, and it is invariant by locally trivial deformations of $f\colon X\to B$ (see Section~\ref{section:polarisation types}).

\begin{thmC}[Theorem~\ref{thm:polarisation type of M(m,k)}]
    Let $f\colon X\to B$ be a lagrangian fibration with $X$ of type $\operatorname{K3}^{[k]}_m$. Then the polarisation type of $f$ is
    \[ \mathrm{d}(f)=(1,\dots,1). \]
\end{thmC}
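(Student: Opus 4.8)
The plan is to reduce the statement, via deformation invariance together with Theorem~B, to a single explicit computation on a Beauville--Mukai integrable system, where the general fibre is a genuine Jacobian and therefore carries a principal polarisation.

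First I would record that the polarisation type $\mathrm{d}(f)$ is constant in locally trivial families of lagrangian fibrations (as recalled before the statement). Varieties of type $\operatorname{K3}^{[k]}_m$ are $\Q$-factorial and terminal, so Theorem~B applies: whenever it is nonempty, $\fM_\ell^{\operatorname{lagr}}$ is open and dense in the connected component $\fM_\ell^0$. Since $\fM_\ell^0$ is a connected complex space and its complement $\fM_\ell^0\setminus\fM_\ell^{\operatorname{lagr}}$ is a closed analytic subset of positive codimension, the locus $\fM_\ell^{\operatorname{lagr}}$ is itself connected; as $\mathrm{d}(f)$ is locally constant on it, it is in fact globally constant there. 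Combining this with the monodromy-orbit classification of primitive isotropic classes --- reduced, through the map $i^\ast$ and the results of \cite{OPR}, to Markman's classification \cite{Markman:LagrangianFibrations} on a manifold $Y$ of type $\operatorname{K3}^{[k]}$ --- every nef and isotropic class is monodromy equivalent to a class realised by an explicit moduli space. Because the monodromy action is induced by parallel transport, it preserves $\mathrm{d}(f)$, so it suffices to compute the polarisation type of one lagrangian fibration per such orbit.

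For the model I would take a projective K3 surface $S$ carrying a primitive genus-$k$ linear system $|C|$, so that $C^2=2(k-1)=w^2$, the Mukai vector $v=mw=(0,mC,ma)$, and a $v$-general polarisation $H$. Then $M_v(S,H)$ is of type $\operatorname{K3}^{[k]}_m$, and the support morphism $f\colon M_v(S,H)\to|mC|\cong\P^{m^2(k-1)+1}$ is a lagrangian fibration. The dimensions match, since $\dim|mC|=m^2(k-1)+1$ equals the genus $g(D)$ of a smooth member $D\in|mC|$ and $\dim M_v=v^2+2=2m^2(k-1)+2$. Over a general such $D$, which is smooth and integral, a stable sheaf with Mukai vector $v$ supported on $D$ is a line bundle, so the fibre is the Jacobian $\operatorname{Jac}^d(D)$, an abelian variety of dimension $g(D)$ whose natural polarisation --- the one induced by the lagrangian fibration --- is the principal theta polarisation. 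Hence $\mathrm{d}(f)=(1,\dots,1)$ on this model, and therefore, by the previous paragraph, on all varieties of type $\operatorname{K3}^{[k]}_m$.

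The main obstacle is the reduction step rather than the final computation. One must ensure that every monodromy orbit of primitive nef isotropic classes is represented by a system with Jacobian fibres: when $w=(r,c,s)$ has $r\neq0$ there is no support morphism, and one first transports the isotropic class to the rank-zero situation by a Fourier--Mukai equivalence, which does not affect the polarisation type; more delicately, one must check that in the regime under consideration no orbit of higher divisibility (which would produce a non-principal type, as in the smooth case) survives, and this is precisely the divisibility computation fed by the relation between $\oH^2(X,\Z)$ and $\oH^2(Y,\Z)$ through $i^\ast$. A secondary point to verify carefully is that the polarisation the fibration induces on $\operatorname{Jac}^d(D)$ is exactly the principal theta polarisation and not a nontrivial multiple of it; since the relevant fibres lie over smooth integral curves, this reduces to the classical computation for the Beauville system, which extends without change to the singular moduli space.
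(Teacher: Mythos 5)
Your proposal is correct and follows essentially the same route as the paper: reduce, via the deformation‑invariance of the polarisation type together with Proposition~\ref{prop:quasi SYZ} and the deformation statement of Theorem~\ref{thm:main result on lagr} (which packages exactly the connectedness argument you extract from Theorem~B), to a single Beauville--Mukai system, whose general fibre is a degree-$\delta$ Picard variety of a smooth curve and hence carries a principal theta polarisation. One remark on your ``main obstacle'': the worry that a higher-divisibility orbit ``would produce a non-principal type, as in the smooth case'' is unfounded --- for $\operatorname{K3}^{[n]}$-type the polarisation type is always $(1,\dots,1)$ (Wieneck), and Proposition~\ref{prop:quasi SYZ} sends \emph{every} primitive isotropic class, of any divisibility, to the fibration class of some Beauville--Mukai system $M_{(0,mH,ms)}(S,H)$, which is precisely your model $v=(0,mC,ma)$ with $a$ possibly nonzero and still has principally polarised fibres, so no orbit needs to be excluded.
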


Finally, the second corollary is about the Huybrechts--Riemann--Roch polynomial for symplectic varieties of type $\operatorname{K3}^{[k]}_m$. Recall that the HRR polynomial is a locally trivial deformation invariant numerical polynomial that allows to compute the Euler characteristic of a line bundle only in terms of its BBF square (see Section~\ref{section:HHRR}).

\begin{thmD}[Theorem~\ref{thm: RR of K3n type}]
Let $X$ be a symplectic variety of type $\operatorname{K3}^{[k]}_m$. Then the Huybrechts--Riemann--Roch polynomial of $X$ is of $\operatorname{K3}^{[n]}$-type, where $n = km^2+1$, i.e.
\[ \RR_X(t)=\binom{t/2 + n + 1}{n}. \]
\end{thmD}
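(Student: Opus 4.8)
The plan is to compute the locally trivial deformation invariant $\RR_X$ on a convenient lagrangian-fibred model and to recover the whole polynomial from the fibration. For $m=1$ the variety $X$ is the smooth Hilbert scheme of type $\operatorname{K3}^{[k]}$ and the formula is classical, so I assume the hypotheses of Theorem~A. By definition every variety of type $\operatorname{K3}^{[k]}_m$ is locally trivial deformation equivalent to some $M_v(S,H)$ with $v=mw$; deforming within the class so that an isotropic class becomes nef and of type $(1,1)$, Theorem~A supplies a lagrangian fibration, so $\fM_\ell^{\operatorname{lagr}}\neq\emptyset$, and Theorem~B shows this locus is dense. I may therefore take the model to be a lagrangian fibration $f\colon X\to\P^n$ with $\ell=c_1(f^*\sO_{\P^n}(1))$ nef and isotropic, carrying a relative polarisation $\Theta$ — for instance the relative compactified Jacobian $M_{mw}\to|mH|\cong\P^n$ attached to $w=(0,H,s)$ with $H^2=2(k-1)$. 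Here $n=\tfrac12\dim X$, which coincides with the integer in the statement because $\RR_X$ has degree $\tfrac12\dim X$ and $\binom{t/2+n+1}{n}$ has degree $n$; being of degree $n$, $\RR_X$ is determined by $n+1$ of its values.

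The value at $t=0$ is $\RR_X(0)=\chi(X,\sO_X)=n+1$: a symplectic variety of this type has $h^{j}(X,\sO_X)=1$ for even $j$ with $0\le j\le 2n$, generated by the powers of the holomorphic symplectic form, and $h^{j}(X,\sO_X)=0$ for odd $j$, so $\chi(X,\sO_X)=\sum_{j=0}^{2n}(-1)^j h^j(X,\sO_X)=n+1$. For the remaining values I invoke Theorem~C: the polarisation type of $f$ is $(1,\dots,1)$, so the general fibre is a principally polarised abelian $n$-fold. Hence for every $a\ge 1$ the bundle $a\Theta$ is relatively ample, $R^{>0}f_*(a\Theta)=0$, and $f_*(a\Theta)$ is locally free on $\P^n$ of rank $a^n$; the projection formula yields
\[ \chi\bigl(X,\,a\Theta+b\,\ell\bigr)=\chi\bigl(\P^n,\,f_*(a\Theta)\otimes\sO_{\P^n}(b)\bigr)\qquad(a\ge 1,\ b\in\Z). \]

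As $a$ and $b$ vary, $q_X(a\Theta+b\ell)=a^2 q_X(\Theta)+2ab\,q_X(\Theta,\ell)$ (using $q_X(\ell)=0$) takes at least $n+1$ distinct values, so these Euler characteristics determine $\RR_X$ completely. The lagrangian condition forces $T_f\cong f^*\Omega^1_{\P^n}$ on the smooth locus, so Grothendieck--Riemann--Roch gives $\ch f_*(a\Theta)=\td(\Omega^1_{\P^n})\cdot f_*\left(e^{a\,c_1(\Theta)}\right)$; together with the fibre integral $\int_X\Theta^n\ell^n=n!$ — the value $n!$ being exactly the principal normalisation of Theorem~C — this turns the right-hand side into the Euler-characteristic computation for a principally polarised abelian fibration over $\P^n$. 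Performing it, as in the smooth Beauville--Mukai case (genus-$n$ Jacobian fibres), produces $\binom{t/2+n+1}{n}$, and combined with $\RR_X(0)=n+1$ this identifies $\RR_X$.

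I expect the main difficulty to be twofold. First, the computation above must be justified on the singular variety $X$: the identification $T_f\cong f^*\Omega^1_{\P^n}$, the vanishing $R^{>0}f_*(a\Theta)=0$, the rank formula $\operatorname{rk}f_*(a\Theta)=a^n$, and the Grothendieck--Riemann--Roch formula are statements on the smooth fibration that must be transported across the singular locus. This is feasible because that locus has codimension at least two — its deepest stratum being the manifold $Y$ of type $\operatorname{K3}^{[k]}$ — so the relevant sheaves and Euler characteristics are controlled by their restriction to the smooth part, while deformation invariance lets me assume the relative polarisation $\Theta$ exists, e.g.\ by working with the relative compactified Jacobian. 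Second, and most importantly, one must check that the fibration computation really returns all $n+1$ values of the $\operatorname{K3}^{[n]}$ polynomial — that the twists $a\Theta+b\ell$ realise $n+1$ distinct values of $q_X$ and that the resulting Euler characteristics agree with $\binom{t/2+n+1}{n}$ — rather than only the single value $\chi(\sO_X)$ that the isotropic pullbacks $f^*\sO_{\P^n}(b)$ produce on their own. This matching is the crux on which the whole argument rests.
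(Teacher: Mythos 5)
Your overall strategy --- deform to a Beauville--Mukai system, twist a relative theta divisor by multiples of the fibration class, and push forward to $\P^n$ --- is exactly the paper's, and the reduction to that model plus the counting of values of $q_X$ is sound (in fact you only need $a=1$: the classes $\Theta+b\ell$ already realise $q=-2+2b$ for all $b$, so the rank-$a^n$ bundles and the general Grothendieck--Riemann--Roch computation are unnecessary). But there are two genuine gaps. First, you never pin down the pushforward $f_*\Theta$. Knowing that $\Theta$ restricts to a principal polarisation on the general fibre only gives $\operatorname{rk} f_*\Theta=1$, i.e.\ $f_*\Theta\cong\sO_{\P^n}(c)$ for some unknown $c$, and then your computation outputs $\binom{t/2+n+1+c}{n}$; the whole theorem is the statement that $c=0$. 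The paper kills $c$ by producing $\Theta$ as an explicit effective divisor $D_v=\{F\mid h^0(F)\geq1\}$, proving it is Cartier (this requires the local factoriality of $M_v(S,H)$ from Kaledin--Lehn--Sorger and Perego--Rapagnetta --- existence of $\Theta$ as an honest line bundle on the singular $X$ is itself not free), and showing $q_X(\Theta)=-2<0$, whence $h^0(X,\sO_X(D_v))=1$ and $f_*\sO_X(D_v)=\sO_{\P^n}$. Your proposed route to the missing normalisation, Grothendieck--Riemann--Roch with $T_f\cong f^*\Omega^1_{\P^n}$, is not available: $X$ is singular and $f$ has singular fibres, and the remark that the singular locus has codimension two does not rescue it --- Euler characteristics and direct images are global invariants and are not computed by restriction to a dense open subset.

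Second, the vanishing $R^{>0}f_*(\Theta+b\ell)=0$ over the \emph{entire} base, including the discriminant, does not follow from the general fibre being a principally polarised abelian variety; relative ampleness on the generic fibre says nothing about higher direct images supported over the singular fibres. The paper invokes a specific vanishing theorem (\cite[Theorem~3]{Rios2020}) for exactly this step. A smaller point: the value $\chi(X,\sO_X)=n+1$ via ``$h^{0,2j}=1$ generated by powers of the symplectic form'' requires a reflexive Hodge symmetry statement on the singular $X$ that you do not justify --- but this value is not needed once one has $\chi(X,\Theta+b\ell)=\binom{b+n}{n}$ for $n+1$ values of $b$.
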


\subsection*{Structure of the paper}
We collect in Section~\ref{section:preliminaries} the preliminary results needed in the rest of the paper. In particular, here the reader can find the main definitions and facts about the geometry of singular symplectic varieties. Section~\ref{section:LagrangianFibrations} contains basic facts about lagrangian fibrations. This includes some results extended from the smooth case. In Section~\ref{section: moduli spaces of lagrangian fibrations} we prove Theorem~B. In particular, the main result of the section is Theorem~\ref{thm:main result on lagr}, which is a corollary of Theorem~B. Section~\ref{section:moduli spaces of sheaves on K3} contains some basic and useful results about moduli spaces of sheaves on K3 surfaces. Finally, in Section~\ref{section:HRR for MSK3} we prove Theorem~D and in Section~\ref{section:SYZ for M(m,k)} we prove Theorems~A and C.

\subsection*{Acknowledgments}
We are very grateful to Christian Lehn and Emanuele Macrì for useful conversations at several stages of this work. We also thank Mirko Mauri for answering our questions about \cite[Appendix~A]{EFGMS:Boundedness} and Andreas H\"oring for bringing \cite{DHP:MMP4Complex} to our attention.

\subsection*{Funding}
Ángel David Ríos Ortiz was supported by the European Research Council (ERC) under the European Union’s Horizon 2020 research and innovation programme (ERC-2020-SyG-854361-HyperK). 

Claudio Onorati was partially supported by the European Union - NextGenerationEU under the National Recovery and Resilience Plan (PNRR) - Mission 4 Education and research - Component 2 From research to business - Investment 1.1 Notice Prin 2022 - DD N. 104 del 2/2/2022, from title "Symplectic varieties: their interplay with Fano manifolds and derived categories", proposal code 2022PEKYBJ – CUP J53D23003840006.
He is member of INDAM-GNSAGA.

\section{Preliminaries}\label{section:preliminaries}
In this paper we work with primitive and irreducible symplectic varieties. We recall in this first section the main definitions and facts and we state some generalizations.

\subsection{Primitive and irreducible symplectic varieties}\label{section:symplectic}
If $X$ is a reduced normal complex analytic variety, as usual we denote by $\Omega_X^{[p]}=\left(\wedge^p\Omega_X\right)^{**}$ 
the sheaf of reflexive $p$-forms. The varieties $X$ we are interested in have a pure Hodge structure of weight $2$ on $\oH^2(X,\C)$ and a real reflexive $1$-form $\omega\in\oH^{1,1}(X,\R)=\oH^1(X,\Omega^{[1]}_X)\cap\oH^2(X,\R)$ is called a \emph{K\"ahler form}. The set of K\"ahler classes in $\oH^{1,1}(X,\R)$ is a cone $\mathcal{K}_X$, called the K\"ahler cone (see \cite[Proposition~2.8]{BakkerLehn2022}). A reduced normal complex analytic variety with a K\"ahler form is called a \emph{K\"ahler space}. We refer to \cite{Varouchas} for generalities about K\"ahler spaces (see also \cite{BakkerLehn2022} for the special case of symplectic varieties).
\begin{defn}\label{defn:symplectic}
  Let $X$ be a compact K\"ahler space.
  \begin{enumerate}
	\item A \textit{symplectic form} on $X$ is a closed reflexive 2-form $\sigma$ on $X$ which is non-degenerate at each point of $X_{\operatorname{reg}}$.
	\item If $\sigma$ is a symplectic form on $X$, the pair $(X,\sigma)$ is a \textit{symplectic variety} if for every (K\"ahler) resolution $f\colon\widetilde{X}\to X$ of the singularities of $X$, the holomorphic symplectic form $\sigma_{\operatorname{reg}}:=\sigma_{|X_{\operatorname{reg}}}$ extends to a holomorphic 2-form on $\widetilde{X}$. 
        \item The symplectic variety $(X,\sigma)$ is a \textit{primitive symplectic variety} if $\oH^{1}(X,\mathcal{O}_{X})=0$ and $\oH^{0}(X,\Omega_{X}^{[2]})=\mathbb{C}\sigma$.
	\item The symplectic variety $(X,\sigma)$ is an \textit{irreducible symplectic variety} if for every finite quasi-étale morphism $f\colon Y\to X$ the exterior algebra of reflexive forms on $Y$ is spanned by $f^{[*]}\sigma$.
  \end{enumerate}
\end{defn}
Recall that if $X$ and $Y$ are two normal analytic varieties, a \emph{finite quasi-étale morphism} $f\colon Y\to X$ is a finite morphism that is étale in codimension $1$. 

Symplectic varieties always have rational singularities (\cite[Proposition~1.3]{Beauville2000}).
We also wish to point out that irreducible symplectic varieties are always primitive, but the converse is not always true. 

\subsection{Locally trivial families}\label{section:families}
Let us recall that a \textit{locally trivial family} is a proper morphism $f\colon\cX\to T$ of complex analytic varieties such that $T$ is connected and, for every point $x\in\cX$, there exist open neighborhoods $V_x\subset \cX$ and $V_{f(x)}\subset T$, and an open subset $U_x\subset f^{-1}(f(x))$ such that there is an isomorphism over $T$
$$V_x\cong U_x\times V_{f(x)}.$$

\begin{defn}\label{def:lt}
\begin{enumerate}
\item A \emph{locally trivial family of primitive (resp.\ irreducible) symplectic varieties} is a locally trivial family whose fibres are all primitive (resp.\ irreducible) symplectic. 

\item Two primitive symplectic varieties are said to be \emph{locally trivially deformation equivalent} if they are members of a locally trivial family of primitive symplectic varieties.
\end{enumerate}
\end{defn} 

By \cite[Corollary~4.11]{BakkerLehn2022} a small locally trivial deformation of a primitive symplectic variety is again primitive symplectic. The same holds for irreducible symplectic varieties, provided certain hypotheses are imposed on the topology of the smooth locus, on the type of singularities allowed, or on the projectivity of the fibers (see \cite[Section~1.2]{OPR}). We recall here the following version, which is relevant for our purposes.

\begin{prop}
    Let $X$ be a primitive symplectic variety.
    \begin{enumerate}
        \item (\cite[Lemma~5.20]{BakkerLehn2022}) If $X$ is $\Q$-factorial, then any small locally trivial deformation of $X$ is $\Q$-factorial.
        \item (\cite[Proposition~1.8]{OPR}) If $X$ is terminal and irreducible symplectic, then any small locally trivial deformation of $X$ is terminal and irreducible symplectic
    \end{enumerate}
\end{prop}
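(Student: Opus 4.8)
The common engine behind both parts is that a locally trivial family $f\colon\cX\to T$ is in particular a topological fibre bundle: the local product structure in the definition forces the analytic germ of $\cX_t$ at each singular point to be independent of $t$, and shows that the singular locus sweeps out a (locally trivial) subfamily. Hence over the connected base $T$ the local systems $R^kf_*\Q$, and their analogues on the regular loci of the fibres, are constant, and every singularity germ is constant. The plan is to reformulate each property so that it is visibly controlled by this fixed data together with the Hodge theory of primitive symplectic varieties (for which $\oH^2$ carries a pure weight-two Hodge structure with locally constant Hodge numbers).

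For (1) I would reduce $\Q$-factoriality to a purely topological invariant of the constant local system. Since $X$ is primitive symplectic, $\oH^1(X,\sO_X)=0$, so the exponential sequences on $X$ and on its smooth locus $X_{\mathrm{reg}}$ identify $\Pic(X)$, respectively $\operatorname{Cl}(X)=\Pic(X_{\mathrm{reg}})$, with the integral $(1,1)$-classes in $\oH^2(X,\Z)$, respectively $\oH^2(X_{\mathrm{reg}},\Z)$; here one uses that an integral $(1,1)$-class on the smooth variety $X_{\mathrm{reg}}$ automatically comes from a line bundle. The restriction $r\colon\oH^2(X,\Q)\to\oH^2(X_{\mathrm{reg}},\Q)$ is an injection of pure weight-two Hodge structures whose $(2,0)$-parts both equal $\C\sigma$, so $\operatorname{coker}(r)$ is pure of type $(1,1)$; geometrically it is spanned by the classes of exceptional divisors of a resolution, which are algebraic on every fibre. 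Consequently $X$ is $\Q$-factorial if and only if $\operatorname{coker}(r)=0$. In a locally trivial family (which is locally a product, so a resolution of one fibre spreads to a simultaneous resolution) both $R^2f_*\Q$ and its analogue on the regular locus are local systems and $r$ is a morphism between them, whence $\dim\operatorname{coker}(r)$ is constant on $T$. Thus $\Q$-factoriality is locally constant, giving \cite[Lemma~5.20]{BakkerLehn2022}.

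For (2), terminality is the easy half: it is a condition on the analytic germs of the singularities, and these are constant along $f$; concretely one resolves the total space simultaneously and observes that the discrepancies of the fibrewise resolution do not depend on $t$, so terminal stays terminal (and the singular locus keeps codimension $\ge 4$). That nearby fibres remain primitive symplectic is \cite[Corollary~4.11]{BakkerLehn2022}: the vanishing $\oH^1(\sO)=0$ and the equality $h^0(\Omega^{[2]})=1$ persist because the Hodge numbers of the constant local system $R^2f_*\Q$ do not jump, and the symplectic form extends across the simultaneous resolution. It then remains to upgrade \emph{primitive} to \emph{irreducible}, invoking the characterisation of irreducibility via a trivial topological fundamental group together with $h^{2,0}=1$ and a holonomy condition: as $f$ is a topological fibre bundle, $\pi_1(\cX_t)$ is constant and every finite quasi-étale cover of $X$ spreads to a locally trivial family of such covers, so the reflexive exterior algebra is transported to the neighbouring fibres. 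This is \cite[Proposition~1.8]{OPR}.

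The main obstacle is precisely this last clause. Unlike $\Q$-factoriality, terminality, or the Hodge-numerical vanishings, irreducibility is not manifestly a topological or numerical invariant: it constrains the reflexive forms on \emph{all} finite quasi-étale covers at once. The substantive work is to show that these covers and their reflexive differentials are governed by the constant fundamental group, and to reduce the defining condition to the deformation-stable data $\pi_1=1$ and $h^{2,0}=1$ by means of a decomposition theorem for singular symplectic varieties; it is this reduction, rather than any routine semicontinuity, where the real content lies.
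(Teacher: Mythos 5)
The paper does not actually prove this proposition: both parts are imported verbatim from \cite[Lemma~5.20]{BakkerLehn2022} and \cite[Proposition~1.8]{OPR}, so there is no in-paper argument to compare yours against. Measured against the proofs in those references, your outline is on the right track but has one concrete slip and one acknowledged hole. For (1), the reduction of $\Q$-factoriality to the vanishing of $\operatorname{coker}\bigl(r\colon\oH^2(X,\Q)\to\oH^2(X_{\mathrm{reg}},\Q)\bigr)$, together with the constancy of this cokernel because both sides form local systems over the base of a locally trivial family, is indeed the mechanism in Bakker--Lehn. But your justification that the cokernel ``is spanned by the classes of exceptional divisors of a resolution'' cannot be right as stated: exceptional divisors restrict to zero on $X_{\mathrm{reg}}$, so they contribute nothing to $\oH^2(X_{\mathrm{reg}},\Q)$. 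The correct picture is that $\Q$-factoriality is equivalent to $\oH^2(X,\Q)$ and the exceptional classes jointly spanning $\oH^2(\widetilde X,\Q)$, and $\operatorname{coker}(r)$ is the obstruction to that, using purity of $\oH^2(X_{\mathrm{reg}})$ in weight two. Relatedly, identifying $\operatorname{Cl}(X)_\Q$ with integral $(1,1)$-classes on the non-compact $X_{\mathrm{reg}}$ requires controlling the mixed Hodge structure and the map to $\oH^2(\sO_{X_{\mathrm{reg}}})$; this is where the actual work in \cite{BakkerLehn2022} sits and your sketch treats it as automatic.

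For (2), the easy halves (constancy of singularity germs gives terminality; \cite[Corollary~4.11]{BakkerLehn2022} gives primitivity) are fine, but the passage from primitive to irreducible is exactly the content of \cite[Proposition~1.8]{OPR} and you leave it at the level of a plan. The missing inputs are: (i) that in a locally trivial family the regular loci form a topological fibre bundle, so $\pi_1(X_{\mathrm{reg}})$ is constant, and hence (by purity of branch locus) finite quasi-\'etale covers spread out over the base; and (ii) a characterisation of irreducibility in deformation-stable terms, which in \cite{OPR} comes from the Bakker--Guenancia--Lehn/H\"oring--Peternell holonomy decomposition rather than from any semicontinuity of reflexive forms alone. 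You correctly identify this as the substantive step, but identifying a gap is not the same as closing it; as written the proposal does not prove part (2).
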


\begin{rem}
    All the symplectic varieties we will consider in this paper are $\Q$-factorial and terminal. By a result of Namikawa (see \cite[Main Theorem]{Namikawa}), any flat deformation of a $\Q$-factorial and terminal symplectic variety is locally trivial. 
\end{rem}

\subsection{The BBF quadratic form}\label{section:BBF}
Let $X$ be a primitive symplectic variety and let us consider the torsion free group $\oH^2(X,\Z)_{\operatorname{tf}}$. From now on, by abuse of notation, we will simply use the notation $\oH^2(X,\Z)$ for its torsion free part.
\begin{prop}[\protect{\cite[Corollary~3.5, Section~5.1, Lemma~5.7]{BakkerLehn2022}}]\label{prop:BBF}
If $X$ is a primitive symplectic variety, then $\oH^2(X,\C)$ is a pure weight-two Hodge structure. There exists a non-degenerate quadratic form $q_X$ on $\oH^2(X,\Z)$ of signature $(3,b_2(X)-3)$. Moreover, $q_X$ is invariant under locally trivial deformations of $X$.
\end{prop}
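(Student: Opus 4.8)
The plan is to build $q_X$ explicitly out of the symplectic form by the Beauville--Fujiki recipe, to check that it is a non-degenerate integral quadratic form, and then to read off the signature and the deformation invariance as consequences. Throughout set $\dim X=2n$ and normalise $\sigma$ so that $\int_X(\sigma\bar\sigma)^n=1$. The first task is the Hodge-theoretic input: the vanishing $\oH^1(X,\mathcal{O}_X)=0$ kills the $(1,0)$- and $(0,1)$-parts, while $\oH^0(X,\Omega_X^{[2]})=\C\sigma$ gives $h^{2,0}=1$, so what must be shown is that $\oH^2(X,\C)$ splits as $\oH^2(X,\mathcal{O}_X)\oplus\oH^1(X,\Omega_X^{[1]})\oplus\oH^0(X,\Omega_X^{[2]})$ with $h^{0,2}=1$, i.e.\ that the weight-two Hodge structure is pure and computed by reflexive forms. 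For a compact K\"ahler space with rational singularities---which symplectic varieties have by \cite{Beauville2000}---this is exactly the Hodge theory of reflexive differentials, and I would invoke it rather than reprove it; this is the first point where genuinely singularity-specific input is needed.

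With purity in hand, define
\[ q_X(\alpha)=\frac{n}{2}\int_X\alpha^2(\sigma\bar\sigma)^{n-1}+(1-n)\Big(\int_X\alpha\,\sigma^{n-1}\bar\sigma^{n}\Big)\Big(\int_X\alpha\,\sigma^{n}\bar\sigma^{n-1}\Big), \]
a quadratic form on $\oH^2(X,\C)$ restricting to a real form on $\oH^2(X,\R)$. The structural heart of the statement is the Fujiki relation $\int_X\alpha^{2n}=c_X\,q_X(\alpha)^n$ for some constant $c_X>0$. I would prove it by reduction to the smooth case: both sides are polynomial in $\alpha$ and invariant along a locally trivial family, so one may deform $X$ within its family and compare $2n$-fold cup products on a resolution of a convenient (e.g.\ projective) member, where the classical Beauville--Fujiki computation applies. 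Integrality is then formal: since $\alpha\mapsto\int_X\alpha^{2n}$ is $\Z$-valued on $\oH^2(X,\Z)$ and equals $c_X\,q_X(\alpha)^n$, the form $q_X$ is a real multiple of a rational form, and rescaling to its primitive integral representative yields a $\Z$-valued non-degenerate form.

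The signature $(3,b_2(X)-3)$ follows from the Hodge--Riemann bilinear relations, available because $X$ is compact K\"ahler with rational singularities: the real span of $\operatorname{Re}\sigma$ and $\operatorname{Im}\sigma$ together with a K\"ahler class $\omega\in\mathcal{K}_X$ contributes three positive directions, while $q_X$ is negative definite on the primitive $(1,1)$-classes orthogonal to $\omega$ inside $\oH^{1,1}(X,\R)$. For deformation invariance I would use that along a locally trivial family the groups $\oH^2(X_t,\Z)$ form a local system with locally constant cup-product pairing; hence the $2n$-fold intersection form is monodromy-invariant, and since $q_X$ is pinned down up to finite ambiguity by this intersection form and then fixed completely by the integral normalisation, it is preserved throughout the family. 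The main obstacle is precisely the pair of singular-variety inputs flagged above---the purity of the weight-two Hodge structure and the Fujiki relation---since these are where the classical arguments use smoothness, and in the present setting they must be supplied by the Hodge theory of symplectic singularities rather than by any direct calculation.
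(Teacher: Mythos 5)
The paper does not prove this proposition: it is quoted verbatim from Bakker--Lehn (Corollary~3.5, Section~5.1, Lemma~5.7 of \cite{BakkerLehn2022}), so the only fair comparison is with the argument in that reference. Your outline follows essentially the same route as theirs (and as Namikawa/Kirschner/Schwald before them): purity of the weight-two Hodge structure from rational singularities and the injection $\oH^2(X)\hookrightarrow\oH^2(\widetilde X)$ onto a sub-Hodge structure, the Beauville formula for $q_X$, Hodge--Riemann in degree $(1,1)$ for the signature, rationality/integrality via the proportionality $\int_X\alpha^{2n}=c\,q_X(\alpha)^n$ together with irreducibility of a non-degenerate quadratic form and unique factorisation, and deformation invariance from the local system plus the primitive integral normalisation. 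All of that is sound.

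The one step that would fail as written is your proof of the Fujiki relation ``by reduction to the smooth case \dots on a resolution of a convenient projective member, where the classical Beauville--Fujiki computation applies.'' A resolution of a singular symplectic variety is in general not a holomorphic symplectic manifold (for the $\Q$-factorial terminal varieties in this paper, e.g.\ $M_{mw}(S,H)$ with $m\ge 2$, there is no crepant resolution at all), so the classical computation simply does not apply on $\widetilde X$; and even when a symplectic resolution exists, $\pi^*\colon\oH^2(X,\C)\to\oH^2(\widetilde X,\C)$ is not surjective and the BBF form of $\widetilde X$ does not restrict to the one you defined on $X$ without further argument. There is also a circularity: you invoke invariance of both sides ``along a locally trivial family'' before deformation invariance of $q_X$ has been established. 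The correct argument (Fujiki's, carried out in the singular setting by Schwald and in \cite[Proposition~5.15]{BakkerLehn2022}) is intrinsic: using local Torelli and the Hodge decomposition one shows that the degree-$2n$ polynomial $\alpha\mapsto\int_X\alpha^{2n}$ vanishes to order $n$ along the quadric $q_X=0$ in $\oH^2(X,\C)$, hence is divisible by $q_X^n$, and a degree count gives the constant. With that substitution the rest of your argument goes through.
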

The quadratic form $q_X$ is called the \emph{Beauville--Bogomolov--Fujiki form} (BBF form, for short). With an abuse of notation, we will systematically confuse the quadratic form $q_X$ with the associated bilinear form. The pair $(\oH^2(X,\Z),q_X)$ is then a lattice, called the \emph{Beauville--Bogomolov--Fujiki lattice} (BBF lattice, for short). Notice that if $X_1$ and $X_2$ are two locally trivially deformation equivalent primitive symplectic varieties, then $(\oH^2(X_1,\Z),q_{X_1})$ and $(\oH^2(X_2,\Z),q_{X_2})$ are isometric as lattices.

We also point out that if $X$ is an irreducible symplectic variety, then by \cite[Corollary~13.3]{GGK:Klt} it is simply connected and therefore the cohomology group $\oH^2(X,\Z)$ is already torsion free. Finally, we want to remark that the BBF form $q_X$ gives a natural isomorphism 
\begin{equation}\label{eqn:iso}
\oH^2(X,\Q)\cong\oH_2(X,\Q). 
\end{equation}
This can be used to naturally see the space of curves in $\oH_2(X,\Z)$ inside $\oH^2(X,\Q)$ (see also \cite[Definition~2.6]{LMP}). 

If $D\in\oH^2(X,\Z)$ is the class of a divisor, then we denote by $D^\vee\in\oH_2(X,\Z)$ the class such that $q_X(D,\alpha)=D^\vee.\alpha$, for every $\alpha\in\oH^2(X,\Z)$. In particular, if $D$ is primitive and $\delta=\divv(D)$ is the divisibility of $D$, i.e.\ $\delta$ is the positive generator of the ideal $q_X(D,\oH^2(X,\Z))\subset\Z$, then $D^\vee=D/\delta$.

Finally, let us recall the following fact.
\begin{prop}[\protect{\cite[Theorem~1]{Schwald2020}}]\label{prop:Fujiki}
    Let $X$ be a primitive symplectic variety of dimension $2n$. There exists a positive constant $C\in\Q_{>0}$, depending only on the locally trivial deformation class of $X$, such that for every $\alpha\in\oH^2(X,\C)$ we have
    \[ \int_X \alpha^{2n}=C q_X(\alpha)^n. \]
\end{prop}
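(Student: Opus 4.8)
The plan is to regard the left-hand side as a polynomial in $\alpha$ and to force it to be proportional to $q_X^n$ by a vanishing argument along the quadric cut out by $q_X$. Set $V=\oH^2(X,\C)$ and define $F\colon V\to\C$ by $F(\alpha)=\int_X\alpha^{2n}$; since cup product and pairing against the fundamental class are multilinear, $F$ is a homogeneous polynomial of degree $2n$ on $V$. The statement then reduces to the algebraic claim that $F=C\,q_X^n$ in $\C[V]$, together with the identification of $C$. Because $q_X$ is non-degenerate of rank $b_2(X)\geq 3$, it is an irreducible quadratic form, so the quadric cone $Q=\{q_X=0\}\subset V$ is a reduced irreducible hypersurface; as $\C[V]$ is a UFD, the ideal of polynomials vanishing to order $\geq n$ along $Q$ is exactly $(q_X^n)$. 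Hence it suffices to prove that $F$ vanishes to order at least $n$ along $Q$: then $q_X^n\mid F$, and a comparison of degrees ($\deg F=2n=\deg q_X^n$) yields $F=C\,q_X^n$ for some $C\in\C$.

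To establish the order-$n$ vanishing I would first treat a single period. Let $\sigma$ span $\oH^{2,0}(X)=\C\sigma$; it is isotropic, $q_X(\sigma)=0$, so $[\sigma]\in Q$. The $k$-th order partial derivatives of $F$ at $\sigma$ are multiples of the multilinear expressions $\int_X\sigma^{2n-k}\cup v_1\cup\cdots\cup v_k$ with $v_i\in V$. Since $[\sigma]$ has Hodge type $(2,0)$, the cup-product power $[\sigma]^{m}$ has type $(2m,0)$ and hence vanishes for $m>n$ (equivalently $\sigma^{n+1}=0$, as $\sigma$ is non-degenerate on the $2n$-dimensional regular locus). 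Thus the factor $\sigma^{2n-k}$ vanishes whenever $2n-k>n$, i.e.\ whenever $k<n$, so every partial derivative of $F$ of order $<n$ vanishes at $[\sigma]$ and $F$ vanishes to order $\geq n$ there. The order is in fact exactly $n$, since the order-$n$ derivative in the $\bar\sigma$-directions is proportional to $\int_X\sigma^n\bar\sigma^n\neq 0$; this already shows $C\neq0$.

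To spread the vanishing over all of $Q$ I would invoke deformation theory. Both $q_X$ (by \cref{prop:BBF}) and the cup-product polynomial $F$ (by topological invariance of cohomology in a locally trivial family) are constant along any connected component $\fM^0_\Lambda$, while by the local Torelli theorem for primitive symplectic varieties (\cite{BakkerLehn2022}) the period map is a local isomorphism onto an open subset of the period domain $\{[\sigma]\in\P(V)\mid q_X(\sigma)=0,\ q_X(\sigma,\bar\sigma)>0\}\subset Q$. Hence the period points fill a Euclidean-open, and therefore Zariski-dense, subset of the irreducible quadric $Q$. Since the locus where $F$ vanishes to order $\geq n$ is Zariski-closed and contains this dense set, $F$ vanishes to order $\geq n$ along all of $Q$, which completes the divisibility argument and gives $F=C\,q_X^n$.

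It remains to pin down $C$. Evaluating on a Kähler class $\omega\in\oH^{1,1}(X,\R)$ gives $\int_X\omega^{2n}=\operatorname{vol}(X)>0$ together with $q_X(\omega)>0$ (Kähler classes lie in the positive cone, by the signature $(3,b_2-3)$), so $C=\int_X\omega^{2n}/q_X(\omega)^n>0$; taking instead a rational class $\alpha\in\oH^2(X,\Q)$ with $q_X(\alpha)\neq0$, both $\int_X\alpha^{2n}$ and $q_X(\alpha)$ are rational, whence $C\in\Q_{>0}$, and $C$ depends only on the locally trivial deformation class since $F$ and $q_X$ do. The main obstacle is precisely the transport step: the pointwise Hodge-theoretic vanishing is elementary, but everything hinges on knowing that periods fill a Zariski-dense subset of $Q$, i.e.\ on local Torelli combined with the deformation-invariance of $F$. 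Without density on $Q$ one controls $F$ only at isolated points and cannot conclude divisibility by $q_X^n$.
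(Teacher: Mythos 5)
The paper does not prove this proposition; it simply cites \cite[Theorem~1]{Schwald2020} (with the remark that the projectivity hypothesis there can be removed via \cite[Proposition~5.15]{BakkerLehn2022}). Your argument is a correct reconstruction of the standard Fujiki-relation proof, and it is essentially the argument used in those references: polynomiality of $F(\alpha)=\int_X\alpha^{2n}$, order-$n$ vanishing at period points, propagation to a Zariski-dense subset of the quadric via local Torelli and the deformation-invariance of both $F$ and $q_X$ in locally trivial families, and then divisibility $q_X^n\mid F$ in the UFD $\C[V]$.

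One small point deserves more care in the singular setting: the vanishing $\sigma^{n+1}=0$ in $\oH^{2n+2}(X,\C)$ is not quite ``because $\sigma$ is non-degenerate on $X_{\operatorname{reg}}$''. The clean argument is to pass to a K\"ahler resolution $\pi\colon\widetilde X\to X$: since $\pi_*[\widetilde X]=[X]$ one has $\int_X\beta=\int_{\widetilde X}\pi^*\beta$, and by the definition of a symplectic variety $\pi^*\sigma$ is represented by a holomorphic $2$-form $\widetilde\sigma$ on $\widetilde X$, so $\widetilde\sigma^{\,n+1}$ is a holomorphic $(2n+2)$-form on a $2n$-dimensional manifold and hence zero. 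This gives $\int_X\sigma^{2n-k}\cup v_1\cup\cdots\cup v_k=0$ for $k<n$, which is all your derivative computation needs. With that adjustment the proof is complete.
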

The constant $C$ is called the \emph{Fujiki constant}.
\begin{rem}
    Notice that in \cite{Schwald2020} the author calls irreducible symplectic varieties what is now customary to call primitive symplectic varieties, and vice versa. Moreover, they assume that the variety is projective, but as it is remarked in \cite[Section~5.14]{BakkerLehn2022} the argument follows in the non-projective case as well. See also \cite[Proposition~5.15]{BakkerLehn2022} for a more general statement.
\end{rem}

\subsection{Infinitesimal Torelli theorems}\label{section:inf Torelli}
Let $X$ be a primitive symplectic variety, $\Deflt(X)$ the Kuranishi space of locally trivial infinitesimal deformations of $X$ and 
\[ \Omega(X)=\left\{ z\in\mathbb{P}\oH^2(X,\Z)\mid q_X(z)=0,\; q_X(z,\bar{z})>0 \right\} \]
the period domain.

\begin{prop}[\protect{\cite[Lemma 4.6, Theorem 4.7, Proposition 5.5]{BakkerLehn2022}}]\label{prop:local Torelli}
    With notation as above, we have:
    \begin{enumerate}
        \item the Kuranishi space $\Deflt(X)$ is universal and smooth with tangent space isomorphic to $\oH^1(T_X)\cong\oH^1(X,\Omega_X^{[1]})$;
        \item (Local Torelli Theorem) if $\mathcal{X}\to\Deflt(X)$ is the universal family, then the period map
        \[ \Deflt(X)\longrightarrow\Omega(X),\quad t\mapsto [\oH^{2,0}(\mathcal{X}_t)] \]
        is a local isomorphism.
    \end{enumerate}
\end{prop}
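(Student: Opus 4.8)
The plan is to treat the two assertions separately, deriving both from the isomorphism $T_X\xrightarrow{\sim}\Omega_X^{[1]}$ induced by contraction with the symplectic form. First I would set up the deformation theory: locally trivial deformations of $X$ are governed by the tangent sheaf $T_X=\mathcal{H}om(\Omega_X^{[1]},\sO_X)$, with infinitesimal deformations classified by $\oH^1(X,T_X)$ and obstructions living in $\oH^2(X,T_X)$. Since $\sigma$ is non-degenerate on $X_{\operatorname{reg}}$, the contraction $v\mapsto \iota_v\sigma$ is an isomorphism of reflexive sheaves $T_X\cong\Omega_X^{[1]}$ (both sheaves are reflexive, so an isomorphism in codimension $1$ extends), yielding at once the stated identification $\oH^1(T_X)\cong\oH^1(X,\Omega_X^{[1]})$ of the tangent space. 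The same isomorphism gives $\oH^0(T_X)\cong\oH^0(\Omega_X^{[1]})=\oH^{1,0}$, which vanishes because $\oH^1(X,\sO_X)=0$ for a primitive symplectic variety; this absence of infinitesimal automorphisms is what upgrades the versality of the Kuranishi family to universality.

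For smoothness I would invoke the Bogomolov--Tian--Todorov principle in its singular, locally trivial incarnation. The point is that the differential graded Lie algebra of polyvector fields $\bigoplus_{p,q}\oH^p(X,\wedge^q T_X)$ controlling these deformations is carried, by contraction with $\sigma$, onto the complex of reflexive differentials, and the Tian--Todorov identity shows the Schouten bracket is sent to the de Rham differential. The vanishing of obstructions then reduces, via the $T^1$-lifting criterion of Ran and Kawamata, to the degeneration of the Fr\"olicher (Hodge--de Rham) spectral sequence for the complex $\Omega_X^{[\bullet]}$ of reflexive forms on the singular space $X$. This degeneration is the substantive input, and it is precisely where the hypotheses of the setting enter (symplectic, hence canonical and rational, singularities, together with the Hodge-theoretic package available for such spaces); granting it, the order-by-order lifting of deformations succeeds and $\Deflt(X)$ is smooth of dimension $h^{1,1}=\dim\oH^1(\Omega_X^{[1]})$.

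For the local Torelli statement I would compute the differential of the period map $t\mapsto[\oH^{2,0}(\mathcal{X}_t)]$ at the central fibre. Standard deformation theory identifies this differential with the cup/contraction map $\oH^1(T_X)\to\Hom\!\big(\oH^{2,0}(X),\oH^{1,1}(X)\big)$ sending $\xi$ to the assignment $\sigma\mapsto \xi\lrcorner\,\sigma$. As $\oH^{2,0}(X)=\C\sigma$ is one-dimensional, this is exactly the isomorphism $\oH^1(T_X)\xrightarrow{\sim}\oH^1(\Omega_X^{[1]})=\oH^{1,1}(X)$ from the first paragraph, while the target is canonically the tangent space $\Hom(\C\sigma,\sigma^\perp/\C\sigma)\cong\oH^{1,1}(X)$ to the quadric $\Omega(X)$ at $[\sigma]$ (using $q_X(\sigma,\sigma)=0$ and $q_X(\oH^{2,0},\oH^{1,1})=0$). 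Since source and target have equal dimension $b_2(X)-2$, the differential is an isomorphism and the period map is a local isomorphism. The main obstacle throughout is the degeneration of the reflexive Hodge--de Rham spectral sequence underlying unobstructedness; once that and the isomorphism $T_X\cong\Omega_X^{[1]}$ are in hand, both Torelli-type computations are formal.
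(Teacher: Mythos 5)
The paper contains no proof of this proposition: it is imported verbatim from Bakker--Lehn, with the three parts cited as \cite[Lemma 4.6, Theorem 4.7, Proposition 5.5]{BakkerLehn2022}. So the comparison is against the proof in that reference. Two of your three ingredients are sound and do match the cited arguments in outline: the contraction isomorphism $T_X\cong\Omega_X^{[1]}$ (correct, since $X$ is normal, both sheaves are reflexive, and $\sigma$ is non-degenerate on $X_{\operatorname{reg}}$), hence the identification of the tangent space; universality from $\oH^0(T_X)\cong\oH^0(\Omega_X^{[1]})=0$, which does follow from $\oH^1(X,\sO_X)=0$ via the Hodge theory of rational singularities; and the local Torelli computation, where the differential of the period map is the contraction with $\sigma$, the target is $\Hom(\C\sigma,\sigma^{\perp}/\C\sigma)\cong\oH^{1,1}(X)$, and equality of dimensions $b_2(X)-2$ closes the argument. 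This last step does, however, silently use that $\oH^2$ of the locally trivial Kuranishi family underlies a variation of pure Hodge structure and that the Kodaira--Spencer class computes the derivative of the Hodge filtration -- facts that in the singular setting are themselves part of what \cite{BakkerLehn2022} establishes.

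The genuine gap is in the smoothness step, which is precisely the hard content of \cite[Theorem 4.7]{BakkerLehn2022}. You reduce unobstructedness, via a Tian--Todorov identity and $T^1$-lifting, to ``the degeneration of the Fr\"olicher spectral sequence for the complex $\Omega_X^{[\bullet]}$'' and then grant that degeneration. But for a singular symplectic variety the reflexive de Rham complex need not compute $\oH^*(X,\C)$, and $E_1$-degeneration of its Hodge--de Rham spectral sequence is not available in this generality -- it is not a known input one can cite, so the reduction discharges the theorem onto a statement that is, if anything, harder than what is being proved. Bakker--Lehn's actual argument does not run through full reflexive Fr\"olicher degeneration: it works with locally trivial deformations controlled through the regular locus and uses only the low-degree Hodge-theoretic package available for rational singularities (via the extension theorems for reflexive forms), in a more delicate argument. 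A secondary slip in the same paragraph: $\bigoplus_{p,q}\oH^p(X,\wedge^qT_X)$ is the cohomology of the controlling object, not itself the differential graded Lie algebra, and the Tian--Todorov identity must be formulated at the cochain level on the singular space, which is again nontrivial here. As written, then, the proposal establishes the two formal parts but assumes the substantive one.
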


Suppose now that $L\in\Pic(X)$ is a line bundle and $\Deflt(X,L)$ is the Kuranishi space of infinitesimal deformations of the pair $(X,L)$.

\begin{prop}[\protect{\cite[Lemma 4.13, Corollary 5.9]{BakkerLehn2022}}]
    With notations as above, we have:
    \begin{enumerate}
        \item the Kuranishi space $\Deflt(X,L)$ is universal and smooth, and the forgetful morphism $\Deflt(X,L)\to\Deflt(X)$ is a closed embedding. The tangent space of $\Deflt(X,L)$ is isomorphic to
        \[ \operatorname{ker}\left( \oH^1(T_X)\stackrel{\cup c_1(L)}{\longrightarrow}\oH^2(\sO_X)\right); \]
        \item the image of $\Deflt(X,L)$ via the period map is identified with the space 
        \[ \Omega(X,L)=\mathbb{P}(c_1(L)^{\perp_{q_X}})\cap\Omega(X). \]
    \end{enumerate}
\end{prop}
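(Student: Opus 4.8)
The plan is to govern the locally trivial deformations of the pair $(X,L)$ by the Atiyah extension of $L$, and then to transport smoothness and the period description from $\Deflt(X)$ via the local Torelli theorem of \cref{prop:local Torelli}. First I would construct, reflexively on $X$, the Atiyah sequence
\[ 0\to\sO_X\to\mathcal{A}(L)\to T_X\to0, \]
where $\mathcal{A}(L)$ is the Atiyah algebroid of $L$ (first-order differential operators on $L$ with scalar symbol) and whose extension class is the Atiyah class of $L$, identified with $c_1(L)\in\oH^1(\Omega_X^{[1]})=\oH^{1,1}(X)$. First-order locally trivial deformations of $(X,L)$ are then classified by $\oH^1(\mathcal{A}(L))$, and the connecting homomorphism of the associated long exact sequence is cup product with $c_1(L)$. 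Since $X$ is primitive symplectic we have $\oH^1(\sO_X)=0$, so the sequence yields
\[ 0\to\oH^1(\mathcal{A}(L))\to\oH^1(T_X)\xrightarrow{\;\cup\,c_1(L)\;}\oH^2(\sO_X), \]
identifying the tangent space of $\Deflt(X,L)$ with $\ker(\cup\,c_1(L))$, as in~(1).

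Next I would show that the forgetful morphism $\Deflt(X,L)\to\Deflt(X)$ is a closed embedding. The vanishing $\oH^1(\sO_X)=0$ forces $\Pic^0$ to be discrete, so along any locally trivial deformation of $X$ the bundle $L$ admits at most one extension; hence the deformation functor of the pair is a subfunctor of that of $X$, the forgetful map is injective on points and on tangent vectors, and together with the smoothness of $\Deflt(X)$ one obtains a closed embedding onto the sublocus over which $L$ extends. Universality of $\Deflt(X,L)$ then follows from the universality of $\Deflt(X)$, the openness of versality, and the tangent space computation above.

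To pin down this sublocus and prove~(2), I would use the exponential sequence on the fibres of the universal family $\cX\to\Deflt(X)$: the class $c_1(L)$ lifts to a line bundle on $\cX_t$ exactly when its image in $\oH^2(\cX_t,\sO)$ vanishes, i.e.\ when $c_1(L)$ remains of type $(1,1)$. As $\oH^{2,0}(\cX_t)$ is spanned by the period $\sigma_t$, this is the single linear condition $q_X(c_1(L),\sigma_t)=0$. Under the local isomorphism $\Deflt(X)\to\Omega(X)$ of \cref{prop:local Torelli} this says precisely that the period lies in $\P(c_1(L)^{\perp_{q_X}})$, so the period map identifies $\Deflt(X,L)$ with a neighbourhood in $\Omega(X,L)=\P(c_1(L)^{\perp_{q_X}})\cap\Omega(X)$, giving~(2). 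This simultaneously delivers the smoothness asserted in~(1): $\Omega(X,L)$ is a hyperplane section of the smooth quadric $\Omega(X)$, and its only possible singular point is $[c_1(L)]$, occurring when $c_1(L)$ is isotropic; but then $q_X(c_1(L),\overline{c_1(L)})=q_X(c_1(L))=0$, so $[c_1(L)]\notin\Omega(X)$ and $\Omega(X,L)$ is smooth. The resulting codimension one matches the corank of $\cup\,c_1(L)$, confirming the tangent computation.

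The steps that genuinely require the singular theory, rather than the classical Kuranishi argument for polarised manifolds, are two. First, the construction of the Atiyah sequence with reflexive sheaves on the singular space $X$ and the verification that $\oH^1(\mathcal{A}(L))$ computes first-order locally trivial deformations of the pair with connecting map $\cup\,c_1(L)$. Second, the variational input that $L$ extends over a locally trivial deformation exactly when its class stays of type $(1,1)$, which relies on the Hodge decomposition and the exponential sequence behaving well in locally trivial families of primitive symplectic varieties. Both are supplied by the results of \cite{BakkerLehn2022} quoted above; granting them, the remainder is the standard hyperplane-section-of-the-period-domain argument, and I expect the second point to be the main obstacle.
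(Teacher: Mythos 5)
The paper does not prove this proposition at all: it is imported verbatim from \cite[Lemma~4.13, Corollary~5.9]{BakkerLehn2022}, so the only meaningful comparison is with the cited source, whose strategy your reconstruction essentially follows. Your treatment of part (2) and of smoothness is correct and is the standard argument: uniqueness of extensions of $L$ from $\oH^1(\sO_X)=0$, the exponential sequence identifying the locus where $c_1(L)$ stays of type $(1,1)$, the single holomorphic equation $q_X(\sigma_t,c_1(L))=0$ (pairing a real class with $\sigma_t$ detects exactly its $(0,2)$-component, since $q_X(\sigma_t,\cdot)$ kills $\oH^{2,0}\oplus\oH^{1,1}$), and the observation that the hyperplane section of the period quadric can only be singular at $[c_1(L)]$, which is excluded from $\Omega(X)$ because $q_X(c_1(L),\overline{c_1(L)})=q_X(c_1(L))$ for a real class.

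The one step you cannot take for granted is the opening one: that $\oH^1(\mathcal{A}(L))$ classifies first-order \emph{locally trivial} deformations of the pair on a singular $X$, with connecting map $\cup\,c_1(L)$. On a singular space, the existence and exactness of a reflexive Atiyah sequence and, more seriously, the identification of $\oH^1$ of the algebroid with locally trivial (rather than arbitrary) deformations of the pair, is precisely the delicate content of the lemma you are trying to prove; saying these facts are ``supplied by the results of \cite{BakkerLehn2022}'' is circular in a self-contained argument. Fortunately the step is dispensable. Once you know $L$ extends (uniquely) over the Hodge locus $\{t : q_X(\sigma_t,c_1(L))=0\}$ --- which your exponential-sequence argument, run in the universal family with semicontinuity, provides --- you get a map from the Hodge locus to $\Deflt(X,L)$ inverse to the forgetful map, so the forgetful map is a closed embedding onto a smooth hypersurface, and the tangent space assertion in (1) then \emph{follows} from (2) via local Torelli: contraction with $\sigma$ identifies $\oH^1(T_X)\cong\oH^1(X,\Omega_X^{[1]})$, under which $\cup\,c_1(L)$ becomes $q_X(-,c_1(L))$ with values in $\oH^2(\sO_X)\cong\C$, surjective for $c_1(L)\neq0$ because $(\oH^{1,1})^{\perp_{q_X}}=\oH^{2,0}\oplus\oH^{0,2}$ by the Hodge--Riemann relations, so the corank-one kernel matches your hyperplane. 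Reordering your argument this way --- Hodge locus first, tangent space as a corollary --- removes the only genuine gap.
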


As a consequence, we get that up to shrink $\Deflt(X)$ if necessary, there exists a line bundle $\mathcal{L}$ on $\mathcal{X}_L:=\mathcal{X}\times_{\Deflt(X)}\Deflt(X,L)$ such that $(\mathcal{X}_L,\mathcal{L})$
is a universal family of $\Deflt(X,L)$.

\subsection{Locally trivial monodromy group}\label{section:Mon}
Let $\pi\colon\mathcal{X}\to B$ be a locally trivial family of primitive symplectic varieties. For any $b\in B$, the lattices $\oH^2(\mathcal{X}_b,\Z)$ fit together to form a local system $R^2\pi_*\Z$, which comes with the Gauss--Manin connection. Therefore if $\gamma\colon [0,1]\to B$ is any path starting from a point $b_1$ and ending to a point $b_2$, then there is an isometry 
\[ \mathsf{P}_\gamma\colon\oH^2(\mathcal{X}_{b_1},\Z)\longrightarrow\oH^2(\mathcal{X}_{b_2},\Z) \]
obtained by parallel transport.

\begin{defn}
    Let $X$, $X_1$ and $X_2$ be primitive symplectic varieties that are locally trivial deformation equivalent.
    \begin{enumerate}
        \item An isometry $g\colon \oH^2(X_1,\Z)\to\oH^2(X_2,\Z)$ is a \emph{locally trivial parallel transport operator} if there exist a locally trivial family $\pi\colon\mathcal{X}\to B$ and a path $\gamma\colon [0,1]\to B$ with $\mathcal{X}_{\gamma(0)}=X_1$ and $\mathcal{X}_{\gamma(1)}=X_0$, and such that $g=\mathsf{P}_\gamma$.
        \item An isometry $g\in\Or(\oH^2(X,\Z))$ is a \emph{locally trivial monodromy operator} if it is a locally trivial parallel transport operator from $X$ to itself.
        \item The \emph{monodromy group} $\Mon(X)$ is the group of locally trivial monodromy operators on $X$.
    \end{enumerate}
\end{defn}

\subsection{The marked moduli space}\label{section:marked moduli space}
Let $\Lambda$ be a lattice of signature $(3,n)$. The \emph{period domain} of $\Lambda$ is the domain
\begin{equation}\label{eqn:period domain} 
\Omega_\Lambda=\{ p\in\P(\Lambda\otimes_\Z\C)\mid (p,p)=0,\; (p,\bar{p})>0\} 
\end{equation}

We denote by $\fM_\Lambda$ the moduli space of \emph{marked} primitive symplectic varieties locally trivial deformation of $X$, i.e.\ $(X',\eta')\in\fM_\Lambda$ if and only if $X'$ is locally trivial deformation equivalent to $X$ and $\eta'\colon\oH^2((X',\Z)_{\operatorname{tf}}\to\Lambda$ is an isometry. The space $\fM_\Lambda$ exists as a non-Hausdorff complex manifold of dimension $\operatorname{rk}(\Lambda)-2$ and it is constructed by gluing together the Kuranishi spaces $\Deflt(X)$ using the markings.

The \emph{period map} is
\[ \cP\colon\fM_\Lambda\longrightarrow\Omega_\Lambda,\qquad (X',\eta')\mapsto [\eta'(\sigma_X)]. \]
By the Local Torelli Theorem (see Proposition \ref{prop:local Torelli}), $\cP$ is a local isomorphism.

In the following we denote by $\overline{\fM}_\Lambda$ the Hausdorff reduction of $\fM_\Lambda$ and by $\overline{\cP}$ the induced period map.

\begin{prop}[Global Torelli Theorem, \protect{\cite[Theorem 1.1]{BakkerLehn2022}}]\label{thm:global Torelli}
    Assume that $\operatorname{rk}(\Lambda)\geq5$ and let $\fM^0_\Lambda$ be a connected component of $\fM_\Lambda$. Then 
    \begin{enumerate}
        \item $\cP\colon\fM^0_\Lambda\to\Omega_L$ is bijective over Mumford--Tate general points;
        \item $\overline{\cP}|_{\overline{\fM^0}_\Lambda}$ is an isomorphism onto its image, which is contained in the complement of countably many maximal Picard rank periods;
        \item if there exists $(X',\eta')\in\fM^0_\Lambda$ such that $X'$ is $\Q$-factorial and terminal, then $\overline{\cP}$ is surjective.
    \end{enumerate}
\end{prop}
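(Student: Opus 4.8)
The plan is to adapt Verbitsky's proof of the Global Torelli theorem for irreducible holomorphic symplectic manifolds (in the form refined by Huybrechts and Markman) to the singular setting, taking as the essential local input the Local Torelli Theorem (Proposition~\ref{prop:local Torelli}), which says that $\cP$ is étale on each connected component. The three assertions concern, respectively, the generic fibres of $\cP$, the geometry of $\cP$ after collapsing inseparable points, and the full image of the period map. I would organise the argument around two mechanisms: \emph{twistor (hyperkähler rotation) families}, which produce many rational curves inside the image of $\cP$ and drive surjectivity, and an \emph{inseparability analysis}, which controls the fibres and forces injectivity on the Hausdorff reduction.

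First I would establish surjectivity. Given a marked pair $(X,\eta)\in\fM^0_\Lambda$ with symplectic form $\sigma$ and a Kähler class $\omega\in\mathcal{K}_X$, the real span $W=\langle\operatorname{Re}\sigma,\operatorname{Im}\sigma,\omega\rangle$ is a positive-definite $3$-plane for $q_X$ (Proposition~\ref{prop:BBF}), and the associated twistor family deforms $X$ within $\fM^0_\Lambda$ so that its periods sweep out the rational \emph{twistor line} $\Omega_\Lambda\cap\P(W\otimes\C)$. Since the marking extends over the family, this entire twistor line lies in $\cP(\fM^0_\Lambda)$. The crucial lattice-theoretic fact, valid because $\Omega_\Lambda$ has signature $(3,n)$ with $\operatorname{rk}\Lambda\geq5$, is that any two points of $\Omega_\Lambda$ are joined by a finite chain of twistor lines; chaining these through a point already in the image then shows $\cP(\fM^0_\Lambda)=\Omega_\Lambda$. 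This yields the surjectivity in (3), once one knows that the $\Q$-factorial terminal hypothesis guarantees a persistent Kähler class (hence the plane $W$) along the whole family.

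Next I would analyse the fibres. Over a Mumford--Tate general period $p$ the transcendental lattice is as large as possible and there are no nontrivial $(1,1)$-classes beyond the forced ones; in particular there is no room for wall-crossing or nontrivial bimeromorphic models, so any two marked pairs with period $p$ are connected by twistor deformations identifying them as a single point of $\fM^0_\Lambda$, giving the bijectivity in (1). For a general (not necessarily Mumford--Tate general) period the fibre of $\cP$ may contain several points, but I would show these are pairwise \emph{inseparable} in the non-Hausdorff manifold $\fM^0_\Lambda$, so that they coincide in the Hausdorff reduction $\overline{\fM^0}_\Lambda$. As $\cP$ is an open local isomorphism and $\overline{\cP}$ becomes injective after this identification, $\overline{\cP}$ is an isomorphism onto its image; the image omits the countably many maximal Picard rank periods, which is exactly the locus where the inseparability analysis and the passage to a good birational model break down, proving (2).

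The main obstacle I anticipate is the construction and control of the twistor families in the singular category: for a $\Q$-factorial terminal primitive symplectic variety with a Kähler class one must produce an honest locally trivial deformation whose periods form the full twistor line, and ensure that Kähler classes survive along it. In the smooth case this rests on the existence of hyperkähler metrics, whereas in the singular case it requires the Kähler package together with deformation theory for symplectic varieties and an argument that the Kähler cone deforms well. The second delicate point is matching, on the nose, the failure of inseparability with the maximal Picard rank locus excluded in (2), and invoking the $\Q$-factorial terminal condition in (3) to rule out small contractions that would otherwise obstruct the surjectivity step.
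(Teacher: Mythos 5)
This statement is not proved in the paper at all: it is quoted verbatim from \cite[Theorem~1.1]{BakkerLehn2022}, so the only meaningful comparison is between your sketch and Bakker--Lehn's argument. Your proposal transplants Verbitsky's twistor-line proof of the Global Torelli Theorem into the singular setting, and its entire architecture --- surjectivity via chains of twistor lines through a point of the image, bijectivity over Mumford--Tate general periods via twistor deformations, and the inseparability analysis feeding off that generic picture --- rests on the existence, for a (possibly non-projective, singular) primitive symplectic variety $X$ with K\"ahler class $\omega$, of a locally trivial twistor family whose periods sweep out $\Omega_\Lambda\cap\P(W\otimes\C)$ for $W=\langle\operatorname{Re}\sigma,\operatorname{Im}\sigma,\omega\rangle$. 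This is precisely what is \emph{not} available here: the twistor construction requires a hyperk\"ahler metric in the class $\omega$ (a Calabi--Yau theorem on the singular space) together with the fact that the resulting deformation is a locally trivial family of primitive symplectic varieties, and neither is known for singular symplectic varieties. The absence of twistor lines is the acknowledged reason the singular Global Torelli Theorem required a new proof in the first place. You flag this as ``the main obstacle I anticipate,'' but since it is the load-bearing step of all three parts of your argument rather than a technical refinement, the proposal does not amount to a proof.

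Bakker--Lehn's actual route avoids twistor lines entirely. Very roughly: injectivity of $\overline{\cP}$ is obtained by showing that two marked pairs in the same fibre of $\cP$ over a period of non-maximal Picard rank are bimeromorphic and hence inseparable, via deformation-theoretic and MMP-type arguments through $\Q$-factorial terminalizations rather than via hyperk\"ahler rotation; surjectivity is deduced from openness of the period map (local Torelli) combined with the size of the locally trivial monodromy group and density/limiting arguments, with the $\Q$-factorial terminal hypothesis in part (3) used to control limits of locally trivial deformations. If you wanted a proof along your lines you would first have to construct twistor families for singular symplectic varieties, which is an open problem; otherwise the correct move here is simply to cite \cite{BakkerLehn2022}, as the paper does.
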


Finally, let $\ell\in\Lambda$ be a primitive class and put
\[ \Omega_\ell=\{p\in\Omega_\Lambda\mid (p,\ell)=0\}\qquad\mbox{ and }\qquad \fM_\ell=\cP^{-1}(\Omega_\ell). \]
By definition we have that $(X',\eta')\in\fM_\ell$ if $\eta'^{-1}(\ell)$ is of type $(1,1)$. 
Notice that if $(X',\eta')\in\fM_\ell$ then an infinitesimal neighborhood of $(X',\eta')$ is isomorphic to the Kuranishi spaces $\Deflt(X',L')$, where $L'$ is a line bundle on $X'$ such that $c_1(L')=\eta'^{-1}(\ell)$.



\subsection{Orientations}\label{section:orientation}

Let $\Lambda$ be a lattice of signature $(3,n)$.
The cone $\widetilde{\mathcal{C}}_{\Lambda}=\{ x\in\Lambda\otimes_\Z\R\mid (x,x)>0\}$ is connected and $\oH^2(\widetilde{\mathcal{C}}_{\Lambda},\Z)=\Z$ (\cite[Lemma 4.1]{Markman:Survey}). Any of the two generators of $\oH^2(\widetilde{\mathcal{C}}_{\Lambda},\Z)$ is an \emph{orientation} of $\widetilde{\mathcal{C}}_{\Lambda}$ (and corresponds to an orientation of a real positive 3-space of $\Lambda_\R$).

Let now $\ell\in\Lambda$ be a class with $\ell^2=0$. As in the previous section, let us put $\Omega_\ell=\{p\in\Omega_\Lambda\mid (p,\ell)=0\}$ and notice that it has two connected components. Following \cite[Section 4.3]{Markman:LagrangianFibrations}, the choice of an orientation on $\widetilde{\mathcal{C}}_{\Lambda}$ determines one of the two connected components of $\Omega_\ell$. Let us recall how.

First of all, if $p\in\Omega_\Lambda$ is a period, then $p$ determines a weight $2$ Hodge structure on $\Lambda$. If we denote by $\Lambda^{1,1}_\R(p)=\{x\in\Lambda\otimes_{\Z}\R\mid (x,p)=0\}$ the real part of type $(1,1)$, then the cone $\widetilde{\mathcal{C}}_p=\{x\in\Lambda^{1,1}_\R(p)\mid (x,x)>0\}$ has two connected components. As explained in \cite[Section 4.3]{Markman:LagrangianFibrations}, the choice of an orientation of $\widetilde{\mathcal{C}}_\Lambda$ uniquely determines the choice of a connected component of $\widetilde{\mathcal{C}}_p$. Now, by definition $\ell\in\Lambda^{1,1}_\R(p)$, and in fact it belongs to the closure of only one connected component of $\widetilde{\mathcal{C}}_p$, which by the discussion above corresponds to an orientation of $\widetilde{\mathcal{C}}_\Lambda$. 
Therefore, once an orientation on $\widetilde{\mathcal{C}}_\Lambda$ is fixed, a connected component of $\Omega_\ell$ is chosen by requiring that $\ell$ belongs to the determined connected component of $\widetilde{\mathcal{C}}_p$.

Let now $X$ be a primitive symplectic variety. Since $\oH^{1,1}(X,\R)$ is of signature $(1,b_2(X)-3)$, the cone of positive classes $\{x\in\oH^{1,1}(X,\R)\mid (x,x)>0\}$ has two connected components.  The \emph{positive cone} of $X$ is then the distinguished connected component $\mathcal{C}_X$, of the cone of positive classes, containing the K\"ahler cone (cf.\ \cite[Section 2.3]{BakkerLehn2022}). If $\eta$ is a marking of $X$ and we put $p=\mathcal{P}(X,\eta)$, then $\eta(\mathcal{C}_X)$ is a distinguished connected component of $\widetilde{\mathcal{C}}_p$, and hence it determines an orientation of $\widetilde{\mathcal{C}}_\Lambda$. 

\begin{rem}
    Let $X$ and $Y$ be two primitive symplectic varieties. An isometry 
    \[ g\colon \oH^2(X,\Q)\longrightarrow \oH^2(Y,\Q) \]
    comes in two flavors: either it is orientation preserving or it is orientation reversing. Geometrically, this can be interpreted by saying that $g$ is orientation preserving if it sends the positive cone of $X$ onto the positive cone of $Y$.

    In particular, locally trivial parallel transport operators are orientation preserving.
\end{rem}

If $(X,\eta)$ varies in a connected component of the corresponding moduli space, then the corresponding orientation remains fixed: a connected component $\mathfrak{M}_\Lambda^0$ of $\mathfrak{M}_\Lambda$ determines an orientation of $\widetilde{\mathcal{M}}_\Lambda$ (cf.\ \cite[Section 4]{Markman:Survey}). 

By the discussion at the beginning of this section, the choice of a connected component $\mathfrak{M}_\Lambda^0$ determines then a connected component $\Omega_\ell^+$ of $\Omega_\ell$. If $\mathcal{P}_0$ denotes the restriction of the period map $\mathcal{P}$ to $\mathfrak{M}_\Lambda^0$, then we define 
\begin{equation}\label{eqn: M ell 0} 
\mathfrak{M}_\ell^0=\mathcal{P}_0^{-1}(\Omega_\ell^+).
\end{equation}

\subsection{Prime exceptional divisors}\label{section:pex}
The following definition is \cite[Definition~5.1]{Markman:Survey} for smooth symplectic varieties, which can be extended to singular ones without any change.
\begin{defn}\label{defn:pex}
    Let $X$ be a primitive symplectic variety and $D\subset X$ an irreducible and reduced effective $\Q$-Cartier divisor. Then $D$ is \emph{prime exceptional} if $q_X(D)<0$.
\end{defn}
Prime exceptional divisors are uniruled and, if we denote by $\ell\in\oH_2(X,\Z)$ the class of a general curve in the ruling, then $D^\vee$ and $\ell$ are proportional by a rational constant (see \cite[Theorem~1.2.(1)]{LMP}). Vice versa, assume that $X$ is projective and let $\ell\in\oH_2(X,\Z)$ be the class of a rational curve ruling a divisor $D$; if $\ell$ is smooth and $D$ is Cartier, then $D$ is prime exceptional and $D^\vee$ and $\ell$ are proportional by a rational constant (see \cite[Lemma~3.13, Theorem~1.1]{LMP}). Finally, let us notice that prime exceptional divisors deform over their Hodge locus (see \cite[Theorem~1.2.(2)]{LMP}).

\subsection{The Huybrechts--Riemann--Roch polynomial}\label{section:HHRR}
Let us recall the following result.
\begin{thm}\label{RRpolynomial}
    \cite[Corollary~5.16]{BakkerLehn2022} Let $X$ be a primitive symplectic variety. There exists a unique polynomial $\RR_X(t)\in \Q[t]$ such that for any line bundle $L$ on $X$, it holds $\RR(q(c_1(L))) = \chi(L)$. Moreover, $\RR_X = \RR_{X'}$ for every locally trivial deformation $X'$ of $X$. 
\end{thm}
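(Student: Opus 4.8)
The plan is to reduce the computation of $\chi(X,L)$ to the Hirzebruch--Riemann--Roch theorem on a resolution, and then to use the Fujiki relation of \cref{prop:Fujiki} (in a generalized form) to rewrite every term as a polynomial in $q_X(c_1(L))$. First I would fix a Kähler resolution $\pi\colon\widetilde X\to X$. Since symplectic varieties have rational singularities, one has $R^i\pi_*\sO_{\widetilde X}=0$ for $i>0$ and $\pi_*\sO_{\widetilde X}=\sO_X$, so the projection formula together with the Leray spectral sequence gives
\[ \chi(X,L)=\chi(\widetilde X,\pi^*L). \]
Applying HRR on the compact complex manifold $\widetilde X$ of dimension $2n$ and expanding the Chern character yields
\[ \chi(X,L)=\sum_{p=0}^{2n}\frac{1}{p!}\int_{\widetilde X}(\pi^*c_1(L))^{p}\,\td_{2n-p}(\widetilde X), \]
so it remains to understand each homogeneous piece $P_p(\alpha):=\int_{\widetilde X}(\pi^*\alpha)^{p}\,\td_{2n-p}(\widetilde X)$ as a function of $\alpha=c_1(L)\in\oH^2(X,\Z)$.

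The key step is to show that $P_p$, which is a priori a homogeneous polynomial of degree $p$ on $\oH^2(X,\C)$, is a constant multiple of $q_X(\alpha)^{p/2}$ when $p$ is even and vanishes when $p$ is odd. For the leading term $p=2n$ this is exactly \cref{prop:Fujiki}: since $\pi$ is birational, the projection formula gives $P_{2n}(\alpha)=\int_X\alpha^{2n}=C\,q_X(\alpha)^n$. For the intermediate terms I would invoke the generalized Fujiki relation. By the projection formula $P_p(\alpha)=\int_X\alpha^{p}\,\gamma_p$ with $\gamma_p:=\pi_*\td_{2n-p}(\widetilde X)$, and $\gamma_p$ is a characteristic class canonically attached to $X$, hence invariant under parallel transport in any locally trivial family; thus $P_p$ is a $\Mon(X)$-invariant polynomial. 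Because the image of $\Mon(X)$ in $\Or(q_X\otimes\C)$ is Zariski dense, classical invariant theory for the orthogonal group forces every invariant homogeneous polynomial to be a power of $q_X$, so $P_p=c_p\,q_X^{\,p/2}$ for $p$ even and $P_p\equiv0$ for $p$ odd. Collecting the even terms produces a polynomial $\RR_X(t)=\sum_{j=0}^{n}\frac{c_{2j}}{(2j)!}\,t^{j}\in\Q[t]$ with $\RR_X(q_X(c_1(L)))=\chi(X,L)$. Uniqueness follows because replacing $L$ by $L^{\otimes k}$ scales $q_X(c_1(L))$ by $k^2$: as soon as $X$, or a locally trivial deformation on which line bundles may be created, carries one line bundle with $q_X(c_1(L))\neq0$, the identity $\RR_X(k^2 q_X(c_1(L)))=\chi(X,L^{\otimes k})$ pins $\RR_X$ down at infinitely many points, and $\RR_X$ has degree $n$.

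For the deformation invariance $\RR_X=\RR_{X'}$ I would argue that both sides of the defining identity are locally trivial deformation invariants. The Euler characteristic is constant in a flat family, the BBF form is invariant under locally trivial deformations by \cref{prop:BBF}, and line bundles deform along their Hodge locus; hence, spreading a line bundle over a locally trivial family connecting $X$ and $X'$, the two polynomials agree at the corresponding value of $q_X$, and varying $L$ over sufficiently many values yields $\RR_X=\RR_{X'}$ in $\Q[t]$.

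The main obstacle is precisely the intermediate step: proving that $P_p$ is a polynomial in $q_X$ and that the odd terms vanish. Its content is the generalized Fujiki relation together with the largeness (Zariski density) of the monodromy representation on $\oH^2(X,\C)$, the same mechanism underlying \cref{prop:Fujiki}; alternatively one may run Fujiki's original polarization-and-deformation argument to avoid quoting density directly. Once that input is secured, the HRR expansion and the bookkeeping of constants are routine.
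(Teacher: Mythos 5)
This theorem is quoted from \cite[Corollary~5.16]{BakkerLehn2022} and the paper gives no proof of its own; the argument it relies on is exactly the one sketched in the Remark immediately following the statement. Your proposal reproduces that same route --- rational singularities reduce $\chi(X,L)$ to Hirzebruch--Riemann--Roch on a resolution, and the generalized Fujiki relation for classes remaining of type $(k,k)$ in locally trivial families (the paper's \eqref{eq:todd}, i.e.\ \cite[Proposition~5.15]{BakkerLehn2022}) applied to the pushed-forward Todd components turns each term into a power of $q_X$ --- so it is essentially the cited proof.
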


\begin{defn}
    Let $X$ be a primitive symplectic variety. Define the \emph{Huybrechts--Riemann--Roch polynomial} of $X$ to be the polynomial $\RR_X(t)$ in \cref{RRpolynomial}.
\end{defn}

\begin{rem}
When $X$ is smooth, by \cite[Corollary~23.17]{GHJ2003} for any $\alpha\in \oH^{4j}(X,\Q)$ that is of type $(2k,2k)$ for all small deformations of $X$, there exists a constant $C(\alpha)\in\Q$ such that
\begin{equation}\label{eq:todd}
    \int_X \alpha \smile \beta^{2n-2k} = C(\alpha)\cdot q_X(\beta)^{n-k}
\end{equation}
for all $\beta\in \oH^2(X,\Q)$. Combining this with the Riemann--Roch--Hirzebruch formula we get

\begin{equation}\label{eq:chi}
   \chi(X,L) =  \sum_{i=0}^n\frac{1}{(2i)!}\int_X Td_{2n-2i}(X)\smile c_1(L)^{2i} = \sum_{i=0}^n \frac{a_i}{(2i)!} \cdot q_X(L)^{i}
\end{equation}
where $a_i := C(Td_{2n-2i}(X))$. Hence in the smooth case the Huybrechts--Riemann--Roch polynomial is $\RR_X(t) = \sum_{i=0}^n \frac{a_i}{(2i)!}t^i$.
\end{rem}

In the case of irreducible symplectic varieties with orbifold singularities the Huybrechts--Riemann--Roch polynomial can be computed as in \cite[Section 3]{BeckmanSong2022}.

\begin{exmp}\label{exmp: RR for known HK}
    Riemann-Roch polynomials for the deformation classes constructed by Beauville were computed in \cite{EGM01} and \cite{Nieper03}. Explicitly, if $X$ is an irreducible holomorphic symplectic manifold of type $\operatorname{K3}^{[n]}$, then the Huybrechts--Riemann--Roch polynomial is given by
    \[
        \RR_X(t) = \binom{t/2 + n + 1}{n}.
    \]
    If $X$ is of type $\text{Kum}_n$, then the Huybrechts--Riemann--Roch polynomial takes the form
    \[
        \RR_X(t) = (n+1)\binom{t/2 + n}{n}.
    \]
    We will say that the Huybrechts--Riemann--Roch polynomial is of $\operatorname{K3}^{[n]}$-type or $\text{Kum}_n$-type if it corresponds to one of the two examples above. In \cite{Rios2020} it is proven that the Huybrechts--Riemann--Roch polynomials for the deformation class of OG6 and OG10 are of $\text{Kum}_3$-type and $K3^{[5]}$-type respectively.
\end{exmp}

\section{Lagrangian fibrations}\label{section:LagrangianFibrations}
Throughout this section $X$ is a primitive symplectic variety of dimension $2n$.

\begin{defn}
    Let $X$ be a primitive symplectic variety of dimension $2n$.
    \begin{enumerate}
        \item A subvariety $Z\subset X$ of dimension $n$ is called \emph{lagrangian} if $Z\cap X_{\operatorname{reg}}\neq\emptyset$ and $\sigma_{\operatorname{reg}}|_{Z_{\operatorname{reg}}\cap X_{\operatorname{reg}}}=0$.

        \item A surjective morphism $f\colon X\to B$ with connected fibres onto a normal K\"ahler space of dimension $n$ is a \emph{lagrangian fibration} if the general fibre of $f$ is a lagrangian subvariety.
    \end{enumerate}
\end{defn}

The following result is originally due to Schwald (see \cite{Schwald2020}). 

\begin{thm}[\protect{\cite[Theorem 2.8]{KamenovaLehn2024}}]\label{thm:KL instead of Sch}
    Let $X$ be a primitive symplectic variety of dimension $2n$ and $f\colon X\to B$ a surjective morphism with connected fibres onto a normal K\"ahler space.

    Then $f\colon X\to B$ is a lagrangian fibration and
    \begin{enumerate}
        \item $B$ is a $\Q$-factorial projective klt variety of Picard rank $1$;
        \item the general fibre of $f$ is an abelian variety of dimension $n$ completely contained in the smooth locus of $X$;
        \item $f$ is equidimensional and all irreducible components of each fibre are lagrangian subvarieties.
    \end{enumerate}
    If moreover $X$ is irreducible symplectic, then $B$ is Fano. In this case, if $B$ is smooth, then $B\cong\P^n$.
\end{thm}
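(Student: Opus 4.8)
The plan is to follow the classical Matsushita argument, transporting it to the singular setting by replacing the cohomology ring of a hyperkähler manifold with the Fujiki relations of \cref{prop:Fujiki} and by working with reflexive forms over $X_{\operatorname{reg}}$. Write $d=\dim B$ and assume $0<d<2n$ (otherwise $f$ is generically finite or constant and there is nothing to prove). Fix a Kähler class $\omega_B$ on $B$ and set $\alpha=f^*\omega_B\in\oH^{1,1}(X,\R)$, a non-zero nef class. Since $\alpha$ is pulled back from a $d$-dimensional space, $\alpha^{d+1}=f^*(\omega_B^{d+1})=0$, while $\alpha^{d}\neq0$: for a Kähler class $\kappa$ on $X$ the projection formula gives $\int_X\alpha^{d}\smile\kappa^{2n-d}=\big(\int_B\omega_B^{d}\big)\cdot\big(\int_F\kappa^{2n-d}|_F\big)>0$, where $F$ is a general fibre. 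In particular $\alpha^{2n}=0$, so \cref{prop:Fujiki} forces $C\,q_X(\alpha)^n=0$ and hence $q_X(\alpha)=0$. The same reasoning applied to any pullback class shows that $f^*N^1(B)_\R\subset\oH^{1,1}(X,\R)$ is totally isotropic for $q_X$.

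First I would bound $d\le n$. Polarising \cref{prop:Fujiki} yields the usual multilinear relation: $\int_X\beta_1\smile\cdots\smile\beta_{2n}$ is a fixed constant multiple of the sum, over the perfect matchings of the $2n$ classes, of the products of the pairings $q_X(\beta_i,\beta_j)$. Applying this to $\alpha^{n+1}\smile\beta^{n-1}$ for arbitrary $\beta\in\oH^2(X,\R)$, a short counting argument shows that every matching of the $n+1$ copies of $\alpha$ contains at least one $\alpha$--$\alpha$ pair, contributing the factor $q_X(\alpha)=0$; hence $\int_X\alpha^{n+1}\smile\beta^{n-1}=0$ for all $\beta$. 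Choosing $\beta=\kappa$ Kähler and comparing with the positivity computation of the previous paragraph, which gives a strictly positive value as soon as $d\ge n+1$, yields a contradiction, so $d\le n$.

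The heart of the argument, and the step I expect to be hardest, is to prove that the general fibre $F$ is \emph{isotropic} for $\sigma$, so that $\dim F=2n-d\le n$, i.e.\ $d\ge n$; combined with the previous paragraph this gives $d=n$ and shows $F$ is lagrangian. On the general fibre $F\subset X_{\operatorname{reg}}$ (which is smooth and lies in the smooth locus by generic smoothness and the codimension bound on $X_{\operatorname{sing}}$) the conormal sequence identifies $N^*_{F/X}=\operatorname{im}(f^*\Omega_B\to\Omega_X)|_F\cong\sO_F^{\oplus d}$, and the symplectic isomorphism $\sigma\colon T_X\xrightarrow{\ \sim\ }\Omega_X$ identifies the symplectic orthogonal $T_F^{\perp}$ with $\sigma^{-1}(N^*_{F/X})$, a trivial subbundle of rank $d$. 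Isotropy $T_F\subseteq T_F^{\perp}$ is equivalent to the vanishing of $\sigma|_F\in\oH^0(F,\Omega_F^2)$, and this is exactly where primitivity enters: were $\sigma|_F\neq0$, pushing forward the resulting relative $2$-forms would produce reflexive holomorphic $2$-forms on $X$ independent of $\sigma$, contradicting $\oH^0(X,\Omega_X^{[2]})=\C\sigma$. The delicate points are to make this contraction argument rigorous with reflexive forms on the singular total space and to control the behaviour along the fibres; this is the principal obstacle of the whole statement.

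Once $d=n$ and the fibres are lagrangian, the remaining assertions follow by now-standard considerations. From $T_F=T_F^{\perp}\cong\sO_F^{\oplus n}$ the tangent bundle of the general fibre is trivial, so $F$ is a complex torus, and being a fibre of a Kähler morphism it is an abelian variety, giving (2). For (3), the symplectic form vanishes identically along every fibre by continuity from the general one, so each irreducible component of each fibre is isotropic, hence of dimension $\le n$; together with the lower bound $\dim X-\dim B=n$ this forces equidimensionality. For (1), the totally isotropic subspace $f^*N^1(B)_\R$ of $\oH^{1,1}(X,\R)$, which has signature $(1,b_2(X)-3)$ by \cref{prop:BBF} and therefore Witt index $1$, must be a single line; as $f^*$ is injective, $B$ has Picard rank $1$. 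Projectivity then follows since a normal Kähler space of Picard rank $1$ with a big class is Moishezon, while kltness and $\Q$-factoriality of $B$ come from the rational (indeed canonical) singularities of $X$ together with the canonical bundle formula, exactly as in the smooth case. Finally, if $X$ is irreducible symplectic it carries no reflexive holomorphic forms other than the powers of $\sigma$, whence $\oH^0(B,\Omega_B^{[p]})=0$ for $0<p\le n$; combined with Picard rank $1$ and kltness this gives that $-K_B$ is ample, so $B$ is Fano, and when $B$ is smooth the theorem of Hwang on bases of lagrangian fibrations yields $B\cong\P^n$.
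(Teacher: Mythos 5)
The paper does not prove this statement at all: it is imported verbatim from \cite[Theorem~2.8]{KamenovaLehn2024} (and attributed originally to Schwald), so there is no internal proof to compare against and you are reconstructing a substantial external theorem. Your preliminary steps are correct and are indeed the Matsushita--Schwald skeleton: $q_X(f^*\omega_B)=0$, total isotropy of $f^*N^1(B)_\R$, and $d\le n$ via the polarised Fujiki relation. But the step you flag as the heart of the argument is both misplaced and unfinished. You try to obtain $d\ge n$ from isotropy of the general fibre, and to obtain isotropy from a pushforward of relative $2$-forms contradicting $\oH^0(X,\Omega_X^{[2]})=\C\sigma$; that contradiction is not how the argument goes (a nonzero $\sigma|_F$ on one general fibre does not manufacture a second global reflexive $2$-form on $X$), and you concede you cannot complete it. The order of deduction should be reversed: the Fujiki relations you already wrote down give $\int_X\alpha^{j}\smile\kappa^{2n-j}=c_j\,q_X(\alpha,\kappa)^j q_X(\kappa)^{n-j}$ for $j\le n$ and $0$ for $j>n$; since $\alpha^{d+1}=0$, the case $d+1\le n$ would force $q_X(\alpha,\kappa)=0$ and hence $\int_X\alpha^{d}\smile\kappa^{2n-d}=0$, contradicting your positivity computation. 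So $d=n$ follows from the Fujiki relations alone, and \emph{then} isotropy of the fibre follows from the matching count applied to $\int_X\alpha^{n}\smile\sigma\smile\bar\sigma\smile\kappa^{n-2}=0$ (only $n-2$ copies of $\kappa$ are available to absorb the $n$ copies of $\alpha$) together with pointwise semipositivity of $\sigma\wedge\bar\sigma\wedge\kappa^{n-2}$ on the fibre. Primitivity is not what makes the fibres isotropic.

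Two further assertions are genuine gaps rather than routine details. First, the containment $F\subset X_{\operatorname{reg}}$ does not follow from ``generic smoothness and the codimension bound on $X_{\operatorname{sing}}$'': the singular locus may have codimension $2$ and dominate $B$, in which case every fibre meets it. This containment is itself assertion (2) of the theorem and in the literature it is proved using Kaledin's stratification of $X_{\operatorname{sing}}$ into symplectic leaves and a dimension count on each stratum, an ingredient absent from your proposal. Second, the structure of $B$ is handled too quickly: projectivity requires producing a \emph{rational} Kähler class on $B$ (so one must control all of $\oH^2(B,\R)$, not just $N^1(B)$, and ``a big class'' is not available a priori); kltness and $\Q$-factoriality are theorems in this setting, not formal consequences of rational singularities of $X$; and ``no reflexive forms plus Picard rank $1$ plus klt'' does not imply $-K_B$ ample --- one argues instead via the canonical bundle formula $0=K_X=f^*(K_B+\Delta)$ and rules out $K_B\equiv0$ using irreducibility.
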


Notice that the claim of the theorem above is that the general fibre of a lagrangian fibration is projective even if $X$ is not. The following lemma is essentially \cite[Lemma~2.2]{Matsushita:Deformations2016} (see also \cite[Lemma~1.5]{Voisin1992}). We provide the details of the proof for completeness.

\begin{lem}\label{lem:restrictionVoisinsingular}
In the hypothesis of Theorem~\ref{thm:KL instead of Sch}, let $X_b$ be a smooth fiber of $f$ and let $F:=f^{*}\sO_B(1)\in \oH^2(X,\Z)$. If $r_b\colon\oH^2(X,\Z)\to\oH^2(X_b,\Z)$
is the restriction map, then 
\[ \ker(r_b)=F^\perp, \]
where the perpendicular is taken with respect to the BBF form on $X$. In particular, the image of the restriction map is of rank $1$ and is generated by an ample class on $X_b$.
\end{lem}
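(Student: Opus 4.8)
The plan is to reduce everything to a single Fujiki-type computation on the fibre and then invoke the Hodge index theorem on $X_b$. First I would record the two easy memberships. Since $F=f^{*}\sO_B(1)$ is pulled back from the base and $X_b$ is a fibre, $r_b(F)=(f^{*}\sO_B(1))|_{X_b}=0$, so $F\in\ker(r_b)$. Moreover $F$ is isotropic: as $\dim B=n$ we have $\sO_B(1)^{n+1}=0$, hence $F^{n+1}=0$ and in particular $F^{2n}=0$, so the Fujiki relation (\cref{prop:Fujiki}) forces $q_X(F)=0$. I would also note that $r_b\colon\oH^2(X,\C)\to\oH^2(X_b,\C)$ is a morphism of weight-two Hodge structures that kills $\oH^{2,0}(X)=\C\sigma$, because $X_b$ is lagrangian and therefore $\sigma|_{X_b}=0$ (\cref{thm:KL instead of Sch}); hence the image of $r_b$ lies in $\oH^{1,1}(X_b,\R)$. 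Since $q_X$ is non-degenerate (\cref{prop:BBF}) and $F\neq0$, the subspace $F^\perp$ has codimension $1$, so the statement is equivalent to the two inclusions $\ker(r_b)\subseteq F^\perp$ and $F^\perp\subseteq\ker(r_b)$.

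The computational heart is the following identity. Writing $F^n=d\,[X_b]$ with $d=\deg_{\sO_B(1)}B>0$ and polarising the Fujiki relation, the vanishing $q_X(F,F)=0$ kills every matching containing an $F$--$F$ pair; the only surviving matchings pair each of the $n$ copies of $F$ with a distinct $\alpha_i$, so that
\[ \int_{X_b} r_b(\alpha_1)\cdots r_b(\alpha_n)=\frac{1}{d}\int_X \alpha_1\cdots\alpha_n\,F^n=c_0\prod_{i=1}^n q_X(\alpha_i,F) \]
for a constant $c_0>0$ depending only on $n$ and the Fujiki constant. Taking $\alpha_1=\dots=\alpha_n=\alpha$ gives $\int_{X_b} r_b(\alpha)^n=c_0\,q_X(\alpha,F)^n$, so $r_b(\alpha)=0$ immediately forces $q_X(\alpha,F)=0$; this proves $\ker(r_b)\subseteq F^\perp$.

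The reverse inclusion $F^\perp\subseteq\ker(r_b)$ is the main obstacle, and this is where I would bring in Hodge theory on the fibre. Fix a Kähler class $\omega$ on $X$ and set $h:=r_b(\omega)=\omega|_{X_b}$, a Kähler class on the smooth compact Kähler manifold $X_b$; the identity above gives $\int_{X_b}h^n=c_0\,q_X(\omega,F)^n>0$, so $q_X(\omega,F)\neq0$. Now take a real class $\alpha\in\oH^2(X,\R)$ with $q_X(\alpha,F)=0$ and put $w:=r_b(\alpha)$, a real $(1,1)$-class on $X_b$. The identity yields
\[ \int_{X_b} w\,h^{\,n-1}=c_0\,q_X(\alpha,F)\,q_X(\omega,F)^{\,n-1}=0, \qquad \int_{X_b} w^2\,h^{\,n-2}=c_0\,q_X(\alpha,F)^2\,q_X(\omega,F)^{\,n-2}=0. \]
The first equality says that $w$ is primitive with respect to $h$, and the Hodge--Riemann bilinear relations make the form $\xi\mapsto\int_{X_b}\xi^2 h^{\,n-2}$ negative definite on real primitive $(1,1)$-classes; the second equality then forces $w=0$. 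Hence $\alpha\in\ker(r_b)$, which proves $F^\perp\subseteq\ker(r_b)$.

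Combining the two inclusions yields $\ker(r_b)=F^\perp$; the argument runs over $\R$, but since $\oH^2(X_b,\Z)$ is torsion-free the conclusion holds integrally. Consequently $r_b$ has rank $1$, and its image is generated by $h=r_b(\omega)$, the restriction of a Kähler class, which is ample on $X_b$. The only genuinely delicate point is the reverse inclusion; everything else is a formal consequence of the polarised Fujiki relation, and the argument never uses that $X_b$ is abelian, only that it is a smooth Kähler fibre on which $\sigma$ restricts to zero.
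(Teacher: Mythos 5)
Your proof is correct and follows essentially the same route as the paper's: both reduce the statement to the polarised Fujiki relation evaluated against $F^n=d\,[X_b]$ (the paper phrases this as comparing the $s^{n-1}t^n$ and $s^{n-2}t^n$ coefficients of $\int_X(\alpha+s\omega+tF)^{2n}=C\,q_X(\alpha+s\omega+tF)^n$) and then conclude via Hard Lefschetz and the Hodge--Riemann bilinear relations on the smooth Kähler fibre $X_b$. Your write-up is just a more explicit unpacking of that same computation.
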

\begin{proof}
   The restriction $r_b\colon\oH^2(X,\C)\to \oH^2(X_b,\C)$ is a morphism of pure weight two Hodge structures and, if $\sigma_X$ is a symplectic form on $X$, we have that $r_b(\sigma_X) = 0$. Therefore, $\mathrm{Im}(r)\subseteq \oH^{1,1}(X,\C)$. Now, if $\omega\in \oH^{2}(X,\C)$ is a K\"ahler class on $X$, then since $X_b\subset X_{\mathrm{reg}}$ we have that $r_b(\omega)$ is a K\"ahler class in $\oH^{2}(X_b,\C)$. By the Hodge--Riemann bilinear relations and the Lefschetz Hard Theorem, if $\alpha\in \oH^2(X,\C)$ satisfies 
    \[
        \int_{X_b}r_b(\alpha)\smile r_b(\omega)^{n-1} = \int_{X_b}r_b(\alpha)^2\smile r_b(\omega)^{n-2} = 0,
    \]
    then $r_b(\alpha) = 0$. Let $s$ and $t$ be formal variables. By the Fujiki relations (cf.\ \cref{prop:Fujiki}) we get the following
    \[
        c_Xq_X(\alpha + s\omega + tF)^n = \int_X (\alpha + s\omega + tF)^{2n}.  
    \]
    By comparing the $s^{n-1}t^n$ and $s^{n-2}t^n$ terms in both sides we get $r(\alpha) = 0$ if and only if $q(\alpha,F) = 0$.  
\end{proof}

\subsection{Polarisation types}\label{section:polarisation types}

Denote by $B^\circ\subset B$ the subvariety parametrizing smooth fibers. Then $B^\circ\neq\emptyset$, the morphism $\pi^\circ\colon X^\circ\to B^\circ$ is a proper abelian fibration and  $R^1(\pi^\circ)_*\Z_{X^\circ}$ is a local system. The images of $\oH^2(X,\Q)$ and $\oH^2(X^\circ,\Q)$ coincide with the subspace of monodromy invariants in $\oH^2(X_b,\Q)$ by Deligne's global invariant cycle theorem. Hence we get
\[
    \oH^0(B^\circ, R^2\pi^\circ_*\Q) = (\mathrm{Im}(\oH^2(X,\Q) \to \oH^2(X_b, \Q)) \cong \Q
\]
by \cref{lem:restrictionVoisinsingular}. This corresponds to a morphism $(R^2(\pi^\circ)_*\underline{\Q}_{X^\circ})^\vee\to \underline{\Q}_{X^\circ}$ of VHS, unique up-to a scalar. The morphism can be uniquely determined once we assume it to be primitive and represents an ample class on each fiber. Since $\pi^\circ$ is a fibration in abelian varieties we have $R^2(\pi^\circ)_*\underline{\Z}_{X^\circ} \cong \wedge^2 R^1(\pi^\circ)_*\underline{\Z}_{X^\circ}$ and henceforth there is a unique primitive polarization 
\[
    (R^1(\pi^\circ)_*\underline{\Z}_{X^\circ})^\vee\otimes (R^1(\pi^\circ)_*\underline{\Z}_{X^\circ})^\vee \to\underline{\Z}_{B^\circ}. 
\]
This defines a projective abelian scheme $\nu\colon P^\circ\to B^\circ$.  The proof given in \cite[Theorem~3.1]{Kim:DualLagrangian} applies also in this case and yields that $\pi^\circ\colon X^\circ\to B^\circ$  is an analytic torsor under $\nu$ with a unique choice of a primitive polarization 
\begin{equation}\label{eq:polarizationtorsor}
    \lambda\colon P^\circ\to (P^\circ)^\vee.  
\end{equation}

\begin{defn}
    The polarization scheme of $\pi$ is the kernel of the polarization \eqref{eq:polarizationtorsor}. The polarization type of $\pi$, denoted by $\mathrm{d}(\pi)$, is the $n$-tuple of positive integers $(d_1,\dots,d_n)$ with $d_1|\dots|d_n$ such that the fibers of the polarization scheme are isomorphic to $(\Z/d_1 \oplus \dots \oplus \Z/d_n)^{\oplus 2}$.
\end{defn}

\subsection{Deformations of lagrangian fibrations}

Let us start with the main definition.
\begin{defn}\label{defn:def of lagr fibr}
    Let $p\colon\mathcal{X}\to T$ be a locally trivial family of primitive symplectic varieties. Then we say that it is a \emph{locally trivial family of lagrangian fibrations} if there exists a commutative diagram 
    \[ 
    \xymatrix{
    \mathcal{X}\ar[rr]^{f}\ar[dr]_{p} &   & \mathcal{B} \ar[dl]^{s} \\
                & T &
    }
    \]
    such that
    \begin{itemize}
        \item $f$ is a $T$-morphism;
        \item $s$ is projective;
        \item for every $t\in T$, the restriction $f_t\colon\mathcal{X}_t\to\mathcal{B}_t$ is a lagrangian fibration.
    \end{itemize}

    By abuse of notation, we denote by $p\colon\mathcal{X}/\mathcal{B}\to T$ a locally trivial family of lagrangian fibrations.
\end{defn}

We will say that two lagrangian fibrations $f_i\colon X_i\to B_i$ are \emph{locally trivial deformation equivalent} if there exists a locally trivial family of lagrangian fibrations $p\colon\mathcal{X}/\mathcal{B}\to T$ where $T$ is connected and $f_i$ belong to the family.

\begin{thm}(Wieneck, Kim)\label{thm:polarisation type is invariant}
    Let $X$ be a primitive symplectic variety and let $\pi\colon X\to B$ be a lagrangian fibration. Then the polarization type of $\pi$ is invariant under locally trivial deformations of lagrangian fibrations.   
\end{thm}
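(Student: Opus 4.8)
The plan is to encode the polarisation type in a local system of finite abelian groups over the total space of the family of abelian fibrations, and then to invoke that such a local system has constant stalks over a connected base. Fix a locally trivial family of lagrangian fibrations $p\colon\mathcal{X}/\mathcal{B}\to T$ as in Definition~\ref{defn:def of lagr fibr}, with $T$ connected, and write $f\colon\mathcal{X}\to\mathcal{B}$, $s\colon\mathcal{B}\to T$, and $f_t\colon\mathcal{X}_t\to\mathcal{B}_t$ for the fibres.

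First I would pass to the relative smooth locus. Let $\mathcal{B}^\circ\subset\mathcal{B}$ be the open subset over which $f$ is smooth with abelian-variety fibres, set $\mathcal{X}^\circ=f^{-1}(\mathcal{B}^\circ)$, and let $\pi^\circ\colon\mathcal{X}^\circ\to\mathcal{B}^\circ$ be the induced proper abelian fibration over $T$. By \cref{thm:KL instead of Sch} each $\mathcal{B}^\circ_t=\mathcal{B}^\circ\cap\mathcal{B}_t$ is open and dense in $\mathcal{B}_t$, and the restriction of $\pi^\circ$ over $\mathcal{B}^\circ_t$ is exactly the abelian fibration used to define $\mathrm{d}(f_t)$ in \cref{section:polarisation types}.

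Next I would globalise over $T$ the construction of \cref{section:polarisation types}. Writing $\mathcal{V}:=R^1(\pi^\circ)_*\underline{\Z}_{\mathcal{X}^\circ}$ for the relative $H^1$, which restricts on each $\mathcal{B}^\circ_t$ to the corresponding local system of the fibre, the fibrewise polarisation there (unique once required to be primitive and ample on the fibres) assembles into a morphism of variations of Hodge structure
\[ \phi\colon \mathcal{V}^\vee\longrightarrow \mathcal{V} \]
whose restriction to each $\mathcal{B}^\circ_t$ is the polarisation of $f_t$. Its cokernel $\mathcal{K}:=\operatorname{coker}(\phi)$ is then a local system of finite abelian groups on $\mathcal{B}^\circ$ whose stalk at a point $b\in\mathcal{B}^\circ_t$ is canonically the fibre of the polarisation scheme of $f_t$, namely $(\Z/d_1\oplus\dots\oplus\Z/d_n)^{\oplus2}$ with $(d_1,\dots,d_n)=\mathrm{d}(f_t)$. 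I would then conclude by connectedness: a local system of finite abelian groups on a connected space has all stalks isomorphic, and the invariant factors $(d_1,\dots,d_n)$ are recovered from the isomorphism type of the stalk; since $T$ is connected, each $\mathcal{B}^\circ_t$ is connected (open and dense in the irreducible $\mathcal{B}_t$), and $s^\circ\colon\mathcal{B}^\circ\to T$ is open, the total space $\mathcal{B}^\circ$ is connected, so the stalk type, hence $\mathrm{d}(f_t)$, is independent of $t$.

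The main obstacle I expect is the relative construction of $\phi$: the argument in \cref{section:polarisation types} produces the polarisation fibrewise over each $\mathcal{B}^\circ_t$ using Deligne's global invariant cycle theorem and the uniqueness of the primitive ample generator of the monodromy invariants, and one must check that these fibrewise choices glue into a single morphism of local systems over the whole of $\mathcal{B}^\circ$. Equivalently, one verifies that the rank-one subspace $\oH^0(\mathcal{B}^\circ_t,R^2\pi^\circ_{t*}\Q)\cong\Q$ persists as a rank-one sub-local-system of $R^2\pi^\circ_*\Q$ over $\mathcal{B}^\circ$; this follows from \cref{lem:restrictionVoisinsingular} applied in the family together with the local triviality of $p$, and the primitivity and ampleness normalisations then pin down $\phi$ uniquely, so that no monodromy ambiguity arises.
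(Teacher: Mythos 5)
Your argument is correct and is essentially the same as the paper's: the paper simply declares that the proof of \cite[Corollary~3.32]{Kim:DualLagrangian} applies line by line, and that proof is exactly the mechanism you reconstruct, namely that the polarisation scheme assembles into a local system of finite abelian groups over the connected relative smooth locus $\mathcal{B}^\circ$, so its stalk type, and hence the invariant factors $(d_1,\dots,d_n)$, cannot jump. The one point you rightly flag as the crux, gluing the fibrewise polarisations into a single morphism of local systems, is handled as you indicate, by using the local (topological) triviality of $p$ to spread a K\"ahler class of a fibre $\mathcal{X}_{t_0}$ to nearby slices and \cref{lem:restrictionVoisinsingular} to see that its restriction flatly spans the rank-one invariant subspace, which pins down the primitive ample generator as a flat section.
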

\begin{proof}
    The proof given in \cite[Corollary 3.32]{Kim:DualLagrangian} applies line by line.
\end{proof}

\subsection{The SYZ conjecture}\label{section:SYZ}

Recall that a line bundle $L$ on a compact K\"ahler space $X$ if \emph{nef} if it belongs to the closure of the K\"ahler cone.


\begin{rem}
    Let $X$ be a normal compact K\"ahler space and $p\colon\widetilde X \to X$ a resolution of singularities.
    As a consequence of \cite[Lemma~2.38]{DHP:MMP4Complex}, a line bundle $L$ on $X$ is nef if and only if $f^*L$ is nef on $\widetilde X$ (see also \cite[Proposition~2.7]{NAKAYAMA:LowerSemiContinuity} for projective varieties).
\end{rem}

Let $X$ is a primitive symplectic variety. If $f\colon X\to B$ is a lagrangian fibration and $b=f^*\sO_B(1)$, then $b$ is semiample, hence nef, and $q_X(b)=0$. The SYZ conjecture predicts that the converse holds.

\begin{conjecture}[SYZ conjecture for primitive symplectic varieties]
    Let $X$ be a primitive symplectic variety and $L$ a line bundle on it. If $L$ is nef and $q_X(L)=0$, then there exists a lagrangian fibration $f\colon X\to B$ such that $L=f^*\sO_B(1)$.
\end{conjecture}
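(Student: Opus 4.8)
The plan is to decompose the conjecture into a structural step, which converts a semiample isotropic class into a lagrangian fibration, and a deformation-theoretic step, which produces semiampleness by propagating the lagrangian property across the locally trivial deformation class. First I would reduce to the case that $L$ is primitive: if $L=mL_0$ with $L_0$ primitive then $L$ nef forces $L_0$ nef (the closure of the K\"ahler cone is stable under positive scaling) and $q_X(L_0)=q_X(L)/m^2=0$, while a lagrangian fibration $f\colon X\to B$ with $L_0=f^*\sO_B(1)$ realises $L=f^*\sO_B(m)$. So assume $L$ primitive.

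Next I claim it suffices to prove that $L$ is \emph{semiample}. Working with the Fujiki relation (Proposition~\ref{prop:Fujiki}) applied to $c_1(L)+s\,\omega$ for a K\"ahler class $\omega$, and using that $L$ nef isotropic nonzero lies on the boundary of the positive cone so that $q_X(L,\omega)>0$, one extracts $\int_X c_1(L)^n\smile\omega^n>0$ and $\int_X c_1(L)^{n+1}\smile\omega^{n-1}=0$; since all the classes involved are nef this forces the numerical dimension of $L$ to be exactly $n$. Hence if a power of $L$ is globally generated, the associated morphism (after Stein factorisation) is a surjection $f\colon X\to B$ with $\dim B=n$ and connected fibres, which by Theorem~\ref{thm:KL instead of Sch} is automatically a lagrangian fibration, and by Lemma~\ref{lem:restrictionVoisinsingular} satisfies $L=f^*\sO_B(1)$ after normalising the base. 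Thus the entire content of the conjecture is the abundance-type implication \emph{nef and isotropic} $\Rightarrow$ \emph{semiample}.

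To prove semiampleness I would pass to the marked moduli space. Setting $\ell=\eta(c_1(L))$, the pair $(X,\eta)$ lies in $\fM_\ell^{\operatorname{nef}}\subset\fM_\ell^0$. The key leverage is Theorem~B (Proposition~\ref{prop:nef=lagr}): as soon as $\fM_\ell^{\operatorname{lagr}}\neq\emptyset$, one has $\fM_\ell^{\operatorname{nef}}=\fM_\ell^{\operatorname{lagr}}$, whence $(X,\eta)\in\fM_\ell^{\operatorname{lagr}}$ and $L$ defines a lagrangian fibration. So the whole problem reduces to exhibiting a \emph{single} member $(X',\eta')$ of the locally trivial deformation class on which $\eta'^{-1}(\ell)$ is lagrangian. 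The natural way to search for such a member is to deform to a very general period in the component $\Omega_\ell^+$, where by the Global Torelli Theorem (Proposition~\ref{thm:global Torelli}) the Picard group is generated by the isotropic class, and then to build the fibration there directly --- for instance by a twistor/hyperk\"ahler-rotation argument or a relative minimal model construction analogous to the smooth case.

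The hard part --- in fact the reason the statement remains a conjecture --- is precisely this existence step $\fM_\ell^{\operatorname{lagr}}\neq\emptyset$. For the \emph{known} deformation classes one can produce an explicit lagrangian fibration (e.g.\ a Beauville--Mukai system on a moduli space of sheaves, which is exactly what drives Theorem~A), but for an \emph{arbitrary} primitive symplectic variety no such construction is available, and manufacturing a lagrangian fibration out of a nef isotropic class with no further structural input is the open core of SYZ. A second, logically prior obstacle is that Theorem~B is established only under the $\Q$-factorial and terminal hypothesis, whereas a general primitive symplectic variety may have worse singularities; extending the \emph{nef $=$ lagrangian} propagation to that generality would require a workable relative minimal model theory for locally trivial families of such singular symplectic varieties, which is not presently available. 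For these reasons I do not expect this plan to close the conjecture in full generality: it reduces the statement to the existence step together with an extension of Theorem~B beyond the $\Q$-factorial terminal setting, and it is the existence step that I expect to be the genuine obstruction.
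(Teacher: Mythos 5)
Your proposal is, correctly, an assessment rather than a proof: the statement is stated in the paper as a conjecture, and the paper itself proves it only for varieties of type $\operatorname{K3}^{[k]}_m$ (Theorem~\ref{thm:SYZ for M(m,k)}). Your architecture coincides with the paper's proof of that special case: Theorem~B (Proposition~\ref{prop:nef=lagr}) is exactly the device that converts membership in $\fM_\ell^{\operatorname{nef}}$ into membership in $\fM_\ell^{\operatorname{lagr}}$, and the nonemptiness $\fM_\ell^{\operatorname{lagr}}\neq\emptyset$ is supplied, for these deformation classes, by an explicit Beauville--Mukai system reached via the monodromy/parallel-transport computation of Proposition~\ref{prop:quasi SYZ} rather than by a twistor or MMP construction. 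You have also correctly isolated the two genuine obstructions to the general conjecture, namely producing a single lagrangian member in an arbitrary locally trivial deformation class and removing the $\Q$-factorial terminal hypothesis from Theorem~B, which is precisely why the general statement remains open.

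One small caveat on your reduction to primitive $L$: if $L=mL_0$ with $m>1$, your argument yields $L=f^*\sO_B(m)$, which is \emph{not} the literal conclusion $L=f^*\sO_B(1)$; since $B$ has Picard rank one (Theorem~\ref{thm:KL instead of Sch}), the two differ unless $f^*\sO_B(1)$ happens to be $m$-divisible. For type $\operatorname{K3}^{[k]}_m$ the paper shows $f^*\sO_B(1)$ is always primitive (Remark~\ref{remark:L is always primitive}), so there the reduction is harmless only because non-primitive classes cannot satisfy the conclusion at all; in general your reduction silently replaces the conjecture by the weaker ``$L$ defines a lagrangian fibration'' in the sense of Definition~\ref{defn:L induces a lagr fibr}, and you should flag that substitution.
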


If $X$ is smooth and belongs to one of the known deformation types, then the conjecture holds true, see: \cite{BM:MMP,Markman:LagrangianFibrations,Matsushita:Isotropic2017,Wieneck2016} for the $\operatorname{K3}^{[n]}$ case; \cite{Yoshioka:Bridgeland,Wieneck2018} for the $\operatorname{Kum}_n$ case; \cite{MR} for the OG6 case; and \cite{MO} for the OG10 case. Moreover, it has been proved for fourfolds satisfying some topological conditions in \cite{DHMV2024}.

\section{Moduli spaces of lagrangian fibrations}\label{section: moduli spaces of lagrangian fibrations}

The purpose of this section is to prove the following theorem.

\begin{thm}\label{thm:main result on lagr}
    For $i=1,2$, let $X_i$ be a $\Q$-factorial and terminal primitive symplectic variety and let $L_i\in\Pic(X_i)$ be a nef divisor with $q_{X_i}(L_i)=0$. Suppose that $L_1$ induces a lagrangian fibration on $X_1$. If there exists a locally trivial parallel transport operator 
    \[ \mathsf{P}\colon\oH^2(X_1,\Z)\to\oH^2(X_2,\Z) \]
    such that $\mathsf{P}(L_1)=L_2$, then $L_2$ induces a lagrangian fibration on $X_2$. Moreover, in this case $X_1$ and $X_2$ are locally trivial deformation equivalent as lagrangian fibrations.
\end{thm}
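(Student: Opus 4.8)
The plan is to argue via the moduli space of marked pairs, transporting the geometric property ``defines a lagrangian fibration'' along a path in the subspace $\fM_\ell^0$, where $\ell=\eta(L_1)$ for a suitable marking $\eta$. The first step is to fix a connected component $\fM_\Lambda^0$ containing $(X_1,\eta_1)$ for some marking $\eta_1$, and to set $\ell:=\eta_1(c_1(L_1))$. Since $\mathsf P$ is a locally trivial parallel transport operator, the pair $(X_2,\eta_2)$ with $\eta_2:=\eta_1\circ\mathsf P^{-1}$ lies in the \emph{same} connected component $\fM_\Lambda^0$, and by construction $\eta_2(c_1(L_2))=\ell$. Thus both marked pairs lie in $\fM_\ell^0$ as defined in~\eqref{eqn: M ell 0}: the class $\ell$ is isotropic, of type $(1,1)$ on both $X_1$ and $X_2$, and the orientation is the common one determined by $\fM_\Lambda^0$. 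The hypothesis that $L_1$ induces a lagrangian fibration says precisely that $(X_1,\eta_1)\in\fM_\ell^{\operatorname{lagr}}$, so in particular $\fM_\ell^{\operatorname{lagr}}\neq\emptyset$.

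The heart of the argument is then an appeal to Theorem~B (Proposition~\ref{prop:nef=lagr}): under the standing hypothesis that the varieties parametrised by $\fM_\Lambda$ are $\Q$-factorial and terminal, the nonemptiness of $\fM_\ell^{\operatorname{lagr}}$ forces the equality $\fM_\ell^{\operatorname{lagr}}=\fM_\ell^{\operatorname{nef}}$. Since $L_2$ is nef and isotropic, the pair $(X_2,\eta_2)$ belongs to $\fM_\ell^{\operatorname{nef}}$, and therefore to $\fM_\ell^{\operatorname{lagr}}$; this is exactly the statement that $L_2$ induces a lagrangian fibration $f_2\colon X_2\to B_2$ with $L_2=f_2^*\sO_{B_2}(1)$. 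This establishes the first assertion and is where the main content of the theorem is concentrated; everything before it is bookkeeping to place the two pairs in the same $\fM_\ell^0$.

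For the final claim---that $X_1$ and $X_2$ are locally trivial deformation equivalent \emph{as lagrangian fibrations} in the sense of Definition~\ref{defn:def of lagr fibr}---the plan is to use that, by Theorem~B, $\fM_\ell^{\operatorname{lagr}}$ is open and dense in the connected space $\fM_\ell^0$, hence is itself connected. One then chooses a path in $\fM_\ell^{\operatorname{lagr}}$ joining $(X_1,\eta_1)$ to $(X_2,\eta_2)$ and covers it by the universal family over the relevant Kuranishi spaces $\Deflt(X,L)$; the accompanying base spaces $B$ and the fibration morphisms $f$ can be assembled, using the local universal families of pairs $(\mathcal X_L,\mathcal L)$ from Section~\ref{section:inf Torelli}, into a locally trivial family $\mathcal X/\mathcal B\to T$ of lagrangian fibrations realising the required equivalence. \textbf{The main obstacle} I anticipate is precisely this last assembly: establishing that the fibre-wise lagrangian fibrations globalise to a genuine morphism $f\colon\mathcal X\to\mathcal B$ with $s\colon\mathcal B\to T$ projective over a neighbourhood of the path. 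This requires knowing that the base spaces $B_t$ and the line bundles $\eta_t^{-1}(\ell)$ vary in a projective family with the section rings $\bigoplus_m f_*\sO_X(mL_t)$ behaving uniformly---i.e.\ that the semiampleness producing each $f_t$ is uniform along the path---which is most naturally extracted from the construction underlying Theorem~\ref{thm:main result on lagr} rather than from its statement alone, and so I expect the detailed proof of Theorem~B to already contain the family $\mathcal X/\mathcal B$ needed here.
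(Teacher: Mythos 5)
Your proof of the main assertion is exactly the paper's: choose compatible markings via $\mathsf P$ so that $(X_1,\eta_1)$ and $(X_2,\eta_2)$ land in the same $\fM_\ell^0$, observe $\fM_\ell^{\operatorname{lagr}}\neq\emptyset$, and invoke Proposition~\ref{prop:nef=lagr} to get $\fM_\ell^{\operatorname{nef}}=\fM_\ell^{\operatorname{lagr}}$, hence $(X_2,\eta_2)\in\fM_\ell^{\operatorname{lagr}}$. For the last claim the paper likewise delegates to a connectedness-plus-gluing argument (Proposition~\ref{prop:def as lagr fibr iff existence of suitable pto}), and your anticipated ``assembly'' of the local families from Proposition~\ref{prop:higher direct images are free} is indeed how it is carried out there.

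One step in your sketch of the final claim does not hold as stated: you deduce that $\fM_\ell^{\operatorname{lagr}}$ is connected because it is open and dense in the connected space $\fM_\ell^0$. An open dense subset of a connected space need not be connected (already $\R\setminus\{0\}\subset\R$ fails), so this inference is a genuine gap. The paper's route is finer: it first reduces, via the universal families of Proposition~\ref{prop:higher direct images are free}, to the case where both marked pairs lie in the locus $\mathcal W$ of points of $\fM_\ell^{\operatorname{lagr}}$ with cyclic Picard group; it then shows (Lemma~\ref{lemma: M ell nef è connesso}, using Remark~\ref{remark:complement of hypersurfaces}) that $\fM_\ell^0\setminus\fM_\ell^{\operatorname{lagr}}$ is contained in a countable union of hypersurfaces $\mathcal Z$, so that $\mathcal W=\fM_\ell^0\setminus\mathcal Z$ is path-connected by Verbitsky's lemma. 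A path in $\mathcal W$ is then covered by finitely many Kuranishi spaces $\Deflt(Y_k,M_k)$, each carrying a locally trivial family of lagrangian fibrations, and these are glued at chosen overlap points. So the missing ingredient in your argument is precisely the control of the complement of $\fM_\ell^{\operatorname{lagr}}$ by a countable union of hypersurfaces (together with $\cP$ being a local isomorphism), which is what actually yields path-connectedness; without it your path joining $(X_1,\eta_1)$ to $(X_2,\eta_2)$ inside $\fM_\ell^{\operatorname{lagr}}$ is not known to exist.
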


The theorem is obtained from the generalisation to the singular setting of some results by Matsushita.

First, let $f\colon X\to B$ be a lagrangian fibration on a primitive symplectic variety $X$. We do not suppose yet that $X$ is $\Q$-factorial and terminal. The class $L=f^*\sO_B(1)$ is semiample by definition, hence nef, and $q_X(L)=0$ by the Fujiki relations (Proposition~\ref{prop:Fujiki}). 

With notations as in Section~\ref{section:inf Torelli}, we denote by $(\mathcal{X}_L,\mathcal{L})$ the universal Kuranishi family of the pair $(X,L)$. Moreover, we denote by $\pi_L\colon\mathcal{X}_{L}\to\Deflt(X,L)$ the projection.

\begin{prop}[\protect{\cite[Theorem~A.1, Theorem~A.2]{EFGMS:Boundedness}}]\label{prop:higher direct images are free}
    Let $f\colon X\to B$, $L$, $\pi_L$ and $\mathcal{L}$ as above. Then:
    \begin{enumerate}
        \item up to shrink $\Deflt(X,L)$, the higher direct sheaves $R^i\pi_{L*}\mathcal{L}$ are locally free for every $i\geq0$;
        \item up to shrink $\Deflt(X,L)$, there is a locally trivial deformation of lagrangian fibrations 
        \[ 
        \xymatrix{
        \mathcal{X}_L\ar[rr]^{\tilde{f}}\ar[dr]_{\pi_L} &   & \mathbb{P}(\pi_{L*}\mathcal{L}) \ar[dl]^{s_L} \\
                & \Deflt(X,L) &
    }
    \]
    such that the fibre over the reference point of $\tilde{f}$ is the lagrangian fibration $f$.
    \end{enumerate}
\end{prop}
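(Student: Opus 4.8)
The plan is to prove this exactly as \cite[Theorems~A.1 and~A.2]{EFGMS:Boundedness}, namely by first upgrading the cohomology of $L$ on the central fibre to a base-change statement over $\Deflt(X,L)$, and then spreading out the base-point-freeness of $L$ to realise the whole family as a family of lagrangian fibrations; the second step is short once one invokes Theorem~\ref{thm:KL instead of Sch}, which turns the problem into producing fibrewise a surjective morphism with connected fibres onto an $n$-dimensional normal base.

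\textbf{Local freeness (part (1)).} First I would compute the cohomology of $L$ on the reference fibre from the lagrangian structure. Since $L=f^{*}\sO_{B}(1)$, the projection formula gives $Rf_{*}L\cong\sO_{B}(1)\otimes Rf_{*}\sO_{X}$, and by the (singular analogue of the) computation of Matsushita, together with Koll\'ar-type degeneration of the Leray spectral sequence, one identifies $R^{q}f_{*}\sO_{X}$ with the sheaf of reflexive $q$-forms on $B$ (literally $\Omega_{B}^{q}$ when $B\cong\P^{n}$). Combined with the appropriate Bott-type vanishing on $B$, this yields $\oH^{i}(X,L)=0$ for $i>0$ and $\oH^{0}(X,L)\cong\C^{n+1}$. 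Because the Huybrechts--Riemann--Roch polynomial is a locally trivial deformation invariant (Theorem~\ref{RRpolynomial}), the Euler characteristic $\chi(\mathcal{X}_{L,t},\mathcal{L}_{t})$ is constant on $\Deflt(X,L)$; upper semicontinuity of cohomology then forces $\oH^{i}(\mathcal{X}_{L,t},\mathcal{L}_{t})=0$ for $i>0$ after shrinking the base, so $h^{0}$ is constant as well. Grauert's base-change theorem now gives that each $R^{i}\pi_{L*}\mathcal{L}$ is locally free and that its formation commutes with base change.

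\textbf{Spreading out the fibration (part (2)).} By part (1), $\pi_{L*}\mathcal{L}$ is locally free of rank $n+1$ and compatible with base change, so the relative evaluation map $\pi_{L}^{*}\pi_{L*}\mathcal{L}\to\mathcal{L}$ restricts on each fibre to the evaluation of the complete linear system $|\mathcal{L}_{t}|$. Over the reference point this map is surjective, since $\mathcal{L}_{0}=L$ is semiample and defines $f$ (after replacing $\mathcal{L}$ by a fixed power if $L$ itself is not globally generated). Surjectivity of a morphism of coherent sheaves is open on the total space $\mathcal{X}_{L}$, and its non-surjectivity locus is closed and $\pi_{L}$-proper; being empty over the reference point, it is empty after shrinking $\Deflt(X,L)$. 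This produces a morphism $\tilde f\colon\mathcal{X}_{L}\to\P(\pi_{L*}\mathcal{L})$ over $\Deflt(X,L)$ with $s_{L}$ projective (a $\P^{n}$-bundle), restricting to $f$ at the reference point. For each $t$ the line bundle $\mathcal{L}_{t}$ is globally generated, hence semiample and in particular nef; since $q_{X_{t}}(\mathcal{L}_{t})=0$, the Fujiki relations (Proposition~\ref{prop:Fujiki}) together with nefness force the numerical dimension of $\mathcal{L}_{t}$ to be exactly $n$ (one checks $\int \mathcal{L}_{t}^{n}\wedge\omega^{n}>0$ while $\int\mathcal{L}_{t}^{n+1}\wedge\omega^{n-1}=0$). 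Hence after Stein factorisation $\tilde f_{t}$ is a surjective morphism with connected fibres onto a normal $n$-dimensional K\"ahler space, and by Theorem~\ref{thm:KL instead of Sch} it is automatically a lagrangian fibration. As $\pi_{L}$ is a locally trivial family of primitive symplectic varieties, this exhibits a locally trivial deformation of lagrangian fibrations with central fibre $f$.

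\textbf{Main obstacle.} The delicate point is the very first step: the vanishing $\oH^{i}(X,L)=0$ for $i>0$ on the possibly singular central fibre. In the smooth case this is Matsushita's identification $R^{q}f_{*}\sO_{X}\cong\Omega_{B}^{q}$ followed by Bott vanishing, but in the singular setting one must control $R^{q}f_{*}\sO_{X}$ over a $\Q$-factorial klt base with possibly non-reduced or positive-dimensional degenerate fibres, which is precisely the technical heart of the cited appendix. Once this vanishing is available, the passage to local freeness via invariance of $\chi$ and the spreading-out of global generation are formal.
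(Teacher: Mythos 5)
The paper gives no argument for this proposition: it is quoted verbatim from \cite[Theorems~A.1 and~A.2]{EFGMS:Boundedness}, so there is no internal proof to compare against. Your sketch is, as far as it goes, a correct reconstruction of the standard argument and (as far as one can tell) of the strategy of the cited appendix: vanishing of $\oH^i(X,L)$ for $i>0$ on the central fibre, combined with the locally trivial invariance of $\chi$ (\cref{RRpolynomial}) and upper semicontinuity, gives part (1) via Grauert; the relative evaluation map, whose non-surjectivity locus is closed and proper over the base, spreads the morphism out after shrinking; and the Fujiki relations (\cref{prop:Fujiki}) applied to $(s\alpha+t\omega)^{2n}$ with $q(\alpha)=0$ do give $\int\alpha^n\omega^n>0$ and $\int\alpha^{n+1}\omega^{n-1}=0$, so each $\tilde f_t$ surjects onto an $n$-dimensional image and \cref{thm:KL instead of Sch} applies after Stein factorisation.

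The one step you cannot supply yourself --- and you correctly flag it --- is the vanishing $\oH^i(X,L)=0$ for $i>0$ on the possibly singular central fibre, i.e.\ the singular analogue of Matsushita's identification of $R^qf_*\sO_X$ with reflexive $q$-forms on $B$, together with the degeneration of the Leray spectral sequence and Bott-type vanishing on the klt base. That is precisely the technical content of \cite[Appendix~A]{EFGMS:Boundedness}, so your proposal is not an independent proof but an accurate reduction to the cited result, which is exactly the status the statement has in the paper itself. Two small points to tidy if you wanted to make this self-contained: the proposition projectivises $\pi_{L*}\mathcal{L}$ itself, so if you pass to a power of $\mathcal{L}$ to ensure global generation you should explain why this is harmless (in the situations used in this paper $B$ is $\P^n$ or a complete linear system, where $\sO_B(1)$ is very ample); and when invoking \cref{thm:KL instead of Sch} you should note explicitly that the Stein factorisation target is projective, hence a normal K\"ahler space of dimension $n$ strictly between $0$ and $2n$, so its hypotheses are met.
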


\begin{rem}
    The same result, for polarised families, has been proved by Matsushita in \cite{Matsusecret}.
\end{rem}

One of the main ingredients in the proof of Proposition~\ref{prop:higher direct images are free} is the following result, which we will need later.

\begin{lem}[\protect{\cite[Proposition~A.12]{EFGMS:Boundedness}}]\label{lemma:molto general is semiample}
    If $t\in\Deflt(X,L)$ is a very general point, then the line bundle $\mathcal{L}_t$ on $\mathcal{X}_t$ is semiample.
\end{lem}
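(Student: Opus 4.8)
The plan is to use that a \emph{very general} point of $\Deflt(X,L)$ carries a fibre of minimal Picard rank, and to propagate semiampleness from the central fibre, where $L$ already defines the lagrangian fibration $f\colon X\to B$. I will therefore first record the geometry of a very general fibre, then reduce semiampleness to a non-degeneration statement for sections.

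First I would pin down the Picard group and positivity of $\mathcal{L}_t$. By the local Torelli theorem (Proposition~\ref{prop:local Torelli}) the period of $\mathcal{X}_t$ sweeps out $\Omega(X,L)=\P(\ell^{\perp})\cap\Omega(X)$, where $\ell=c_1(L)$. Outside the countably many proper analytic subsets cut out by integral classes not proportional to $\ell$, the $(1,1)$-classes of $\mathcal{X}_t$ are exactly $\Z\ell$, so $\operatorname{NS}(\mathcal{X}_t)=\Z\mathcal{L}_t$; since $q_X(\ell)=0$ there is no integral $(1,1)$-class of positive square and $\mathcal{X}_t$ is non-projective. As $\operatorname{NS}(\mathcal{X}_t)$ has rank one with isotropic generator, there are no classes of negative square and hence no wall or exceptional divisors, so the closure of the Kähler cone meets $\operatorname{NS}(\mathcal{X}_t)_\R=\R\ell$ in the single ray $\R_{\geq0}\ell$ determined by the orientation fixed along the family (Section~\ref{section:orientation}); thus $\mathcal{L}_t$ is nef. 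Finally, applying the Fujiki relation (Proposition~\ref{prop:Fujiki}) to $\ell+s\omega$ for a Kähler class $\omega$ and comparing coefficients in $s$ yields $\int_{\mathcal{X}_t}\ell^{n}\omega^{n}\neq0=\int_{\mathcal{X}_t}\ell^{n+1}\omega^{n-1}$, so $\mathcal{L}_t$ has numerical dimension $n$ and is not big.

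The heart of the matter is to upgrade this to semiampleness. My plan is to propagate the fibration from the central fibre rather than reconstruct it on $\mathcal{X}_t$. Fix $m$ with $L^{\otimes m}$ globally generated, defining $f\colon X\to B$ with $\dim B=n$ (Theorem~\ref{thm:KL instead of Sch}). Global generation of $\mathcal{L}_t^{\otimes m}$ is an open condition \emph{provided the sections do not degenerate}, i.e.\ provided $h^0(\mathcal{X}_t,\mathcal{L}_t^{\otimes m})$ stays constant near the central point; granting this, base-point-freeness holds on a nonempty open subset of the smooth connected germ $\Deflt(X,L)$, which is therefore dense and contains every very general point. To control the sections I would combine two inputs: the Huybrechts--Riemann--Roch polynomial (Theorem~\ref{RRpolynomial}) shows $\chi(\mathcal{X}_t,\mathcal{L}_t^{\otimes k})=\RR_X(q_X(k\ell))=\RR_X(0)$ is constant in $k$ and $t$; and the Leray spectral sequence of $f$ together with Matsushita's identification $R^if_*\sO_X\cong\Omega^{[i]}_B$ pins down every $h^i(X,L^{\otimes k})$ at the central fibre. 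Deformation invariance of the individual $h^i$ then forces $h^0(\mathcal{L}_t^{\otimes m})$ to remain maximal, so the sections defining $f$ deform and cut out a fibration on $\mathcal{X}_t$.

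The hard part will be exactly this last constancy statement. Because the higher cohomology $h^i(X,L^{\otimes m})$ is genuinely nonzero for $i>0$, one cannot simply invoke Kawamata--Viehweg vanishing together with cohomology-and-base-change to deform sections, and the naive local freeness of $R^i\pi_{L*}\mathcal{L}^{\otimes m}$ is essentially the freeness of Proposition~\ref{prop:higher direct images are free} that this lemma is meant to feed into, so a direct appeal would be circular. I would therefore isolate a self-contained argument for the non-jumping of $h^0$ that does not presuppose that proposition: either an independent local-freeness argument for the direct images of the powers $\mathcal{L}^{\otimes m}$ near the central point, or a Verbitsky-style density/ergodicity argument transporting semiampleness from the central fibre to a dense set of periods. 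This is the step that genuinely uses the very-general hypothesis and where the most care is required.
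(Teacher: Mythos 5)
The first thing to say is structural: the paper does not prove this lemma at all. It is imported verbatim from \cite[Proposition~A.12]{EFGMS:Boundedness}, and the paper states explicitly that it is ``one of the main ingredients in the proof of Proposition~\ref{prop:higher direct images are free}''. So the logical order in the source is: this lemma comes \emph{first}, and the local freeness of $R^i\pi_{L*}\mathcal{L}^{\otimes k}$ together with the deformation of the fibration are deduced \emph{from} it. Your first paragraph is fine and standard, and matches the expected opening move: at very general $t$ one has $\operatorname{NS}(\mathcal{X}_t)=\Z\ell$, non-projectivity of $\mathcal{X}_t$ by the projectivity criterion, nefness of $\mathcal{L}_t$ (modulo citing a wall description of the K\"ahler cone valid in the singular setting, which is not automatic), and numerical dimension $n$ via \cref{prop:Fujiki}.

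The rest, however, is a plan with its decisive step missing, and the specific mechanism you float cannot be repaired. (i) Since $q_X(k\ell)=0$, \cref{RRpolynomial} only yields $\chi(\mathcal{X}_t,\mathcal{L}_t^{\otimes k})=\RR_X(0)=\chi(\sO_{\mathcal{X}_t})$, \emph{constant in $k$}; it carries no growth information whatsoever and is perfectly compatible with $h^0(\mathcal{X}_t,\mathcal{L}_t^{\otimes k})$ being tiny at very general $t$, with the higher $h^i$ dropping in tandem. Note that semicontinuity works against you here: each $h^i$ is upper semicontinuous, so at the very general point every $h^i$ takes its \emph{minimal} value, and the central values never bound $h^0(\mathcal{L}_t^{\otimes k})$ from below. (ii) ``Deformation invariance of the individual $h^i$'' is not a tool you may invoke: it is equivalent to the local freeness of $R^i\pi_{L*}\mathcal{L}^{\otimes k}$, i.e.\ to Proposition~\ref{prop:higher direct images are free}, which sits downstream of this lemma --- you correctly sense this circularity, but then only gesture at two unexecuted alternatives (``an independent local-freeness argument'' or ``a Verbitsky-style ergodicity argument''), which is precisely where the entire content of the lemma lives. (iii) Matsushita's identification $R^if_*\sO_X\cong\Omega^{[i]}_B$ is a theorem about \emph{smooth} total spaces with base $\P^n$; here $X$ is singular and $B$ is a priori only a $\Q$-factorial klt Fano of Picard rank one, so it cannot be quoted off the shelf. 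Finally, even granting constancy of $h^0$ near the central point, openness of base-point-freeness would give semiampleness only on a neighborhood of $0$, whereas the lemma is needed at \emph{every} very general $t\in\Deflt(X,L)$: in Section~\ref{section:M lagr dense} it is applied at a Picard-rank-one point $t$ chosen in the closure of $\Deflt(X,L)_{\operatorname{lagr}}$, and only afterwards is \cref{prop:Nakayama} run on a disc through $t$. Given the logical order in \cite{EFGMS:Boundedness}, any correct proof must avoid the section-deforming route altogether and instead produce the fibration from the intrinsic geometry of the very general fibre (non-projective, cyclic Picard group generated by a nef isotropic class); your proposal never reaches that point.
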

Let us point out that if $t\in\Deflt(X,L)$ is very general, then $\Pic(\mathcal{X}_t)$ is cyclic, generated by $\mathcal{L}_t$.

\begin{defn}\label{defn:L induces a lagr fibr}
    Let $X$ be a primitive symplectic variety and $L$ a line bundle on $X$ such that $q_X(L)=0$. Then we say that $L$ \emph{defines a lagrangian fibration} if there exists a lagrangian fibration $f\colon X\to B$ such that $f^*\sO(1)=L^k$ for some $k>0$.
\end{defn}

If $L$ is a semiample and isotropic line bundle on a primitive symplectic variety, then by Theorem \ref{thm:KL instead of Sch} it defines a lagrangian fibration.
\smallskip

Following the notation introduced in Section \ref{section:marked moduli space}, let us consider the moduli space $\fM_\ell$, where $\ell\in\Lambda$ is an isotropic class. Recall that $\fM_\ell$ parametrises marked pairs $(X,\eta)$ where $L=\eta^{-1}(\ell)$ is of type $(1,1)$.  

From now on we work with a connected component $\mathfrak{M}_\Lambda^0$ of $\fM_\Lambda$. Recall from Section \ref{section:orientation} that the choice of $\mathfrak{M}_\Lambda^0$ determines a connected component $\Omega_\ell^+$ of $\Omega_\ell$ and we put $\mathfrak{M}_\ell^0=\mathcal{P}_0^{-1}(\Omega_\ell^+)$.

Define the subsets
\[ \fM_\ell^{\operatorname{nef}}:=\left\{ (X,\eta)\in\fM_\ell^0\mid \eta^{-1}(\ell) \mbox{ is nef} \right\} \]
and 
\[ \fM_\ell^{\operatorname{lagr}}:=\left\{ (X,\eta)\in\fM_\ell^0\mid \eta^{-1}(\ell)  \mbox{ defines a lagrangian fibration} \right\}. \]

\begin{rem}\label{rmk:lagr is open in L}
    Let $f\colon X\to B$ be a lagrangian fibration and $L=f^*\sO_B(1)$. As already remarked, $L$ is semiample, hence nef. In particular $\fM_\ell^{\operatorname{lagr}}\subset \fM_\ell^{\operatorname{nef}}$.

    Notice also that, by Proposition~\ref{prop:higher direct images are free}, the space $\fM_\ell^{\operatorname{lagr}}$ is open in $\fM^0_\ell$ (possibly empty).
\end{rem}

The following is a generalisation of \cite[Theorem~3.4]{KamenovaVerbitsky:FamiliesOfLagrangianFibrations} (see also \cite[Lemma~3.4]{Matsushita:Isotropic2017}).

\begin{prop}\label{prop:nef=lagr}
   Assume that the varieties parametrised by $\fM_\Lambda$ are $\Q$-factorial and terminal, and let $\fM_\ell^{\operatorname{nef}}$ and $\fM_\ell^{\operatorname{lagr}}$ be as above.
    If $\fM_\ell^{\operatorname{lagr}}\neq\emptyset$, then $\fM_\ell^{\operatorname{lagr}}\subset\fM^0_\ell$ is open and dense. Moreover, in this case, we have an equality 
    \[ \fM_\ell^{\operatorname{lagr}}=\fM_\ell^{\operatorname{nef}}. \]
\end{prop}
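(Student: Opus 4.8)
The plan is to prove the two assertions by establishing a chain of inclusions among the three sets $\fM_\ell^{\operatorname{lagr}}$, $\fM_\ell^{\operatorname{nef}}$, and $\fM^0_\ell$, using the openness already recorded in Remark~\ref{rmk:lagr is open in L} together with the deformation theory of Proposition~\ref{prop:higher direct images are free}. Since we already know $\fM_\ell^{\operatorname{lagr}}\subset\fM_\ell^{\operatorname{nef}}\subset\fM^0_\ell$ and that $\fM_\ell^{\operatorname{lagr}}$ is open in $\fM^0_\ell$, the entire statement reduces to proving two things: first, that $\fM_\ell^{\operatorname{lagr}}$ is \emph{dense} in $\fM^0_\ell$ (which also forces density of $\fM_\ell^{\operatorname{nef}}$); and second, the reverse inclusion $\fM_\ell^{\operatorname{nef}}\subset\fM_\ell^{\operatorname{lagr}}$.

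\medskip

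For density, I would argue as follows. Suppose $(X_0,\eta_0)\in\fM_\ell^{\operatorname{lagr}}$, so $L_0=\eta_0^{-1}(\ell)$ defines a lagrangian fibration $f_0\colon X_0\to B_0$. By Proposition~\ref{prop:higher direct images are free}(2), after possibly shrinking, the Kuranishi family $\Deflt(X_0,L_0)$ carries a locally trivial family of lagrangian fibrations; in particular the locus in $\Deflt(X_0,L_0)$ where the deformed class defines a lagrangian fibration is an open neighbourhood of the reference point. Since $\Deflt(X_0,L_0)$ is smooth and maps locally isomorphically onto $\Omega(X_0,L_0)=\mathbb{P}(\ell^{\perp})\cap\Omega$ (by the Local Torelli Theorem), the period map identifies this with an open subset of $\Omega_\ell^+$. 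The key point is that the period map $\mathcal{P}_0\colon\fM_\ell^0\to\Omega_\ell^+$ is a local isomorphism and $\Omega_\ell^+$ is irreducible, so any nonempty open subset of $\fM_\ell^0$ on which $\mathcal{P}_0$ is well-understood has image dense in $\Omega_\ell^+$; I would then invoke the Global Torelli Theorem (Proposition~\ref{thm:global Torelli}), using the $\Q$-factorial and terminal hypothesis to guarantee surjectivity of $\overline{\mathcal{P}}$, to conclude that the lagrangian locus is dense. The technical care here is that density must be checked in $\fM^0_\ell$ itself, which is non-Hausdorff, so I would work on the Hausdorff reduction $\overline{\fM^0_\ell}$ where $\overline{\mathcal{P}}$ is injective, and transport density back.

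\medskip

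For the equality $\fM_\ell^{\operatorname{nef}}=\fM_\ell^{\operatorname{lagr}}$, the nontrivial inclusion is $\fM_\ell^{\operatorname{nef}}\subset\fM_\ell^{\operatorname{lagr}}$. Take $(X,\eta)\in\fM_\ell^{\operatorname{nef}}$ with $L=\eta^{-1}(\ell)$ nef and isotropic. The strategy is to deform $(X,L)$ to a very general member of its Kuranishi family $\Deflt(X,L)$: by Lemma~\ref{lemma:molto general is semiample}, at a very general point $t$ the line bundle $\mathcal{L}_t$ is semiample, hence defines a lagrangian fibration by Theorem~\ref{thm:KL instead of Sch}, so the very general point lies in $\fM_\ell^{\operatorname{lagr}}$. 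Now I would use the openness of $\fM_\ell^{\operatorname{lagr}}$ together with a specialisation argument: the lagrangian locus is open, and having just shown it is dense, its complement $\fM^0_\ell\setminus\fM_\ell^{\operatorname{lagr}}$ is closed with empty interior. To promote this to the statement that every \emph{nef} point is lagrangian, I would appeal to Theorem~\ref{thm:main result on lagr} (whose proof this proposition feeds, so care is needed to avoid circularity — more precisely I expect the logical order to be that this Proposition is used to prove that theorem, and here I use only the deformation-invariance of nefness): along the connected family, nefness of the class is preserved under the parallel transport from a very general lagrangian member back to $(X,L)$, and a nef isotropic class with the same orientation, lying over $\Omega_\ell^+$, that is a parallel transport of a lagrangian class must itself be lagrangian.

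\medskip

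The main obstacle I anticipate is the reverse inclusion $\fM_\ell^{\operatorname{nef}}\subset\fM_\ell^{\operatorname{lagr}}$, specifically ruling out the possibility that a nef isotropic class fails to be semiample (and hence fails to define a fibration) on some special non-Hausdorff fibre, even though its very general deformation is semiample. In the smooth case Kamenova--Verbitsky and Matsushita handle this using that nef isotropic classes on the known deformation types are automatically movable/semiample via the structure of the K\"ahler cone and rational curves; in the singular $\Q$-factorial terminal setting the analogue requires the MMP-type input that a nef class on the boundary of the movable cone with vanishing square is semiample. I would isolate this as the crux and expect to need the terminal and $\Q$-factorial hypotheses precisely to control the birational geometry — ensuring that no prime exceptional divisor (Definition~\ref{defn:pex}) obstructs semiampleness, so that nefness and the lagrangian property genuinely coincide.
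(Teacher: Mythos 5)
There is a genuine gap, in fact two, and both trace back to the same missing tool. First, your density argument does not work as stated: from openness of $\fM_\ell^{\operatorname{lagr}}$ and the fact that $\mathcal{P}_0$ is a local isomorphism you can only conclude that the image of $\fM_\ell^{\operatorname{lagr}}$ is a nonempty \emph{analytically} open subset of $\Omega_\ell^+$, and such a subset of a connected complex manifold need not be dense (think of a small ball); irreducibility of $\Omega_\ell^+$ buys density only for Zariski-open or co-countable-analytic subsets. The paper instead proves density pointwise: it shows that any Picard-rank-one point $t$ in the closure of $\Deflt(X,L)_{\operatorname{lagr}}$ actually lies in $\Deflt(X,L)_{\operatorname{lagr}}$, so that the complement of the lagrangian locus is contained in the countable union of Noether--Lefschetz hypersurfaces (Remark~\ref{remark:complement of hypersurfaces}), which \emph{is} a genuinely dense complement. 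Second, and more seriously, the reverse inclusion $\fM_\ell^{\operatorname{nef}}\subset\fM_\ell^{\operatorname{lagr}}$ is not actually proved in your proposal: the sentence ``a nef isotropic class \dots that is a parallel transport of a lagrangian class must itself be lagrangian'' is precisely the statement to be established, and the MMP-style route you isolate as the crux (semiampleness of nef isotropic classes on the boundary of the movable cone) is neither carried out nor the route the paper takes.

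The missing idea, used for both halves, is a degeneration-of-sections argument rather than a birational-geometry one. Over a small disc $\Delta$ through the point in question with $\Delta\setminus\{0\}$ in the lagrangian locus, Proposition~\ref{prop:Nakayama} (Nakayama's results on quasi-nef line bundles, combined with Lemma~\ref{lemma:canonical bundle trivial} on the triviality of $\omega_{\mathcal{X}}$) gives that $\pi_*\mathcal{L}^{\otimes k}$ is locally free with fibres computing $\oH^0(\mathcal{X}_t,\mathcal{L}_t^{\otimes k})$; the only hypotheses needed on the central fibre are that $\mathcal{L}_0$ is nef. Hence $h^0(X,L^{\otimes k})=h^0(\P^n,\sO(k))$ for all $k$, and Matsushita's criterion (Lemma~\ref{lemma:semiample if and only if nef +}: on a $\Q$-factorial terminal primitive symplectic variety, $L$ defines a lagrangian fibration if and only if it is nef and has the section growth of $\sO_{\P^n}(1)$) converts nefness directly into a lagrangian fibration, with no appeal to semiampleness of nef boundary classes. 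This is where the $\Q$-factorial and terminal hypotheses actually enter --- through Matsushita's criterion --- not through control of prime exceptional divisors. Your correct observations (openness via Proposition~\ref{prop:higher direct images are free}, semiampleness at very general points via Lemma~\ref{lemma:molto general is semiample}) are the starting points of the paper's proof, but without the base-change statement of Proposition~\ref{prop:Nakayama} and the numerical criterion of Lemma~\ref{lemma:semiample if and only if nef +} neither the density nor the equality can be closed.
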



\subsection{Preparation for the proof of Proposition~\ref{prop:nef=lagr}}

In this section, we collect some results that will be useful in the proof of Proposition~\ref{prop:nef=lagr}.


\begin{lem}\label{lemma:canonical bundle trivial}
    Let $p\colon\mathcal{X}\to\Delta$ be a locally trivial family of primitive symplectic varieties over the unit disc. Then the canonical bundle of $\mathcal{X}$ is trivial.
\end{lem}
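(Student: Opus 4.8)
The plan is to show that the canonical bundle $K_{\mathcal{X}}$ of the total space is trivial, using that each fibre $\mathcal{X}_t$ is a symplectic variety (hence has trivial canonical bundle, $K_{\mathcal{X}_t}\cong\sO_{\mathcal{X}_t}$) and that the base $\Delta$ is contractible. First I would observe that since $p$ is a locally trivial family of primitive symplectic varieties, each fibre $\mathcal{X}_t$ carries a symplectic form, and on the regular locus $(\mathcal{X}_t)_{\operatorname{reg}}$ the top exterior power of this $2$-form trivialises the canonical bundle; by reflexivity and normality this extends to give $\omega_{\mathcal{X}_t}\cong\sO_{\mathcal{X}_t}$ (the reflexive canonical sheaf is trivial). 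The key structural input is local triviality: near each point the family looks like a product $U_x\times V_{f(x)}$, so the relative canonical sheaf $\omega_{\mathcal{X}/\Delta}$ is locally a pullback from the fibre and hence is itself trivial on each fibre.

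Next I would set up the comparison between $\omega_{\mathcal{X}}$ and $\omega_{\mathcal{X}/\Delta}$. Since $\Delta$ is smooth one-dimensional, there is the standard relation $\omega_{\mathcal{X}}\cong\omega_{\mathcal{X}/\Delta}\otimes p^*\omega_{\Delta}$ (as reflexive sheaves, using that $p$ is flat and the fibres are normal with rational singularities, so relative duality applies). Because $\Delta$ is the unit disc, $\omega_{\Delta}$ is trivial, so it suffices to prove $\omega_{\mathcal{X}/\Delta}\cong\sO_{\mathcal{X}}$. The strategy for this is to produce a nowhere-vanishing section: the fibrewise symplectic forms $\sigma_t$ should be assembled into a relative symplectic form $\sigma\in\oH^0(\mathcal{X},\Omega^{[2]}_{\mathcal{X}/\Delta})$, whose top power $\sigma^{\wedge n}$ (where $2n=\dim\mathcal{X}_t$) gives a nowhere-vanishing reflexive section of $\omega_{\mathcal{X}/\Delta}$, trivialising it.

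The technical heart is therefore to show that the symplectic forms on the fibres glue to a single relative form and that this can be done without vanishing, i.e.\ that the Hodge-theoretic line $\oH^0(\mathcal{X}_t,\Omega^{[2]}_{\mathcal{X}_t})=\C\sigma_t$ (primitivity, Definition~\ref{defn:symplectic}) varies in a locally free, nowhere-vanishing way over $\Delta$. Here I would use that for a locally trivial family the sheaf $p_*\Omega^{[2]}_{\mathcal{X}/\Delta}$ is locally free of rank $1$ (each fibre contributes a one-dimensional space by the primitive symplectic condition, and local triviality guarantees no jumping), so after shrinking $\Delta$ it is free; a generating section restricts to a nonzero multiple of $\sigma_t$ on each fibre and hence is everywhere non-degenerate on $(\mathcal{X}_t)_{\operatorname{reg}}$.

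I expect the main obstacle to be the careful bookkeeping of reflexive sheaves on the singular total space: one must justify the adjunction relation $\omega_{\mathcal{X}}\cong\omega_{\mathcal{X}/\Delta}\otimes p^*\omega_\Delta$ and the local freeness of $p_*\Omega^{[2]}_{\mathcal{X}/\Delta}$ in the category of reflexive sheaves, where ordinary exterior powers must be replaced by their double duals and one cannot naively intersect with singular loci. Local triviality is precisely the tool that resolves this: it lets one reduce every such statement to the corresponding statement on a single fibre times a disc, where the product structure makes the isomorphisms transparent. The normality of $\mathcal{X}$ (its singular locus has codimension $\geq2$ in each fibre, and hence in the total space) then lets one extend the trivialising section found on the regular locus across the singularities, completing the argument.
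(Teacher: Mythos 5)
Your proof is correct and follows essentially the same route as the paper's: reduce to the smooth locus $\mathcal{X}_{\operatorname{sm}}\to\Delta$ of the family (whose complement has codimension $\geq 2$ in the normal, Gorenstein total space), split $\omega_{\mathcal{X}}\cong\omega_{\mathcal{X}/\Delta}\otimes p^*\omega_\Delta$ via the relative tangent sequence, trivialise the relative canonical using the fibrewise symplectic structure, and extend across the singular locus. The only difference is one of emphasis: you spell out the assembly of the fibrewise symplectic forms into a relative form trivialising $\omega_{\mathcal{X}/\Delta}$, a step the paper leaves implicit in its appeal to the relative tangent sequence.
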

\begin{proof}
    First of all, let us remark that $\mathcal{X}$ is normal and Gorenstein. In fact, being $p\colon\mathcal{X}\to\Delta$ locally trivial, both properties follow from the fact that the fibres of $p$ are normal and Gorenstein, and $\Delta$ is smooth.
    Therefore it is enough to exhibit a dense open subset of $\mathcal{X}$ whose boundary has codimension at least $2$ and whose canonical bundle is trivial.

    Since $p\colon\mathcal{X}\to\Delta$ is locally trivial, there is a smooth fibration $p_0\colon\mathcal{X}_{\operatorname{sm}}\to\Delta$ whose fibres are the smooth loci of the fibres of $p$. Clearly $\mathcal{X}_{\operatorname{sm}}$ is a dense open subset of $\mathcal{X}$ with boundary of codimension at least $2$. Moreover, being $p_0$ smooth, the triviality of the canonical bundle follows from the relative tangent short exact sequence of $p_0$.
\end{proof}

\begin{prop}\label{prop:Nakayama}
    Let $p\colon\mathcal{X}\to\Delta$ be a locally trivial family of primitive symplectic varieties over the unit disc. Suppose that there exists a line bundle $\mathcal{L}$ on $\mathcal{X}$, flat over $\Delta$. If
    \begin{enumerate}
        \item $\mathcal{L}_t$ is semiample for every $t\neq0$; and
        \item $\mathcal{L}_0$ is quasi-nef,
    \end{enumerate}
    then $R^ip_*\mathcal{L}^k$ is locally free for every $i\geq0$ and every $k\geq1$.

    Moreover, the natural morphism
    \[ R^ip_*\mathcal{L}^k\otimes k(0)\to \oH^i(\mathcal{X}_0,\mathcal{L}_0^k) \]
    is an isomorphism for every $i\geq0$ and every $k\geq1$.
\end{prop}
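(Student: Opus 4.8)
The plan is to reduce everything to a statement about the integers $h^i(\mathcal X_t,\mathcal L_t^k):=\dim_\C\oH^i(\mathcal X_t,\mathcal L_t^k)$ and then invoke Grauert's cohomology-and-base-change theorem. Concretely, it suffices to prove that for every $i\ge 0$ and every $k\ge 1$ the function $t\mapsto h^i(\mathcal X_t,\mathcal L_t^k)$ is constant on $\Delta$: once this is established, Grauert's theorem yields at one stroke that $R^ip_*\mathcal L^k$ is locally free and that the base-change map $R^ip_*\mathcal L^k\otimes k(t)\to\oH^i(\mathcal X_t,\mathcal L_t^k)$ is an isomorphism at every $t$, in particular at $0$. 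Since $\Delta$ is a smooth connected curve, each coherent sheaf $R^ip_*\mathcal L^k$ is locally free on a dense open subset, so $h^i$ takes a constant generic value $h^i_{\mathrm{gen}}$ there, and the whole problem reduces to comparing the central fibre with this generic value.

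Two inputs are immediate. First, by flatness of $\mathcal L^k$ over $\Delta$ and properness of $p$, the Euler characteristic $\chi(\mathcal X_t,\mathcal L_t^k)$ is independent of $t$, so $\sum_i(-1)^i h^i(\mathcal X_0,\mathcal L_0^k)=\sum_i(-1)^i h^i_{\mathrm{gen}}$. Second, upper semicontinuity of cohomology gives $h^i(\mathcal X_0,\mathcal L_0^k)\ge h^i_{\mathrm{gen}}$ for all $i$. The statement is therefore equivalent to the reverse inequalities $h^i(\mathcal X_0,\mathcal L_0^k)\le h^i_{\mathrm{gen}}$ for all $i$ and all $k\ge1$; the semiampleness of $\mathcal L_t$ for $t\neq 0$ enters here to pin down the generic value $h^i_{\mathrm{gen}}$, via the fibration it defines (Theorem~\ref{thm:KL instead of Sch}) and the resulting vanishing, so that these reverse inequalities can be tested on the central fibre.

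The reverse inequalities are the heart of the matter, and this is where Lemma~\ref{lemma:canonical bundle trivial} and the quasi-nef hypothesis on $\mathcal L_0$ come in. By that lemma the total space $\mathcal X$ has trivial canonical bundle; as $p$ is locally trivial with symplectic fibres, $\mathcal X$ and every fibre $\mathcal X_t$ are normal with rational singularities---hence Cohen--Macaulay---and, the canonical bundle being trivial, Gorenstein with trivial dualizing sheaf. Serre--Grothendieck duality then gives, writing $2n=\dim\mathcal X_t$, an isomorphism $\oH^i(\mathcal X_0,\mathcal L_0^k)\cong\oH^{2n-i}(\mathcal X_0,\mathcal L_0^{-k})^\vee$, which converts the desired upper bounds into vanishing, in the appropriate cohomological range, for the negative powers $\mathcal L_0^{-k}$. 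The plan is to obtain these vanishings by transporting Nakayama's argument from \cite{NAKAYAMA:LowerSemiContinuity} to the singular setting: the quasi-nefness of $\mathcal L_0$ is precisely the positivity needed to run a Kawamata--Viehweg--Nakayama type vanishing on the central fibre, and, matched against the generic value $h^i_{\mathrm{gen}}$, it forces $h^i(\mathcal X_0,\mathcal L_0^k)=h^i_{\mathrm{gen}}$ for all $i$ and $k$.

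The main obstacle is exactly this last vanishing on $\mathcal X_0$: one only knows that $\mathcal L_0$ is quasi-nef, not semiample, so the lagrangian fibration that controls the cohomology of a general fibre is unavailable on the central fibre and the dimensions cannot simply be read off. Overcoming this requires that the vanishing theorems underlying Nakayama's lower-semicontinuity argument remain valid for quasi-nef line bundles on the singular varieties at hand; this is what the triviality of the canonical bundle, together with the rational, Cohen--Macaulay, Gorenstein nature of symplectic singularities, secures, allowing the duality and vanishing statements of the smooth case to be carried over. Granting this, the constancy of all $h^i(\mathcal X_t,\mathcal L_t^k)$ follows, and the local freeness of $R^ip_*\mathcal L^k$ together with the base-change isomorphism at $0$ are then formal consequences of Grauert's theorem.
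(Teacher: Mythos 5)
Your overall skeleton (reduce to constancy of $t\mapsto h^i(\mathcal{X}_t,\mathcal{L}_t^k)$, then apply Grauert; use Lemma~\ref{lemma:canonical bundle trivial} and the Gorenstein/rational nature of symplectic singularities to make duality available) is reasonable and close in spirit to the paper, and you correctly isolate the heart of the matter as the inequality $h^i(\mathcal{X}_0,\mathcal{L}_0^k)\le h^i_{\mathrm{gen}}$. But the mechanism you propose for that inequality does not work. Serre duality turns the desired bound into $h^{2n-i}(\mathcal{X}_0,\mathcal{L}_0^{-k})\le h^i_{\mathrm{gen}}$, and this is a \emph{vanishing} statement only in the degrees where $h^i_{\mathrm{gen}}=0$. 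For a lagrangian fibration $f\colon X\to\P^n$ with $L=f^*\sO(1)$ the generic cohomology does not vanish below the middle: one has contributions such as $\oH^0(\P^n,\Omega^i_{\P^n}(k))\subseteq\oH^i(X,L^k)$ for $k>i$, so $h^i_{\mathrm{gen}}\neq0$ for a whole range $0\le i\le n$. Moreover, the only vanishing available for a nef, non-big line bundle is of Kawamata type, $\oH^i(X,\omega_X\otimes L^k)=0$ for $i>\dim X-\nu(L)=n$, which covers exactly the degrees above the middle and nothing else. So ``running a Kawamata--Viehweg--Nakayama type vanishing on the central fibre'' cannot produce the required upper bounds in degrees $i\le n$, and the proof is incomplete precisely where the work is.

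What is actually needed — and what the paper uses — is not a fibrewise vanishing theorem but Nakayama's \emph{relative} statement: after passing to the model provided by \cite[Proposition~2.17]{NAKAYAMA:LowerSemiContinuity} (a birational modification $Y\to\mathcal{X}$ fibred over a projective family $Z\to\Delta$ carrying a divisor that is nef on the fibres over $t\neq0$ and pulls back to $\mathcal{L}$), the argument of \cite[Corollary~3.14]{NAKAYAMA:LowerSemiContinuity} — a Koll\'ar-type torsion-freeness/local-freeness theorem for higher direct images of $\omega\otimes(\text{nef})$ — yields directly that $R^ip_*(\omega_{\mathcal{X}}\otimes\mathcal{L})$ is locally free; triviality of $\omega_{\mathcal{X}}$ then removes the twist, and replacing $\mathcal{L}$ by $\mathcal{L}^k$ gives all $k\ge1$, with base change following formally once all the $R^ip_*$ are locally free. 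Your write-up gestures at Nakayama but recasts his input as a vanishing theorem on $\mathcal{X}_0$, which is a genuinely different (and insufficient) statement; to repair the proof you should replace the Serre-duality-plus-vanishing step by the relative local-freeness theorem itself.
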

Recall that a line bundle on $X$ is \emph{quasi-nef} if there exists a resolution of singularities $f\colon\widetilde X\to X$ such that $f^*L$ is nef. It is nowadays known that $L$ is quasi-nef if and only if it is nef (see \cite[Lemma~2.38]{DHP:MMP4Complex}), but we keep the same terminology as in \cite{NAKAYAMA:LowerSemiContinuity} for coherency.
\begin{proof}
    By \cite[Proposition~2.17]{NAKAYAMA:LowerSemiContinuity}, up to shrink $\Delta$, there exists a commutative diagram
    \[ 
    \xymatrix{
    \mathcal{X}\ar[d]^{p} & Y\ar[l]^{f}\ar[d]_{h} \\
    \Delta & Z\ar[l]_{g}
    }
    \]
    where 
    \begin{itemize}
        \item $Y$ and $Z$ are smooth complex varieties,
        \item $f$ is proper and birational, $h$ is proper with connected fibres and $g$ is projective,
        \item there exists a divisor $H$ on $Z$ such that
        \begin{itemize}
            \item $H_t$ is nef for any $t\neq0$;
            \item $g^*H=p^*\mathcal{L}$.
        \end{itemize}
    \end{itemize} 
    Arguing now as in the second part of the proof of \cite[Corollary~3.14]{NAKAYAMA:LowerSemiContinuity}, it follows that $R^ip_*\left(\omega_{\mathcal{X}}\otimes\mathcal{L}\right)$ is locally free for every $i\geq0$. The first part of the claim then follows from Lemma~\ref{lemma:canonical bundle trivial} and by replacing $\mathcal{L}$ by $\mathcal{L}^k$ for every $k\geq1$.

    Finally, the last part of the statement follows from the Andreotti--Grauert Theorem.
\end{proof}

Next, let us recall the following result of Matsushita, see \cite[Lemma 3.1]{Matsushita:Deformations2016}. Notice that its proof applies verbatim to the singular case.

\begin{lem}[\protect{\cite[Lemma 3.1]{Matsushita:Deformations2016}}]\label{lemma:semiample if and only if nef +}
    Let $X$ be a terminal and $\Q$-factorial primitive symplectic variety of dimension $2n$, and let $L$ be a line bundle on $X$. Then $L$ defines a lagrangian fibration if and only if $L$ is nef and for every $k\geq0$ we have 
    \[ \dim\oH^0(X,L^{\otimes k})=\dim\oH^0(\P^n,\sO_{\P^n}(k)). \]
\end{lem}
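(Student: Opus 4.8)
The plan is to prove the two implications separately, in each case reducing to the structural result \cref{thm:KL instead of Sch}. The key observation, already recorded after \cref{defn:L induces a lagr fibr}, is that a \emph{semiample} and isotropic line bundle on a primitive symplectic variety automatically defines a lagrangian fibration: one takes the Stein factorisation of the morphism associated to $|L^{\otimes m}|$ for $m\gg0$ and applies \cref{thm:KL instead of Sch}. Thus the real content is the equivalence between ``$L$ semiample and isotropic'' and ``$L$ nef with the prescribed dimension count,'' and I would organise everything around establishing semiampleness.

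For the implication $(\Rightarrow)$, suppose $L$ defines a lagrangian fibration $f\colon X\to B$ with $L=f^{*}\sO_{B}(1)$. Then $L$ is semiample, hence nef, and $q_X(L)=0$ by the Fujiki relations (\cref{prop:Fujiki}). For the dimension count I would use that the base is $B\cong\P^{n}$, as provided by \cref{thm:KL instead of Sch} for the fibrations under consideration. Since $f$ has connected fibres onto the normal variety $B$ we have $f_{*}\sO_X=\sO_{B}$, so the projection formula yields
\[ \oH^{0}(X,L^{\otimes k})=\oH^{0}(B,\sO_{B}(k))=\oH^{0}(\P^{n},\sO_{\P^{n}}(k)) \]
for every $k\geq0$, which is exactly the claimed equality.

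For the implication $(\Leftarrow)$, assume $L$ is nef and $\dim\oH^{0}(X,L^{\otimes k})=\binom{n+k}{n}$ for all $k\geq0$. I would first read off two numerical facts. As the right-hand side is a polynomial of degree exactly $n$ in $k$, the Iitaka dimension is $\kappa(X,L)=n$. Moreover nefness gives $q_X(L)\geq0$, because the nef cone lies in the closure of the positive cone; and if $q_X(L)>0$ then $\int_X L^{2n}=C\,q_X(L)^{n}>0$ by \cref{prop:Fujiki}, so $L$ would be nef and big and $\dim\oH^{0}(X,L^{\otimes k})$ would grow like $k^{2n}$, contradicting the degree $n$ growth. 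Hence $q_X(L)=0$. Fixing a K\"ahler class $\omega$ and expanding the Fujiki relation in polarised form (exactly as in the proof of \cref{lem:restrictionVoisinsingular}), one gets $\int_X L^{n}\smile\omega^{n}\neq0$ while $\int_X L^{\,n+1}\smile\omega^{\,n-1}=0$, since every matching of the $n+1$ factors $L$ against the $n-1$ factors $\omega$ must contract two copies of $L$, and $q_X(L)=0$. Together with the general inequality $\nu\geq\kappa$ for nef classes, this gives $\nu(X,L)=\kappa(X,L)=n$.

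The heart of the argument, and the step I expect to be the main obstacle, is to upgrade this to semiampleness of $L$. Here $X$ is terminal and $\Q$-factorial with $K_X$ trivial, and $L$ is a nef class with $\kappa(X,L)=\nu(X,L)$: this is precisely the setting of abundance in the $\kappa=\nu$ case. In the projective setting this is classical, and in the K\"ahler setting needed here it follows from the minimal model program for K\"ahler spaces (cf.\ \cite{DHP:MMP4Complex}); this is exactly where the singular hypotheses of terminality and $\Q$-factoriality enter and where the smooth inputs of Matsushita's original argument are replaced by their singular counterparts, so that his proof carries over verbatim. Granting semiampleness, $L$ is semiample and isotropic and therefore defines a lagrangian fibration by the remark following \cref{defn:L induces a lagr fibr}, which completes the proof.
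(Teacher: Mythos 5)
Your overall strategy coincides with the paper's: the paper's proof of this lemma is a single sentence asserting that Matsushita's argument for \cite[Lemma~3.1]{Matsushita:Deformations2016} carries over verbatim, and your reconstruction of the backward direction is exactly that argument --- deduce $q_X(L)=0$ from nefness and the degree-$n$ growth of sections, compute $\nu(X,L)=\kappa(X,L)=n$ from the polarised Fujiki relations, and invoke abundance for nef and abundant divisors on a variety with trivial canonical class (Kawamata in the projective case, \cite{DHP:MMP4Complex} in the K\"ahler case) to obtain semiampleness, after which \cref{thm:KL instead of Sch} produces the fibration. This part is sound and correctly isolates where the terminal and $\Q$-factorial hypotheses enter.

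The weak point is the forward direction. You claim $B\cong\P^{n}$ ``as provided by \cref{thm:KL instead of Sch}'', but that theorem yields $B\cong\P^{n}$ only when $X$ is \emph{irreducible} symplectic \emph{and} $B$ is smooth; under the lemma's hypotheses ($X$ primitive symplectic, terminal and $\Q$-factorial) it gives only that $B$ is a $\Q$-factorial projective klt variety of Picard rank one. What your projection-formula computation actually needs is $h^{0}(B,\sO_{B}(k))=\binom{n+k}{n}$, and this does not follow from Picard rank one alone: one needs either the identification of $B$ with $\P^{n}$, or vanishing of the higher cohomology of $\sO_{B}(k)$ together with control of the Hilbert polynomial of $(B,\sO_{B}(1))$ (in the smooth case this is Hwang's theorem, resp.\ Matsushita's computation of $R^{i}f_{*}\sO_X$; in the singular setting it is supplied for the fibrations actually occurring in the paper, whose bases are the projective spaces $\P(\pi_{L*}\mathcal{L})_t$ of \cref{prop:higher direct images are free}). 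You should either add this input explicitly or restrict the forward implication to fibrations over $\P^{n}$. A second, smaller mismatch: \cref{defn:L induces a lagr fibr} only requires $f^{*}\sO_B(1)=L^{k}$ for some $k>0$, whereas your argument (and the dimension count in the statement itself) silently forces $k=1$; this is a wrinkle of the paper's conventions rather than an error on your part, but it deserves a remark.
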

\begin{proof}
    The proof goes as the proof of \cite[Lemma 3.1]{Matsushita:Deformations2016}. 

\end{proof}

Finally, we will need the following generalisation of \cite[Lemma~4.4]{Markman:LagrangianFibrations}. 

\begin{prop}\label{prop:M ell 0 is path connected}
    Assume that the varieties parametrised by $\fM_\Lambda$ are $\Q$-factorial and terminal. Then the space $\mathfrak{M}_\ell^0$ is path-connected.
\end{prop}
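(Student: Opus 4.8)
The plan is to run Markman's period-theoretic argument in the present non-Hausdorff setting, transferring path-connectedness from the period domain to the moduli space via the Global Torelli Theorem. Recall from Section~\ref{section:orientation} that $\fM_\ell^0=\mathcal{P}_0^{-1}(\Omega_\ell^+)$, where $\mathcal{P}_0$ is the restriction of the period map to the fixed connected component $\fM^0_\Lambda$, and $\Omega_\ell^+$ is one of the two connected components of $\Omega_\ell=\Omega_\Lambda\cap\ell^\perp$. Since $[\ell]\notin\Omega_\Lambda$ the hyperplane section $\Omega_\ell$ is a smooth complex manifold, so $\Omega_\ell^+$ is a connected complex manifold of positive dimension; in particular it is path-connected and locally path-connected. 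I would first isolate its general locus $U\subset\Omega_\ell^+$ of Mumford--Tate general periods. By Proposition~\ref{thm:global Torelli}(1)--(2), $U$ is the complement in $\Omega_\ell^+$ of a countable union of proper closed analytic subvarieties $v^{\perp}\cap\Omega_\ell^+$ with $v\in\Lambda\setminus\Q\ell$ (the loci along which the period acquires an extra integral $(1,1)$-class), each of real codimension at least $2$. As $\Omega_\ell^+$ is connected of real dimension at least $2$, removing such a set preserves path-connectedness, so $U$ is path-connected.

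Next I would pass to the moduli space through the restriction $\mathcal{P}_\ell:=\mathcal{P}_0|_{\fM_\ell^0}\colon\fM_\ell^0\to\Omega_\ell^+$. Because $\fM_\ell^0=\mathcal{P}_0^{-1}(\Omega_\ell^+)$ and $\mathcal{P}_0$ is a local isomorphism by the Local Torelli Theorem (Proposition~\ref{prop:local Torelli}(2)), the map $\mathcal{P}_\ell$ is again a local isomorphism. The $\Q$-factoriality and terminality hypothesis lets me invoke Proposition~\ref{thm:global Torelli}(3): $\overline{\mathcal{P}}$ is surjective, hence so is $\mathcal{P}_0$, and therefore $\mathcal{P}_\ell$ is onto $\Omega_\ell^+$. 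Over $U$ the map $\mathcal{P}_0$ is injective by Proposition~\ref{thm:global Torelli}(1), so $\mathcal{P}_\ell$ restricts to a continuous bijection $\mathcal{P}_\ell^{-1}(U)\to U$; being an open map it is a homeomorphism, and consequently $\mathcal{P}_\ell^{-1}(U)$ is path-connected.

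Finally I would connect an arbitrary $(X,\eta)\in\fM_\ell^0$ to this general part. Using that $\mathcal{P}_\ell$ is a local homeomorphism, choose a neighbourhood $W\ni(X,\eta)$ in $\fM_\ell^0$ mapped homeomorphically onto an open $V\subset\Omega_\ell^+$. Since $U$ is dense, pick $q\in V\cap U$; its preimage $\tilde q:=(\mathcal{P}_\ell|_W)^{-1}(q)$ is a point over the general period $q$, hence the unique such point by injectivity over $U$, so $\tilde q\in\mathcal{P}_\ell^{-1}(U)$. A path in $V$ from $\mathcal{P}_\ell(X,\eta)$ to $q$ lifts through $\mathcal{P}_\ell|_W$ to a path in $W\subset\fM_\ell^0$ joining $(X,\eta)$ to $\tilde q$. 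As every point is thus joined to the path-connected set $\mathcal{P}_\ell^{-1}(U)$, the space $\fM_\ell^0$ is path-connected.

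The one genuinely delicate point is the non-Hausdorff nature of $\fM_\Lambda$: over a special period the fibre of $\mathcal{P}_0$ may contain several mutually inseparable marked pairs, and one must ensure that each such pair can still be reached by a path within $\fM_\ell^0$. This is precisely what the last step handles --- each such pair has its own neighbourhood homeomorphic to an open subset of $\Omega_\ell^+$ meeting the dense general locus $U$, so it is joined to the unique-preimage region without ever leaving $\fM_\ell^0$, and no control over the global geometry of the special fibres is required.
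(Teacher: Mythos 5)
Your overall strategy is the same as the paper's (which follows Markman's argument): transfer path-connectedness from $\Omega_\ell^+$ to $\fM_\ell^0$ by showing the period map is injective over a dense, path-connected locus of Picard-rank-one periods, and then join an arbitrary point to that locus through a local-Torelli chart. However, there is a genuine gap at the single most important step, namely the injectivity. You justify ``over $U$ the map $\mathcal{P}_0$ is injective'' by citing part (1) of the Global Torelli Theorem (\cref{thm:global Torelli}), which asserts bijectivity of $\cP$ over \emph{Mumford--Tate general} points of $\Omega_\Lambda$. No point of $\Omega_\ell^+$ is Mumford--Tate general in that sense: every period on the hyperplane $\ell^\perp$ carries the nontrivial integral $(1,1)$-class $\ell$, so the Mumford--Tate general locus of $\Omega_\Lambda$ is disjoint from $\Omega_\ell$. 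The set $U$ you actually work with is the locus of periods in $\Omega_\ell^+$ whose Picard lattice is exactly $\Z\ell$, and for such periods the cited theorem gives no injectivity statement. Without injectivity over $U$, the preimage $\mathcal{P}_\ell^{-1}(U)$ could a priori be a disjoint union of several sheets (the preimage of a connected set under a local homeomorphism need not be connected), and the whole argument collapses.

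This is precisely the point where the paper does real work: Lemma~\ref{lemma:cycli pic is iso} shows that two marked pairs in the same connected component, both $\Q$-factorial and terminal, with cyclic Picard group generated by an isotropic class and with the same period, must coincide. Its proof is not formal --- it produces a bimeromorphic map from \cite[Theorem~6.14]{BakkerLehn2022}, uses the rigidity lemma and an analysis of exceptional divisors and BBF-duals of contracted curves to upgrade it to an isomorphism, and then invokes $\Aut_{\Hdg}(\oH^2(X,\Z))=\pm\operatorname{id}$ (\cref{lem:automorphismsParabolicHodgeStructure}) to identify the markings. Note also that this is where the $\Q$-factorial and terminal hypotheses are genuinely used; in your write-up they only enter through the surjectivity of $\overline{\cP}$, which is a sign that the hard part has been bypassed. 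To repair your proof you would need to supply the content of Lemma~\ref{lemma:cycli pic is iso} (or an equivalent separation statement for fibres of $\mathcal{P}_\ell$ over Picard-rank-one periods); the remaining topological steps are fine.
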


In order to prove the proposition, we need two more remarks.
First of all, the following remark is essentially \cite[Theorem~2.4.(5)]{OguisoBimeromorphisAutomorphismGroups}. We reproduce the proof for the reader's convenience. 

\begin{lem}\label{lem:automorphismsParabolicHodgeStructure}
    Let $(X,\eta)$ be a marked pair such that $\Pic(X)=\Z L$ with $q_X(L)=0$. Then $\operatorname{Aut}_{\mathrm{Hdg}}(\oH^2(X,\Z)_{\operatorname{tf}})=\pm\operatorname{id}$
\end{lem}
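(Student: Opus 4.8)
The plan is to exploit the rank-one Picard condition together with the isotropy of $L$ to pin down any Hodge isometry on the transcendental part. Write $g\in\operatorname{Aut}_{\mathrm{Hdg}}(\oH^2(X,\Z))$. Since $g$ preserves the Hodge decomposition and the integral structure, it preserves the subgroup of integral $(1,1)$-classes, which by hypothesis is $\Z L$; as this has rank one, $g(L)=\epsilon L$ with $\epsilon\in\{\pm1\}$. The entire proof then reduces to showing that $g$ acts by the \emph{same} sign $\epsilon$ on the transcendental Hodge structure.

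First I would record the rational Hodge-theoretic splitting. The category of polarisable rational Hodge structures is semisimple, and the space of rational $(1,1)$-classes of $\oH^2(X,\Q)$ equals $\Pic(X)\otimes\Q=\Q L$ (Lefschetz $(1,1)$, using $\oH^1(\sO_X)=0$). Since $\sigma$ spans the one-dimensional $\oH^{2,0}$, there is a unique irreducible sub-Hodge-structure $T\subset\oH^2(X,\Q)$ with $\sigma\in T\otimes\C$ (the transcendental Hodge structure), and every other irreducible summand is of type $(1,1)$, hence rational, hence contained in $\Q L$. This gives a decomposition of Hodge structures $\oH^2(X,\Q)=\Q L\oplus T$. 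By uniqueness $g(T)=T$, and $\bar g:=g|_T$ is a Hodge automorphism; by Schur's lemma $E:=\operatorname{End}_{\mathrm{Hdg}}(T)$ is a division algebra and $\bar g\in E^\times$.

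The key point, where the isotropy of $L$ enters, is that this splitting is \emph{not} orthogonal for $q_X$. Indeed $q_X(L,L)=0$, so if $T$ were contained in $L^\perp$ we would get $q_X(L,\oH^2(X,\Q))=q_X(L,\Q L\oplus T)=0$, contradicting the non-degeneracy of $q_X$. Hence the functional $\phi:=q_X(L,-)|_T\in T^\vee$ is non-zero. Now the isometry property of $g$, evaluated on $t\in T$, gives $q_X(L,t)=q_X(gL,gt)=\epsilon\,q_X(L,\bar g t)$, that is $\phi\circ\bar g=\epsilon\,\phi$ (using $\epsilon^{-1}=\epsilon$). Thus $\epsilon$ is an eigenvalue of the transpose $\bar g^\vee$, hence of $\bar g$. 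But $\Q[\bar g]\subset E$ is a field, so the minimal polynomial of $\bar g$ over $\Q$ is irreducible; having the rational root $\epsilon$, it must equal $x-\epsilon$, whence $\bar g=\epsilon\cdot\operatorname{id}_T$. Combined with $g|_{\Q L}=\epsilon\cdot\operatorname{id}$ and the splitting, this yields $g=\epsilon\cdot\operatorname{id}=\pm\operatorname{id}$.

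The main obstacle is the second paragraph: making rigorous that the transcendental part $T$ is a genuine irreducible direct summand and that its endomorphism algebra is a division algebra. This rests on the semisimplicity of the category of polarisable rational Hodge structures (so that $q_X$ polarises $T$ up to sign) together with Schur's lemma; one also checks, trivially, that $q_X$ remains non-degenerate over $\Q$. Everything else is elementary linear algebra, and I want to emphasise that the hypothesis $q_X(L)=0$ is used in an essential way: for an anisotropic $L$ the splitting would be $q_X$-orthogonal, the functional $\phi$ would vanish, and extra Hodge isometries arising from complex multiplication on $T$ could then survive.
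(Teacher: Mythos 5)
There is a genuine gap at the decomposition step, and it cannot be repaired: the splitting $\oH^2(X,\Q)=\Q L\oplus T$ you assert does not exist, and it fails precisely because $q_X(L)=0$. First note that since $\Pic(X)=\Z L$ and $q_X(L)=0$, the Fujiki relation shows $X$ carries no line bundle of positive square, so $X$ is non-projective and you cannot take polarisability (hence semisimplicity) of the weight-two Hodge structure for granted. In fact one can see directly that it fails. Let $T$ be the minimal sub-Hodge structure with $\sigma\in T\otimes\C$. Its orthogonal complement $T^{\perp}$ is a sub-Hodge structure of type $(1,1)$: otherwise $\sigma\in T^{\perp}\otimes\C$ (as $\oH^{2,0}$ is one-dimensional), forcing $T\subseteq T^{\perp}$, which is impossible since $q_X(\sigma+\bar\sigma)>0$. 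Hence $T^{\perp}\subseteq\oH^{1,1}(X,\Q)=\Q L$, and non-degeneracy of $q_X$ gives $\dim T\geq b_2(X)-1$, so that $T=L^{\perp}$ or $T=\oH^2(X,\Q)$; in either case $L\in T$ because $q_X(L)=0$. Thus $T$ properly contains the sub-Hodge structure $\Q L$, is not irreducible, and is not a complement of $\Q L$. Indeed \emph{no} sub-Hodge-structure complement of $\Q L$ exists: any such complement would have nonzero $(2,0)$-part, hence would contain $T\ni L$, contradicting that it meets $\Q L$ trivially. Consequently your functional $\phi=q_X(L,-)|_{T}$ is identically zero on the actual transcendental lattice $T=L^{\perp}$, and the eigenvalue/division-algebra argument has nothing to act on. (Your closing sanity check gets the dichotomy backwards: it is in the \emph{anisotropic} case that $\oH^2=\Q L\oplus L^{\perp}$ splits orthogonally, while in the isotropic case there is no splitting at all.)

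The paper avoids this entirely and argues via orientations: after composing with $-\operatorname{id}$ one may assume $g$ preserves the orientation, hence the positive cone $\mathcal{C}_X$; since $L$ lies on the boundary of $\mathcal{C}_X$ and $g(L)=\pm L$, this pins down $g(L)=L$; then the minimality of $T=L^{\perp}$ as a sub-Hodge structure containing $\sigma$ (and containing $L$) is used to force $g(\sigma)=\sigma$. Any repair of your argument would need an input of this kind that controls $g$ on all of $L^{\perp}$, rather than on a complement of $\Q L$ that does not exist.
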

\begin{proof}
    Let $\varphi\in\Aut_{\Hdg}(\oH^2(X,\Z)_{\operatorname{tf}})$ be an automorphism, and let $\sigma_X\in\oH^{2,0}(X)$ be the symplectic form. Up to compose $\varphi$ with $-\operatorname{id}$, we can assume that $\varphi$ preserves the orientation (see \cref{section:orientation} for the notion of orientation). 
    Moreover, since $q_X(L)=0$ by assumption, we can also assume that $L$ belongs to the boundary of the \emph{positive cone} $\mathcal{C}$, i.e.\ the cone of positive classes in $\oH^{1,1}(X,\R)$ containing the K\"ahler cone. 

    Now, since $\varphi$ is orientation preserving, we have that $\varphi(\mathcal{C})=\mathcal{C}$ and therefore $\varphi(L)=L$. 

    Finally, if $T(X)$ denotes the \emph{transcendental lattice} of $X$, i.e.\ the smallest sub-Hodge structure containing the symplectic form $\sigma_X$, by hypothesis we must have $\Pic(X)\cap T(X)=L$ (recall that the transcendental lattice is orthogonal to the Picard lattice). Since $\varphi(L)=L$, by minimality of $T(X)$ we must also have that $\varphi(\sigma_X)=\sigma_X$. This concludes the proof.    
\end{proof}

The second one can be seen as a slight strengthening of \cite[Corollary~6.15]{BakkerLehn2022}.

\begin{lem}\label{lemma:cycli pic is iso}
    Let $(X,\eta)$ and $(X',\eta')$ be two marked primitive symplectic varieties in the same connected component of $\fM_{\Lambda}$. Let us assume that $X$ and $X'$ are terminal and $\Q$-factorial. Moreover assume $\Pic(X)=\Z L$ and $\Pic(X')=\Z L'$ with $q_X(L)=0$ and $q_{X'}(L')=0$. If $\cP(X,\eta)=\cP(X',\eta')$, then $(X,\eta)=(X',\eta')$.
\end{lem}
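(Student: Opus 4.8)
The plan is to deduce the statement from the Global Torelli Theorem (\cref{thm:global Torelli}) together with the birational geometry of the two varieties. First I would observe that, since $\cP(X,\eta)=\cP(X',\eta')$, the two pairs have the same image in the Hausdorff reduction $\overline{\fM^0}_\Lambda$; as $\overline{\cP}|_{\overline{\fM^0}_\Lambda}$ is injective by \cref{thm:global Torelli}, the pairs $(X,\eta)$ and $(X',\eta')$ are inseparable points of the connected component $\fM^0_\Lambda$. By the description of non-separated points in the marked moduli space (\cite[Section~6]{BakkerLehn2022}; compare the smooth case in \cite[Section~5]{Markman:Survey}), there is then a bimeromorphic map $f\colon X\dashrightarrow X'$ whose induced map $f_*\colon\oH^2(X,\Z)\to\oH^2(X',\Z)$ is a locally trivial parallel transport operator. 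Since $X$ and $X'$ are $\Q$-factorial and terminal with trivial canonical class, $f$ is automatically an isomorphism in codimension one.

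The key step is to upgrade $f$ to a genuine isomorphism, and this is where the hypotheses $\Pic(X)=\Z L$ and $\Pic(X')=\Z L'$ with $q_X(L)=q_{X'}(L')=0$ enter. Recall that the Kähler cone $\mathcal{K}_X$ is a chamber of the positive cone $\mathcal{C}_X\subset\oH^{1,1}(X,\R)$ whose walls are cut out by integral classes of type $(1,1)$ and negative BBF square (prime exceptional classes and, more generally, flopping classes; see \cref{defn:pex}). Every integral class of type $(1,1)$ on $X$ lies in $\Pic(X)=\Z L$, hence is a multiple of $L$, which has $q_X(L)=0$; thus there are no negative $(1,1)$ classes and no walls, so $\mathcal{K}_X=\mathcal{C}_X$, and likewise $\mathcal{K}_{X'}=\mathcal{C}_{X'}$. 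As $f_*$ is a parallel transport operator it is orientation preserving (see \cref{section:orientation}), whence $f_*(\mathcal{C}_X)=\mathcal{C}_{X'}$ and therefore $f_*(\mathcal{K}_X)=\mathcal{K}_{X'}$. A bimeromorphic map between compact primitive symplectic varieties that pulls back a Kähler class to a Kähler class is an isomorphism, so $f\colon X\to X'$ is a biholomorphism.

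It then remains to match the markings. Transporting $\eta$ through $f$ produces a marking $\eta'':=\eta\circ f^{*}$ of $X'$ with $(X',\eta'')=(X,\eta)$ in $\fM_\Lambda$ and $\cP(X',\eta'')=\cP(X,\eta)=\cP(X',\eta')$. Consequently $\phi:=(\eta'')^{-1}\circ\eta'\in\Or(\oH^2(X',\Z))$ fixes the line $\oH^{2,0}(X')$ and is therefore a Hodge automorphism; by \cref{lem:automorphismsParabolicHodgeStructure} we get $\phi=\pm\operatorname{id}$, and since $\eta'$ and $\eta''$ lie in the same connected component $\fM^0_\Lambda$ they induce the same orientation, forcing $\phi=\operatorname{id}$. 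Hence $\eta'=\eta''$ and $(X',\eta')=(X,\eta)$. I expect the middle step to be the main obstacle: one must invoke the wall-and-chamber description of the Kähler cone of a $\Q$-factorial terminal primitive symplectic variety in terms of negative integral $(1,1)$ classes, and the fact that the absence of walls promotes a bimeromorphic equivalence to an isomorphism — the singular analogue of the Kähler-cone characterisation underlying the smooth Torelli theorem.
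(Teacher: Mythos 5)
Your skeleton matches the paper's: coinciding periods in one component give a bimeromorphic map $f\colon X\dashrightarrow X'$ (via \cite[Theorem~6.14]{BakkerLehn2022}) that is an isomorphism in codimension one, one upgrades $f$ to a biholomorphism, and one kills the residual Hodge isometry with \cref{lem:automorphismsParabolicHodgeStructure}. The gap is exactly the middle step, which you yourself flag as the main obstacle: you rest it on two statements that are not available off the shelf in the singular setting. First, the wall-and-chamber description of the K\"ahler cone (walls cut out by integral $(1,1)$ classes of negative square), which you use to conclude $\mathcal{K}_X=\mathcal{C}_X$. Note that since $\Pic(X)=\Z L$ with $q_X(L)=0$, the variety $X$ carries no class of positive square and is therefore \emph{not} projective, so none of the projective MMP descriptions of the ample or movable cone apply; a Huybrechts/Demailly--Paun type characterisation of the K\"ahler cone of a non-projective singular primitive symplectic variety is not among the paper's inputs. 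Second, the principle that a bimeromorphic map between compact singular primitive symplectic varieties carrying a K\"ahler class to a K\"ahler class extends to an isomorphism: in this category that is itself a nontrivial rigidity statement, and proving the instance needed here is essentially the content of the step you are outsourcing.

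The paper's argument is designed to avoid both inputs. It takes a resolution of indeterminacies $p,q\colon Y\to X,X'$ and supposes some curve $C$ in the exceptional locus is contracted by $p$ but not by $q$. Golota's discrepancy formula gives $f_*\alpha\cdot q(C)\le 0$ for a K\"ahler class $\alpha$ on $X$. On the other hand, the BBF-dual $M$ of $q(C)$ is an integral $(1,1)$ class on $X'$, hence a rational multiple of $L'$, and the multiple is positive because $q(C)$ is an effective curve class and $L'$ lies on the boundary of the positive cone; combined with $f_*L=\mu'L'$, $\mu'>0$, this yields $q_{X'}(f_*\alpha,M)=\lambda\,q_X(\alpha,L)>0$ with $\lambda>0$ --- a contradiction obtained purely from $f_*$ being an orientation-preserving Hodge isometry, with no need to know that $f_*\alpha$ is K\"ahler on $X'$ or to identify the K\"ahler cone at all. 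Golota's rigidity lemma then shows $f$ is an isomorphism. To salvage your route you would have to supply proofs of (or precise references for) the two assertions above; as written, the proposal assumes the hard part.
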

\begin{proof}
By \cite[Theorem~6.14]{BakkerLehn2022} there exists a bimeromorphic map $f\colon X \dashrightarrow X'$. Using  \cite[Lemma~4.2]{GolotaRigidityLemma} we get that $f$ is an isomorphism in codimension one. By Chow's Lemma there exists a resolution of indeterminacies
\[
\begin{tikzcd}
  &  Y\ar[dl,swap,"p"]\ar[dr,"q"]\\
  X\ar[rr,dashed,"f"] & & X'
\end{tikzcd}
\]
where $q$ is projective and a sequence of blowups with smooth centers. Let $E=\sum_i E_i$ be the exceptional divisor. Since $f$ does not contract divisors, then  $E$ is exceptional both for $p$ and $q$. If for every $i$ and every curve $C\subset E_i$ we have that $C$ gets contracted by $q$ to a point, then by the rigidity Lemma \cite[Lemma~4.1]{GolotaRigidityLemma} we will have that $f^{-1}$ is a morphism; exchanging the roles of $X$ and $X'$, we get that $f$ is an isomorphism. 

We can therefore assume that there exists some $C\subseteq E_i$ that is contracted to a point by $p$ but not by $q$.
Let $\alpha\in\oH^{1,1}(X)$ be a Kahler class. Then by \cite[Lemma~4.4]{GolotaRigidityLemma} we have
\[
q^*q_*p^*\alpha - p^*\alpha = \sum_i a_i E_i
\]
with $a_i\geq 0$. We compute 
\begin{align*}
f_*\alpha. q(C)=q_*p^*\alpha.q(C) = &  \,\deg(q\vert_C)( q^*q_*p^*\alpha).q(C) \\
= & \, \deg(q\vert_C)(p^*\alpha + E).C \\ 
= & \, 0 - \deg(q\vert_C)a_i\leq 0\, .
\end{align*}

Now, let $M$ be the BBF-dual of $q(C)$. Recall that $M$ is uniquely determined by the property that $q_{X'}(\beta,M)=\int_{q(C)}\beta$, for every $\beta\in\oH^2(X',\Q)$. Notice also that we have that $M$ is a $(1,1)$-class (cf.\ \cite[Remark~2.9]{LMPConeConjecture}). Since $\Pic(X')=\Z L'$, we have that $M=\mu L'$, for some $\mu\in\Q$. We claim that $\mu>0$. In fact, let $\beta\in\oH^{2}(X,\Z)$ be a K\"ahler class, so that $\mu q_{X'}(\beta,L')=\int_{q(C)}\beta>0$; since $L'$ belongs to the border of the positive cone, we must have $q_{X'}(\beta,L')\geq0$, from which the claim follows. 

On the other hand, since $f_*$ is an orientation preserving Hodge isometry, we must also have $f_*L=\mu' L'$, with $\mu'>0$. It follows that $M = \lambda f_*L$, with $\lambda>0$.

Let now again $\alpha\in\oH^{1,1}(X)$ be a K\"ahler class. On one hand, because of the computation above
\[
q_{X'}(f_*(\alpha),M) = f_*\alpha . q(C)\leq 0
\]
while, on the other hand,
\[
q_{X'}(f_*(\alpha),M) = q_{X}(\alpha,\lambda L) > 0
\]
(the last inequality holds because $\lambda L$ is effective by construction and $\alpha$ is a K\"ahler class).
This contradiction implies that $f$ is an isomorphism.

We conclude that there exists a Hodge isometry that moreover maps a Kähler class to a Kähler class. Since $\mathrm{Aut}_{\mathrm{Hdg}}(\oH^2(X,\Z))=\pm \operatorname{id}$ by \cref{lem:automorphismsParabolicHodgeStructure}, then $(X,\eta)=(X',\eta')$ as wanted.
\end{proof}

\begin{proof}[Proof of Proposition~\ref{prop:M ell 0 is path connected}]
    The proof is the same as in \cite[Corollary~5.11]{Markman:PrimeExceptional}, provided one uses Lemma~\ref{lemma:cycli pic is iso} in place of \cite[Corollary~5.10]{Markman:PrimeExceptional}.
\end{proof}


\subsection{Proof of Proposition~\ref{prop:nef=lagr}}
We divide the proof in two parts, the first one addressing the density of $\fM_\ell^{\operatorname{lagr}}$ in $\fM^0_\ell$, the second one addressing the equality $\fM_\ell^{\operatorname{lagr}}=\fM_\ell^{\operatorname{nef}}$.

\subsubsection{$\fM_\ell^{\operatorname{lagr}}\subset\fM^0_\ell$ is open and dense}\label{section:M lagr dense}

Let us assume that $\fM_\ell^{\operatorname{lagr}}\neq\emptyset$. We follow the proof of \cite[Lemma~3.4]{Matsushita:Isotropic2017}.

By Proposition~\ref{prop:higher direct images are free} we already know that $\fM_\ell^{\operatorname{lagr}}$ is open in $\fM^0_\ell$ (cf.\ Remark~\ref{rmk:lagr is open in L}). Let us show that it is dense.

For this, it is enough to prove that 
\[ \Deflt(X,L)_{\operatorname{lagr}}=\{t\in\Deflt(X,L)\mid \mathcal{L}_t \mbox{ defines a lagrangian fibration}\} \]
is dense in $\Deflt(X,L)$. Here $\mathcal{L}$ is the universal line bundle on the Kuranishi family $\mathcal{X}\to\Deflt(X,L)$. Denote by $\overline{\Deflt}(X,L)_{\operatorname{lagr}}$ the closure of $ \Deflt(X,L)_{\operatorname{lagr}}$ in $\Deflt(X,L)$.

Let $t\in\overline{\Deflt}(X,L)_{\operatorname{lagr}}$ be a point such that $\oH^{1,1}(\mathcal{X}_{t},\Q)=\Q\mathcal{L}_t$. Notice that such a point exists, since the set of points corresponding to varieties with Picard rank $1$ are dense. We claim that $t\in\Deflt(X,L)_{\operatorname{lagr}}$, thus concluding the proof.

First of all, $t\in\Deflt(X,L)$ is also very general, so that $\mathcal{L}_t$ is semiample by Lemma~\ref{lemma:molto general is semiample}. Let us now take a small disc $\Delta\subset \Deflt(X,L)$ such that $t\in\Delta$ and $\Delta\setminus\{t\}\subset\Deflt(X,L)_{\operatorname{lagr}}$. If we denote by $\pi_\Delta\colon\mathcal{X}\to\Delta$ the restriction of the Kuranishi family, then by Proposition~\ref{prop:Nakayama} we have that $\pi_{\Delta*}\mathcal{L}^{\otimes k}$ is locally free and for every $s\in\Delta$ there is an equality
\[ \left(\pi_{\Delta,*}\mathcal{L}^{\otimes k}\right)_s\cong\oH^0(\mathcal{X}_s,\mathcal{L}^{\otimes k}_s). \]
Combining with Lemma~\ref{lemma:semiample if and only if nef +} and taking $s=t$, we eventually get that $\mathcal{L}_t$ induces a lagrangian fibration, i.e.\ $t\in \Deflt(X,L)_{\operatorname{lagr}}$. \qed

\begin{rem}\label{remark:complement of hypersurfaces}
    Arguing as the last part of \cite[Lemma~3.4]{Matsushita:Isotropic2017}, we have that 
    \[ \Deflt(X,L)\setminus\Deflt(X,L)_{\operatorname{lagr}}\subset\left\{ t\in\Deflt(X,L)\mid \dim \oH^{1,1}(\mathcal{X}_t,\Z)\geq 2 \right\}, \] 
    where the latter is a countable union of hypersurfaces.
\end{rem}

\subsubsection{$\fM_\ell^{\operatorname{lagr}}=\fM_\ell^{\operatorname{nef}}$.}

Let us again suppose that $\fM_\ell^{\operatorname{lagr}}\neq\emptyset$. 
It is enough to show that $\fM_\ell^{\operatorname{nef}}\subset\fM_\ell^{\operatorname{lagr}}$. 

Let $(X,\eta)\in\fM_\ell^{\operatorname{nef}}$ and put $L=\eta^{-1}(\ell)$.  
From now on we work locally around $(X,\eta)$: let $\Deflt(X,L)$ be the Kuranishi space and $0\in\Deflt(X,L)$ the reference point. Moreover, put $\Deflt(X,L)^{\operatorname{lagr}}=\Deflt(X,L)\cap\fM_\ell^{\operatorname{lagr}}$ and $\Deflt(X,L)^{\operatorname{nef}}=\Deflt(X,L)\cap\fM_\ell^{\operatorname{nef}}$.

Since $\fM_\ell^{\operatorname{lagr}}$ is dense in $\fM^0_\ell$, we have that $0\in\Deflt(X,L)$ belongs to the closure of $\Deflt(X,L)^{\operatorname{lagr}}$. Moreover, we can choose a small disc $\Delta\subset\Deflt(X,L)$ such that $0\in\Delta$ and $\Delta\setminus\{0\}\subset\Delta^{\operatorname{lagr}}:=\Delta\cap\Deflt(X,L)^{\operatorname{lagr}}$.

Let $\mathcal{X}_L$ be the restriction to $\Deflt(X,L)$ of the universal family of $\Deflt(X)$. Then there exists a line bundle $\mathcal{L}$ on $\mathcal{X}_L$ such that $(\mathcal{X}_L,\mathcal{L})$ is the universal family of $\Deflt(X,L)$ (see Section \ref{section:inf Torelli}). By abuse of notation, we keep the same notation for their restrictions to the disc $\Delta$.

Let us then consider the projection $\pi\colon\mathcal{X}_L\to\Delta$. By Proposition~\ref{prop:higher direct images are free} we have that $\pi_*\mathcal{L}^{\otimes k}$ is locally free with fibre over $t\in\Delta$ isomorphic to $\oH^0(\mathcal{X}_t,\mathcal{L}_t^{\otimes k})$.

When $t\neq0$, since $\mathcal{L}_t$ is semiample by assumption, we have that $\oH^0(\mathcal{X}_t,\mathcal{L}^{\otimes k}_t)=\oH^0(\P^n,\sO(k))$. Therefore the same must be true for $t=0$ and by Lemma \ref{lemma:semiample if and only if nef +} we conclude that $0\in\Delta^{\operatorname{lagr}}$, that is $(X,\eta)\in\fM_\ell^{\operatorname{lagr}}$. \qed

\subsection{Proof of Theorem \ref{thm:main result on lagr}}

We start with the following remark, which will be useful later.

\begin{lem}\label{lemma: M ell nef è connesso}
    Let $\ell\in\Lambda$ be an isotropic class. If $\fM_\ell^{\operatorname{lagr}}\neq\emptyset$, then the locus
    \[ \mathcal{W}=\left\{ (X,\eta)\in\fM_\ell^{\operatorname{lagr}}\mid \dim\, \oH^{1,1}(X,\Z)=1 \right\} \]
    is path-connected.
\end{lem}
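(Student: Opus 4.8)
The plan is to identify $\mathcal{W}$ with an explicit locus in the period domain and then to read off path-connectedness from the topology of that locus.

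First I would show that $\mathcal{W}$ is in fact the \emph{entire} Picard-rank-one locus of $\fM_\ell^0$, namely $\{(X,\eta)\in\fM_\ell^0\mid \dim\oH^{1,1}(X,\Z)=1\}$. Since $\fM_\ell^{\operatorname{lagr}}\neq\emptyset$, Proposition~\ref{prop:nef=lagr} gives that $\fM_\ell^{\operatorname{lagr}}$ is open and dense in $\fM_\ell^0$, so its closure in $\fM_\ell^0$ is everything. By Remark~\ref{remark:complement of hypersurfaces} the complement $\fM_\ell^0\setminus\fM_\ell^{\operatorname{lagr}}$ is contained in the countable union of positive-codimension loci where the Picard rank jumps to at least $2$; equivalently, running the density argument of Section~\ref{section:M lagr dense} (semiampleness at a very general point via Lemma~\ref{lemma:molto general is semiample}, together with Proposition~\ref{prop:Nakayama}) at any rank-one point shows that such a point already lies in $\fM_\ell^{\operatorname{lagr}}$. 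Hence every Picard-rank-one point of $\fM_\ell^0$ is lagrangian, and $\mathcal{W}$ equals the rank-one locus.

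Next I would compare $\mathcal{W}$ with the period domain. Set $\Omega_\ell^{+,\circ}:=\Omega_\ell^+\setminus\bigcup_{v}\{p\in\Omega_\ell^+\mid (p,v)=0\}$, the union running over the countably many primitive $v\in\Lambda$ with $v\notin\Q\ell$; each removed set is a proper closed analytic subset of complex codimension one. A period $p\in\Omega_\ell^+$ has Picard rank one (i.e.\ its only integral $(1,1)$-classes are multiples of $\ell$) precisely when $p\in\Omega_\ell^{+,\circ}$, so $\cP_0$ carries $\mathcal{W}$ into $\Omega_\ell^{+,\circ}$. By the Local Torelli Theorem (Proposition~\ref{prop:local Torelli}) $\cP_0$ is a local isomorphism; by Lemma~\ref{lemma:cycli pic is iso} it is injective on $\mathcal{W}$; and by the Global Torelli Theorem (Proposition~\ref{thm:global Torelli}, where $\Q$-factoriality and terminality give the surjectivity) every $p\in\Omega_\ell^{+,\circ}$ admits a preimage, which is automatically of Picard rank one and therefore lies in $\mathcal{W}$. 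Combining these, $\cP_0$ restricts to a homeomorphism $\mathcal{W}\xrightarrow{\sim}\Omega_\ell^{+,\circ}$.

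It then remains to prove that $\Omega_\ell^{+,\circ}$ is path-connected. Since $\ell$ is isotropic, $\Omega_\ell$ has two connected components (see Section~\ref{section:orientation}), and $\Omega_\ell^+$ is one of them; it is a connected complex manifold of dimension $\operatorname{rk}(\Lambda)-3\geq 1$ (concretely it fibers over the type IV domain of the lattice $\ell^\perp/\Z\ell$ of signature $(2,\operatorname{rk}(\Lambda)-4)$, and is in particular simply connected). Removing from it the countable union $\bigcup_v\{(p,v)=0\}$ of closed analytic subsets of complex codimension one, hence real codimension two, cannot disconnect it: given two points of $\Omega_\ell^{+,\circ}$, join them by a path in the connected manifold $\Omega_\ell^+$ and then apply a Baire-category/transversality argument over the countably many codimension-two strata to perturb the path off the removed locus. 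I expect the main obstacle to be the second step: making the identification $\cP_0\colon\mathcal{W}\xrightarrow{\sim}\Omega_\ell^{+,\circ}$ rigorous, in particular handling the non-Hausdorffness of $\fM_\ell^0$ and using Lemma~\ref{lemma:cycli pic is iso} to confirm that over a rank-one period the marked pairs in the fibre genuinely coincide, so that injectivity holds on the nose rather than only after passing to the Hausdorff reduction.
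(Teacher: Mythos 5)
Your argument is correct, but it is organised differently from the paper's. The first step --- showing via Remark~\ref{remark:complement of hypersurfaces} that $\mathcal{W}$ coincides with the whole Picard-rank-one locus of $\fM^0_\ell$ --- is exactly what the paper does. For the connectivity itself, however, the paper stays entirely on the moduli space: it combines Proposition~\ref{prop:M ell 0 is path connected} (path-connectedness of $\fM^0_\ell$) with Verbitsky's lemma that deleting a countable union of hypersurfaces $\mathcal{Z}$ from a connected complex manifold does not disconnect it, so that $\mathcal{W}=\fM^0_\ell\setminus\mathcal{Z}$ is path-connected in two lines. You instead transport the problem to the period domain, building a homeomorphism from $\mathcal{W}$ onto the rank-one locus $\Omega_\ell^{+,\circ}$ (injectivity from Lemma~\ref{lemma:cycli pic is iso}, surjectivity from Proposition~\ref{thm:global Torelli}, local isomorphism from Local Torelli) and then running the real-codimension-two removal argument there. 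This is essentially an inlined version of the proof of Proposition~\ref{prop:M ell 0 is path connected}, which the paper imports from Markman and which rests on the same Lemma~\ref{lemma:cycli pic is iso}; your route is heavier, since it needs injectivity and surjectivity of the period map on the rank-one locus rather than only its local-isomorphism property, but it makes the non-Hausdorffness issue explicit and avoids invoking Verbitsky's connectivity lemma as a black box. Two points you should record explicitly: the surjectivity in Proposition~\ref{thm:global Torelli} requires $\operatorname{rk}(\Lambda)\geq 5$ and a $\Q$-factorial terminal member, and Lemma~\ref{lemma:cycli pic is iso} likewise assumes $\Q$-factoriality and terminality; these are the standing hypotheses of the section (the paper's route needs them too, through Proposition~\ref{prop:M ell 0 is path connected}), so this is a matter of bookkeeping rather than a gap.
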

\begin{proof}
    First of all, let us remark that if $\fM_\ell^{\operatorname{lagr}}\neq\emptyset$, then $\mathcal{W}$ is dense.
    
    Now, let us consider the locus
    \[ \mathcal{Z}=\left\{ (X,\eta)\in\fM^0_\ell\mid \dim \oH^{1,1}(X,\Z)\geq 2 \right\}. \]
    It is a countable union of hypersurfaces, so that the complement $\fM^0_\ell\setminus\mathcal{Z}$ is path-connected by \cite[Lemma~4.10]{Verbitsky:Torelli}.

    On the other hand, it follows from Section~\ref{section:M lagr dense} (see Remark~\ref{remark:complement of hypersurfaces}) that $\fM^0_\ell\setminus\fM_\ell^{\operatorname{lagr}}\subset\mathcal{Z}$, so that $\mathcal{W}=\fM^0_\ell\setminus\mathcal{Z}$, which concludes the proof.
\end{proof}

The next result extends to the singular setting results from \cite{Markman:LagrangianFibrations} (see also \cite[Proposition 3.9]{Wieneck2016}).

\begin{prop}\label{prop:def as lagr fibr iff existence of suitable pto}
    Let $X_1$ and $X_2$ be two primitive symplectic varieties that are locally trivial deformation equivalent. For $i=1,2$, let $f_i\colon X_i\to \P^n$ be two lagrangian fibrations. 

    Then, $f_i\colon X_i\to \P^n$ are locally trivial deformations as lagrangian fibrations (see Definition \ref{defn:def of lagr fibr}) if and only if there exists a locally trivial parallel transport operator
    \[ \mathsf{P}\colon \oH^2(X_1,\Z)\longrightarrow\oH^2(X_2,\Z) \]
    such that $\mathsf{P}(L_1)=L_2$, where $L_i=c_1(f_i^*\sO_{\P^n}(1))\in\oH^2(X_i,\Z)$.
\end{prop}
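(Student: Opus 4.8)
The plan is to prove Proposition~\ref{prop:def as lagr fibr iff existence of suitable pto} in two directions, with the forward implication being essentially immediate and the converse being the substantive part of the argument.

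For the \emph{only if} direction, suppose $f_i\colon X_i\to\P^n$ are locally trivial deformation equivalent as lagrangian fibrations. By definition there is a connected base $T$, a locally trivial family $p\colon\mathcal{X}/\mathcal{B}\to T$ of lagrangian fibrations, and points $t_1,t_2\in T$ whose fibres are the given $f_i$. Choosing a path $\gamma$ in $T$ from $t_1$ to $t_2$, parallel transport along $\gamma$ gives a locally trivial parallel transport operator $\mathsf{P}\colon\oH^2(X_1,\Z)\to\oH^2(X_2,\Z)$. Since $\mathcal{B}\to T$ is (fibrewise) a $\P^n$-fibration and $f$ is a $T$-morphism, the relative $\sO(1)$ glues the classes $L_i=c_1(f_i^*\sO_{\P^n}(1))$ along $\gamma$, so $\mathsf{P}(L_1)=L_2$.

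The converse is the heart of the matter, and here I would invoke the machinery just established. Given a locally trivial parallel transport operator $\mathsf{P}$ with $\mathsf{P}(L_1)=L_2$, I first choose a marking $\eta_1$ of $X_1$, set $\ell=\eta_1(L_1)$ and $\eta_2=\mathsf{P}\circ\eta_1^{-1}\ldots$ more precisely $\eta_2\colon\oH^2(X_2,\Z)\to\Lambda$ defined so that $(X_1,\eta_1)$ and $(X_2,\eta_2)$ both lie in $\fM_\ell^{\operatorname{lagr}}$ inside a common connected component $\fM^0_\ell$; the compatibility $\mathsf{P}(L_1)=L_2$ together with the fact that parallel transport operators are orientation preserving (see the Remark in Section~\ref{section:orientation}) guarantees both points land in the \emph{same} connected component $\fM^0_\ell$ determined by $\Omega_\ell^+$. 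Now the strategy is to connect $(X_1,\eta_1)$ to $(X_2,\eta_2)$ by a path \emph{inside} $\fM_\ell^{\operatorname{lagr}}$, along which the lagrangian fibrations deform. By Lemma~\ref{lemma: M ell nef è connesso} the dense open locus $\mathcal{W}\subset\fM_\ell^{\operatorname{lagr}}$ of Picard-rank-one members is path-connected; by the density of $\fM_\ell^{\operatorname{lagr}}$ (Proposition~\ref{prop:nef=lagr}) and the local structure of $\fM^0_\ell$ as $\Deflt(X,L)$, I can perturb the endpoints $(X_i,\eta_i)$ slightly to land in $\mathcal{W}$ and absorb these perturbations into the deformation, since Proposition~\ref{prop:higher direct images are free}(2) produces, on each Kuranishi chart $\Deflt(X_i,L_i)$, a genuine locally trivial family of lagrangian fibrations $\tilde{f}\colon\mathcal{X}_{L}\to\P(\pi_{L*}\mathcal{L})$ specialising to $f_i$ at the reference point.

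The key step is then to glue these local families along the connecting path. Covering the path by finitely many Kuranishi charts, on each chart Proposition~\ref{prop:higher direct images are free}(2) gives a locally trivial family of lagrangian fibrations over $\Deflt(X,L)$ whose $\P^n$-base is $\P(\pi_{L*}\mathcal{L})$; the point is that the relative base $\mathcal{B}$ is \emph{intrinsically} recovered from the pushforward $\pi_{L*}\mathcal{L}$, so the locally trivial families of fibrations on overlapping charts agree over the overlaps (both being the relative Proj/linear system of the same line bundle), and hence patch to a single locally trivial family of lagrangian fibrations over a connected base joining $(X_1,\eta_1)$ to $(X_2,\eta_2)$. I expect the main obstacle to be exactly this patching: one must verify that the bases $\P(\pi_{L*}\mathcal{L})$ constructed on different charts are canonically identified on overlaps and that the total space of the fibration is well defined globally along the path, rather than merely chart-by-chart. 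Granting this, the restriction of the glued family to $X_1$ and $X_2$ exhibits $f_1$ and $f_2$ as locally trivial deformation equivalent lagrangian fibrations, which is the desired conclusion. \qed
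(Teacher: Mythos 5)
Your overall strategy is the same as the paper's: reduce via Proposition~\ref{prop:higher direct images are free} to endpoints lying in the Picard--rank--one locus $\mathcal{W}$, use the path-connectedness of $\mathcal{W}$ from Lemma~\ref{lemma: M ell nef è connesso}, cover the path by Kuranishi charts carrying families of lagrangian fibrations, and assemble them into one family over a connected base. The one place where you diverge is the step you yourself flag and then assume (``Granting this\dots''): you propose to glue the local families of fibrations over the \emph{full overlaps} of consecutive Kuranishi charts, which forces you to verify that the bases $\P(\pi_{L*}\mathcal{L})$ and the total spaces are canonically identified on overlaps --- a nontrivial check (e.g.\ $\mathcal{L}$ is only determined up to a twist by a line bundle from the base) that you leave open. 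The paper avoids this entirely: since Definition~\ref{defn:def of lagr fibr} only requires a \emph{connected} base $T$, it suffices to pick a single point $z_k$ in each consecutive overlap $\Deflt(Y_k,M_k)\cap\Deflt(Y_{k+1},M_{k+1})$, form the quotient $\mathsf{D}$ of the disjoint union of the charts identifying only these points, and glue the families $\mathcal{Y}_k$ and bases $\mathcal{B}_k$ along the single fibres over the $z_k$. This pointwise gluing requires no compatibility over open overlaps and closes the gap you acknowledged; with that substitution your argument is complete and agrees with the paper's.
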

\begin{proof}
    If $f_i\colon X_i\to\P^n$ are locally trivial deformation equivalent as lagrangian fibrations, then clearly there exists a locally trivial parallel transport operator sending the class of the fibration to the class of the fibration. Let us then prove the opposite implication.

    Let $\eta_2$ be a marking on $X_2$ and let us put $\eta_1=\eta_2\circ \mathsf{P}$. Then by definition $(X_1,\eta_1)$ and $(X_2,\eta_2)$ belong to the same connected component $\fM^0_\Lambda$ of $\fM_\Lambda$. Moreover, there exists an isotropic element $\ell\in\Lambda$ such that $\eta_1(f_1^*\sO_{\P^n}(1))=\ell=\eta_2(f_2^*\sO_{\P^n}(1))$. Again by definition, $(X_1,\eta_1)$ and $(X_2,\eta_2)$ belong to $\fM_{\ell}^0$. More precisely, since both $L_1$ and $L_2$ are classes of lagrangian fibrations, we have that $(X_1,\eta_1)$ and $(X_2,\eta_2)$ belong to $\fM_\ell^{\operatorname{lagr}}$. 

    First of all, we claim that it is enough to prove the case when $\oH^{1,1}(X_i,\Z)=\Z L_i$. In fact, by Proposition~\ref{prop:higher direct images are free}, the infinitesimal universal families $\mathcal{X}_{L_i}\to\Deflt(X_i,L_i)$ are families of lagrangian fibrations and, for a general point $t\in\Deflt(X_i,L_i)$, we have that $\oH^{1,1}(\mathcal{X}_t,\Z)=\Z\mathcal{L}_t$. 

    Therefore we can assume that 
    \[ (X_1,\eta_1),(X_2,\eta_2)\in\mathcal{W}=\left\{ (X,\eta)\in\fM_\ell^{\operatorname{lagr}}\mid \dim\, \oH^{1,1}(X,\Z)=1 \right\}. \]

    Now, since $\mathcal{W}$ is path-connected by Lemma~\ref{lemma: M ell nef è connesso}, there exists a path $\gamma\subset\mathcal{W}$ connecting $(X_1,\eta_1)$ to $(X_2,\eta_2)$. Let us choose a finite number of points $p_1,\dots,p_N$ such that $p_1=(X_1,\eta_1)$ and $p_N=(X_2,\eta_2)$. Notice that each $p_k$ corresponds to a marked pair $(Y_k,\eta_k)$ such that there exists a lagrangian fibration $g_k\colon Y_k\to\P^n$. If we put $M_k=c_1(g_k^*\sO_{\P^n}(1))$, then by Proposition \ref{prop:higher direct images are free} there is a locally trivial family of lagrangian fibrations
    \[
    \xymatrix{
    \mathcal{Y}_k\ar@{->}[rr]\ar@{->}[dr] & & \mathcal{B}_k\ar@{->}[dl] \\
     & \Deflt(Y_k,M_k) &
    }
    \]
    for any $k=1,\dots,N$.
    Notice that, by construction, $\Deflt(Y_k,M_k)\cap\Deflt(Y_{k+1},M_{k+1})\neq\emptyset$. 
    
    
    Now, for $k=1,\dots,N-1$, let us choose $z_k\in\Deflt(Y_k,M_k)\cap\Deflt(Y_{k+1},M_{k+1})$. On the disjoint union $\coprod_{k=1}^N\Deflt(Y_k,M_k)$, we define the equivalence relation $\sim$ such that for any $x,y\in\coprod_{k=1}^N\Deflt(Y_k,M_k)$, then $x\sim y$ if and only if $x=y$ or there exists and index $k$ such that $x=z_k\in\Deflt(Y_k,M_k)$ and $y=z_k\in\Deflt(Y_{k+1},M_{k+1})$ or vice versa. Let us then define the analytic space $\mathsf{D}=(\coprod_{k=1}^N\Deflt(Y_k,M_k))/\sim$.

    Similarly, let us define the spaces:
    \begin{itemize}
        \item $\mathcal{Y}$ by gluing, for every $k=1\dots,N-1$, the spaces $\mathcal{Y}_k$ and $\mathcal{Y}_{k+1}$ at the fibres $(\mathcal{Y}_k)_{z_k}\cong(\mathcal{Y}_{k+1})_{z_k}$;
        \item $\mathcal{B}$ by gluing, for every $k=1\dots,N-1$, the spaces $\mathcal{B}_k$ and $\mathcal{B}_{k+1}$ at the fibres $(\mathcal{B}_k)_{z_k}\cong(\mathcal{B}_{k+1})_{z_k}$.
    \end{itemize}
    In this way we get a locally trivial family of lagrangian fibrations 
     \[
    \xymatrix{
    \mathcal{Y}\ar@{->}[rr]^{\tilde{f}}\ar@{->}[dr] & & \mathcal{B}\ar@{->}[dl] \\
     & \mathsf{D} &
    }
    \]
    such that $\mathsf{D}$ is connected and there exist two points $d_1,d_2\in\mathsf{D}$ such that $\tilde{f}_{d_i}\colon\mathcal{Y}_{d_i}\to\mathcal{B}_{d_i}$ coincides with $f_i\colon X_i\to\P^n$.
    This concludes the proof.
\end{proof}

\proof[Proof of Theorem \ref{thm:main result on lagr}]
First of all, let us chose a marking $\eta_2$ of $X_2$ and let us put $\eta_1=\eta_2\circ\mathsf{P}$. Then $(X_1,\eta_1)$ and $(X_2,\eta_2)$ belongs to the same connected component of $\fM_\Lambda$ and since $L_1$ and $L_2$ are nef we further have that $(X_1,\eta_1),(X_2,\eta_2)\in\fM_\ell^{\operatorname{nef}}$. Here $\ell\in\Lambda$ is the element such that $\eta_1(L_1)=\ell=\eta_2(L_2)$. 

Now, since by assumption $(X_1,L_1)\in\fM_\ell^{\operatorname{lagr}}$, the latter is non-empty and by Proposition~\ref{prop:nef=lagr} we have $\fM_\ell^{\operatorname{nef}}=\fM_\ell^{\operatorname{lagr}}$. Therefore also $(X_2,\eta_2)\in\fM_\ell^{\operatorname{lagr}}$, i.e.\ $L_2$ induces a lagrangian fibration.

Finally, the fact that $X_1$ and $X_2$ are locally trivial deformation equivalent as lagrangian fibrations follows at once from Proposition \ref{prop:def as lagr fibr iff existence of suitable pto}. 
\endproof

\section{Moduli spaces of sheaves on K3 surfaces}\label{section:moduli spaces of sheaves on K3}
In this section we recall some facts about moduli spaces of sheaves on K3 surfaces and their relations with primitive symplectic varieties.

\subsection{Generalities}\label{section:generalities}
Let $S$ be a projective K3 surface. Recall that the Mukai lattice of $S$ is 
\[ \widetilde{\oH}(S,\Z):=\oH^0(S,\Z)\oplus\oH^2(S,\Z)\oplus\oH^4(S,\Z),\qquad (r,c,s)^2=c^2-2rs, \]
and it comes with a weight two Hodge structure such that $\widetilde{\oH}(S,\C)^{2,0}=\oH^{2,0}(S,\C)$.

A vector $v=(r,c,s)\in\widetilde{\oH}(S,\Z)$ is a \emph{Mukai vector} if $r\geq0$ and $c\in\oH^{1,1}(S,\Z)$, and if $r=0$, then either $c$ is strictly effective or $c=0$ and $s>0$. These properties ensure that there exists a coherent sheaf $F$ on $S$ such that $v(F):=\ch(F)\sqrt{\td_S}=v$.

Once an ample class $H$ on $S$ is fixed, we will consider the moduli space $M_v(S,H)$ of Gieseker--Maruyama $H$-semistable sheaves $F$ on $S$ such that $v(F)=v$.

In order to have a well-behaved moduli space, we ask that the ample class $H$ is chosen general with respect to $v$ (see \cite[Definition~2.8]{PR:SingularVarieties}). We will not recall here the definition of generality, but we will only list the properties we will use:
\begin{itemize}
    \item if $\Pic(S)=\Z H$, then $H$ is always general with respect to $v$ (cf.\ \cite[Lemma~2.9]{PR:SingularVarieties});
    \item being general with respect to $v$ is a Zariski open condition in families (cf.\ \cite[Proposition~2.14]{PR:SingularVarieties});
    \item let $S$ and $S'$ be two projective K3 surfaces, $v$ a Mukai vector on $S$ and $v'$ a Mukai vector on $S'$; if $H$ is general with respect to $v$ and $H'$ is general with respect to $v'$, then the moduli space $M_v(S,H)$ is locally trivial deformation equivalent to the moduli space $M_{v'}(S',H')$ (see \cite[Theorem~1.7]{PR:SingularVarieties}).
\end{itemize}

If the Mukai vector $v$ is primitive and the ample class $H$ is general with respect to $v$, then the moduli space $M_v(S,H)$ is an irreducible holomorphic symplectic manifold deformation equivalent to the Hilbert scheme $\Hilb^{\frac{v^2+2}{2}}(S)$ (see \cite{OGrady:WeightTwo,Yoshioka:ModuliAbelian}). If $v$ is not primitive, we write $v=mw$, where $m>0$ and $w$ is a primitive Mukai vector. 

The following resumes the results we will need later.

\begin{thm}[\cite{PR:SingularVarieties,PR:vperp,OPR}]\label{thm:PR}
    Let $S$ be a projective K3 surface, $v$ a Mukai vector and $H$ an ample class that is general with respect to $v$. Write $v=mw$, with $m>0$ and $w$ a primitive Mukai vector such that $w^2 = 2k >0$. Then:
    \begin{enumerate}
        \item the moduli space $M_v(S,H)$ is an irreducible symplectic variety of dimension $v^2+2$ (\cite[Theorem~1.10]{PR:SingularVarieties});
        \item the locally trivial deformation equivalence class of $M_v(S,H)$ only depends on $(m,k)$ (\cite[Theorem~1.7]{PR:SingularVarieties});
        \item there exists an Hodge isometry
        \[ \lambda\colon v^\perp\longrightarrow\oH^2(M_v(S,H),\Z), \]
        where $v^\perp$ inherits the lattice and Hodge structures from the Mukai lattice $\widetilde{\oH}(S,\Z)$ and $\oH^2(M_v(S,H),\Z)$ is endowed with the BBF lattice structure (\cite[Theorem~1.6]{PR:vperp});
        \item the Fujiki constant of $M_v(S,H)$ is
        \[ C_v=\frac{(2n)!}{n! 2^n} \]
        where $2n=\dim M_v(S,H)$ (\cite[Theorem~1.7]{PR:vperp});
        \item the locally trivial monodromy group does not depend on $m$ and it is equal to
        \[ \Mon(M_v(S,H))=\mathsf{W}(v^\perp) \]
        where $\mathsf{W}(v^\perp)$ is the group of orientation preserving isometries of $v^\perp$ acting as $\pm\operatorname{id}$ on the discriminant group (\cite[Theorem~A.2]{OPR}).
    \end{enumerate}
\end{thm}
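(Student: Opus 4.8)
The plan is straightforward: this statement is a compilation of results already established in the three cited references, so I would treat each of the five items separately, match it to its source, and verify along the way that the normalisations agree with the conventions fixed earlier in the paper. Because the substantive work lives in \cite{PR:SingularVarieties}, \cite{PR:vperp} and \cite{OPR}, the proof is largely a matter of careful transcription, with the only real effort going into reconciling notation.

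For (1) I would invoke the main existence theorem of \cite{PR:SingularVarieties}, which shows that under the standing hypotheses ($H$ general with respect to $v=mw$, $w$ primitive, $w^2>0$) the space $M_v(S,H)$ is an irreducible symplectic variety. The dimension count $v^2+2$ is the standard Mukai estimate: at a stable sheaf $F$ the tangent space is $\operatorname{Ext}^1(F,F)$, and from $\chi(F,F)=-v^2$ together with $\hom(F,F)=\operatorname{ext}^2(F,F)=1$ one gets $\dim\operatorname{Ext}^1(F,F)=v^2+2$, so the smooth locus has this dimension and hence so does $M_v(S,H)$. For (2), the deformation invariance is exactly \cite[Theorem~1.7]{PR:SingularVarieties}; the point to record is that two such moduli spaces sharing the same $m$ and the same $w^2=2k$ are joined by a locally trivial family, which is what makes the deformation class an invariant of $(m,k)$ alone.

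Items (3) and (4) are the two halves of the main results of \cite{PR:vperp}: the Hodge isometry $\lambda\colon v^\perp\to\oH^2(M_v(S,H),\Z)$ and the explicit Fujiki constant. Here the step requiring attention is checking that the Hodge structure placed on $v^\perp$ (inherited from the Mukai lattice $\widetilde{\oH}(S,\Z)$) and the BBF form on $\oH^2$ are normalised compatibly, and that the quoted constant $C_v=(2n)!/(n!\,2^n)$ with $2n=v^2+2$ matches the normalisation of $q_X$ used in \cref{prop:Fujiki}. Note that $v^2=m^2w^2=2m^2k$, so $n=m^2k+1$, consistently with the dimension formula already displayed.

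Finally, for (5) I would cite \cite[Theorem~A.2]{OPR}, which identifies $\Mon(M_v(S,H))$ with $\mathsf{W}(v^\perp)$ and records the independence of $m$. The one genuine subtlety, and the step I expect to require the most care, is matching the definition of $\mathsf{W}(v^\perp)$ used in the statement with the group appearing in \cite{OPR}: one must confirm that the orientation conventions of \cref{section:orientation} and the prescribed $\pm\operatorname{id}$ action on the discriminant group coincide in both formulations. Once this identification is in place, the remaining content of the theorem is a direct reading-off from the references.
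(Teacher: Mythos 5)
Your proposal is correct and matches the paper exactly: the paper gives no proof of Theorem~\ref{thm:PR}, stating it as a compilation of results with each item cited from \cite{PR:SingularVarieties}, \cite{PR:vperp}, and \cite{OPR}, which is precisely the reading-off you describe. The extra details you supply (the Mukai dimension count via $\operatorname{Ext}^1(F,F)$ and the normalisation checks) are sound but not part of the paper's treatment.
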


\begin{defn}\label{defn: M(m,k)}
    A primitive symplectic variety locally trivially deformation equivalent to a moduli space $M_v(S,H)$, with $v=mw$ and $w^2=2(k-1)$, as in \cref{thm:PR} will be called of type $\operatorname{K3}^{[k]}_m$.
\end{defn}

\subsection{Beauville--Mukai systems and theta divisors}\label{section:BM system}
Let $S$ be a projective K3 surface such that $\Pic(S)=\Z H$, and suppose that $H^2=2d$. Let us fix a Mukai vector $v=(0,mH,ms)$. Notice that, by \cite[Lemma~2.9]{PR:SingularVarieties}, the ample class $H$ is general with respect to $v$ and hence the moduli space $M_v(S,H)$ is an irreducible symplectic variety. When $m=1$ the moduli space $M_{(0,H,s)}(S,H)$ is smooth and deformation equivalent to the Hilbert scheme $\Hilb^{d+1}(S)$.

Mapping every sheaf $F\in M_v(S,H)$ to its Fitting support gives a morphism (see \cite[Section~1.4]{Moz})
\[ p_v\colon M_v(S,H)\longrightarrow |mH|\cong\P^{m^2d+1} \]
such that the fibre over a smooth and irreducible curve $C\in|mH|$ is the Picard variety $\Pic^{\delta}(C)$, where $\delta=m(md+s)$. This fibration is equidimensional and it is a compactification of the relative Picard variety over the locus of smooth curves in $|mH|$.

In particular $M_v(S,H)$ parametrises torsion sheaves of rank $1$ on their support. A general point of $M_v(S,H)$ is of the form $i_*L$, where $i\colon C\to S$ is the closed embedding of a smooth curve $C\in|mH|$ and $L$ is a line bundle on $C$ such that $\chi(L)=ms$, i.e.\ $L$ has degree $\delta$.

The morphism $p_v$ is a lagrangian fibration (see \cref{section:LagrangianFibrations}).

By Theorem~\ref{thm:PR}, there is an isometry
\[ \lambda_v\colon (v^\perp)^{1,1}\longrightarrow\Pic(M_v(S,H)). \]
Let us distinguish two cases:
\begin{description}
    \item[$s=0$] then $(v^\perp)^{1,1}$ is a unimodular hyperbolic plane generated by the two isotropic classes $a=(-1,0,0)$ and $b=(0,0,1)$, i.e.
    \[ (v^\perp)^{1,1}=\langle a,b\rangle=\left(
    \begin{array}{cc} 0 & 1 \\ 1 & 0 \end{array} \right);
    \]
    \item[$s\neq0$] then $(v^\perp)^{1,1}$ is a non-unimodular hyperbolic plane generated by the two isotropic classes $a=(\frac{2d}{\mu},\frac{s}{\mu}H,0)$ and $b=(0,0,1)$, where $\mu=\gcd(d,s)$, i.e.
    \[ (v^\perp)^{1,1}=\langle a,b\rangle=\left(
    \begin{array}{cc} \frac{2ds^2}{\mu^2} & -\frac{2d}{\mu} \\ -\frac{2d}{\mu} & 0 \end{array} \right).
    \]
\end{description}

When $m=1$, i.e.\ $v$ is primitive, it is known that the class $\lambda_v(b)$ represents the class $p_v^*\sO(1)$. This can be checked by hand as in the proof of \cite[Lemma~6.5.(iii)]{Wieneck2016}; alternatively, one can look at \cite[Lemma~11.3]{BM:MMP}. 

The same is true for any $m\geq2$.

\begin{lem}\label{lemma:b=b}
    If $p_v\colon M_v(S,H)\to|mH|$ is as above, then $p_v^*\sO(1)=\lambda_v(b)$.
\end{lem}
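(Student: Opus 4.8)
The plan is to compute the pullback $p_v^*\sO(1)$ directly in the Mukai lattice using the isometry $\lambda_v$ and the restriction behaviour from Lemma~\ref{lem:restrictionVoisinsingular}. Since $p_v\colon M_v(S,H)\to|mH|$ is a lagrangian fibration and $\sO_{|mH|}(1)$ is the hyperplane class on $\P^{m^2d+1}$, the class $F:=p_v^*\sO(1)$ is isotropic and nef, so it lies in $(v^\perp)^{1,1}=\langle a,b\rangle$. The key is to pin down which isotropic class it equals. Both $a$ and $b$ are isotropic, so I would identify $F$ by computing a single pairing against a well-chosen test class. The natural move is to use Lemma~\ref{lem:restrictionVoisinsingular}: restricting to a smooth fibre $X_c=\Pic^\delta(C)$ over a general curve $C\in|mH|$, the kernel of $r_c$ is exactly $F^\perp$, and $F$ itself restricts to zero on the fibre. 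So I would show that $\lambda_v(b)$ restricts to zero on the fibre, while $\lambda_v(a)$ does not, which forces $F$ to be proportional to $\lambda_v(b)$; primitivity then gives equality $F=\lambda_v(b)$.

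The cleanest way to carry this out is geometrically via Mukai's description of $\lambda_v$. Recall $b=(0,0,1)$ corresponds, under the universal-sheaf construction, to the class obtained by pairing $v^\perp$ against the Mukai vector of a skyscraper sheaf $\sO_x$ at a point $x\in S$. Concretely, for a class $u\in v^\perp$, the line bundle $\lambda_v(u)$ is built from $-p_{M*}\big(p_S^*(u^\vee)\cdot \ch(\mathcal{E})\sqrt{\td_S}\big)$, where $\mathcal{E}$ is a (quasi-)universal family on $S\times M_v(S,H)$. The class $b=(0,0,1)=v(\sO_x)$ up to sign records the behaviour of sheaves near a fixed point $x$; I would argue that the restriction of $\lambda_v(b)$ to a fibre $\Pic^\delta(C)$ measures how the line bundles $L$ on a fixed curve $C$ see the point $x$, which is a degree/translation-invariant datum and hence trivial on $\Pic^\delta(C)$. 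More precisely, along a fibre the family $\mathcal{E}$ restricts to a (twisted) Poincaré bundle on $C\times\Pic^\delta(C)$, and the determinant-of-cohomology construction against the point class produces a topologically trivial line bundle on the abelian variety $\Pic^\delta(C)$. This shows $r_c(\lambda_v(b))=0$, i.e.\ $\lambda_v(b)\in\ker(r_c)=F^\perp$.

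From $q_v(\lambda_v(b))=b^2=0$ together with $\lambda_v(b)\in F^\perp$ and the fact (Lemma~\ref{lem:restrictionVoisinsingular}) that $F^\perp=\ker(r_c)$ has the isotropic line $\R F$ as its unique isotropic direction inside $(v^\perp)^{1,1}$, I would conclude $\lambda_v(b)\in\R F$. Indeed, in the hyperbolic plane $\langle a,b\rangle$ the only isotropic vectors lying in $F^\perp=\langle F\rangle^\perp$ are the multiples of $F$ itself (a nonzero isotropic vector orthogonal to $F$ in a rank-two hyperbolic lattice must be parallel to $F$). Since both $F$ and $\lambda_v(b)$ are primitive and nef (lying in the closure of the same chamber of the positive cone determined by the fibration), they must coincide, giving $p_v^*\sO(1)=\lambda_v(b)$.

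The main obstacle I expect is the explicit computation of $r_c(\lambda_v(b))$, that is, verifying that the determinant-of-cohomology line bundle attached to the point class $b$ is trivial on the abelian fibre $\Pic^\delta(C)$, and in the non-primitive case $m\geq 2$ handling the fact that $M_v(S,H)$ is singular and $\mathcal{E}$ is only a \emph{quasi}-universal family (no genuine universal sheaf exists). To circumvent the singularity issue, I would restrict everything to a general smooth fibre $X_c=\Pic^\delta(C)$, which lies in $(M_v)_{\mathrm{reg}}$ by Theorem~\ref{thm:KL instead of Sch}(2), so that the classical smooth theory and the well-defined restriction $r_c$ of Lemma~\ref{lem:restrictionVoisinsingular} apply verbatim. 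Alternatively, and perhaps more robustly, one can avoid the direct computation entirely by a deformation argument: the $m=1$ case is known (as cited, via \cite[Lemma~6.5.(iii)]{Wieneck2016} or \cite[Lemma~11.3]{BM:MMP}), the identification of $a,b$ inside $(v^\perp)^{1,1}$ is compatible with the locally trivial deformation from the $m=1$ family to the general $m$ family (Theorem~\ref{thm:PR}(2),(3) give a deformation-invariant marking of $v^\perp$), and the class of the Beauville--Mukai lagrangian fibration is itself invariant under this deformation; then $p_v^*\sO(1)=\lambda_v(b)$ follows by transporting the known equality along the family.
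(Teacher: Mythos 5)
Your main line of argument is genuinely different from the paper's. The paper does no computation on a fibre at all: it restricts to the most singular stratum $i_{m,w}\colon M_w(S,H)\hookrightarrow M_v(S,H)$, uses that $i_{m,w}^*\circ\lambda_v=m\,\lambda_w$ (\cite[Proposition~1.28]{OPR}) together with the compatibility of the two support morphisms under the Veronese embedding $\nu_m\colon|H|\to|mH|$ (so that $\nu_m^*\sO(1)=\sO(m)$), and thereby reduces the identity to the known primitive case $m=1$ on the stratum. Your route --- locating $p_v^*\sO(1)$ inside the rank-two lattice $\langle a,b\rangle$ by showing $\lambda_v(b)\in\ker(r_c)=F^\perp$ and using that an isotropic vector orthogonal to an isotropic vector in a hyperbolic plane must be proportional to it --- is a legitimate alternative, and that lattice step is correct.

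Two steps need repair, however. First, the justification that $r_c(\lambda_v(b))=0$ via a ``Poincaré bundle normalised at $x$'' is not right as stated: a quasi-universal family is only defined up to twist by $p_M^*N$, so $\mathcal{E}|_{\{x\}\times\Pic^\delta(C)}$ has no canonical trivialisation. What actually saves the computation is that $\lambda_v(u)$ is twist-independent precisely because $u\in v^\perp$, after which $\lambda_v(b)$ is identified (up to sign) with the Künneth component of $c_1(\mathcal{E})$ in $\oH^0(S)\otimes\oH^2(M_v)$; making this precise amounts to the same Chern-character computation the paper carries out in \cref{lemma:intersections}, so you gain little over a direct computation. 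Second, and more seriously, your last step asserts that $\lambda_v(b)$ is nef and lies ``in the closure of the same chamber\dots determined by the fibration'': this is not known at that point and is essentially what the lemma is meant to establish, so your argument only yields $p_v^*\sO(1)=\pm\lambda_v(b)$ until you add one more pairing (against a Kähler class, or against the section curve $\ell$ of \cref{lemma:intersections}) to fix the sign. Finally, the fallback ``transport the $m=1$ equality along a deformation to the general $m$ family'' cannot work: by \cref{thm:PR} the locally trivial deformation class of $M_{mw}(S,H)$ depends on $m$ (the dimensions $2(k-1)m^2+2$ already differ), so no such family exists; the paper's restriction to the most singular stratum is exactly the correct replacement for that idea.
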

Notice in particular that $p_v^*\sO(1)$ is primitive.
\begin{proof}
    Let us write $v=mw$, with $m\geq2$. Then we have a commutative diagram
    \begin{equation}\label{eqn:w in v}
        \xymatrix{
        M_v(S,H) \ar[d]_{p_v} & M_w(S,H) \ar[l]_{i_{m,w}} \ar[d]^{p_w} \\
        |mH| & |H| \ar[l]^{\nu_m}
        }
    \end{equation}
    where $i_{m,w}$ is the closed embedding of $M_w(S,H)$ as the most singular stratum of $M_v(S,H)$ and $\nu_m$ is the Veronese embedding. (More precisely, the composition of $\nu_m$ and a linear embedding is the Veronese embedding.)
    It follows that $\nu_m^*\sO(1)=\sO(m)$.

    Let us write $p_v^*\sO(1)=\lambda_v(c)$, for some class $c\in v^\perp$. 
    Then, by \cite[Proposition~1.28]{OPR} we have that
    \[ i_{m,w}^*(p_v^*\sO(1))=m\lambda_w(c). \] 
    
    By the commutativity of the square (\ref{eqn:w in v}) we eventually get
    \[ m\lambda_w(c)=i_{m,w}^*(p_v^*\sO(1))=p_w^*(\nu_m^*\sO(1))=p_w^*\sO(m)=m\lambda_w(b) \]
    from which the claim follows.
\end{proof}

\begin{rem}
    The same proof shows that \cref{lemma:b=b} holds more generally for projective K3 surfaces of any Picard rank and any $v$-general polarization.
\end{rem}

The following result is \cite[Theorem~1.7]{PR:SingularVarieties}, we state it here for sake of completeness.

\begin{lem}\label{lem:any MS is def of BM}
    Let $X$ be a variety of type $\operatorname{K3}^{[k]}_m$. Then $X$ is locally trivial deformation equivalent to a Beauville--Mukai system $\pi\colon M_v(S,H)\longrightarrow |mH|$. 
\end{lem}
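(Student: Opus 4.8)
The assertion is essentially a reformulation of \cref{thm:PR}(2), and the plan is to exhibit a single Beauville--Mukai system realising the deformation type $\operatorname{K3}^{[k]}_m$ and then to appeal to the fact that the locally trivial deformation class of a moduli space $M_v(S,H)$ depends only on the pair $(m,k)$, equivalently only on $m$ and $w^2$, where $v=mw$ with $w$ primitive.

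First I would produce the model. By the surjectivity of the period map for K3 surfaces there exists a projective K3 surface $S$ with $\Pic(S)=\Z H$ and $H^2=2(k-1)$. Set $v=(0,mH,0)$, so that $v=mw$ with $w=(0,H,0)$ primitive and $w^2=H^2=2(k-1)$. Since $\Pic(S)=\Z H$, the polarisation $H$ is automatically general with respect to $v$ (cf.\ \cite[Lemma~2.9]{PR:SingularVarieties}), so $M_v(S,H)$ is an irreducible symplectic variety by \cref{thm:PR}(1), of dimension $v^2+2=2(k-1)m^2+2$. By construction it carries the support morphism $p_v\colon M_v(S,H)\to|mH|$ described in \cref{section:BM system}; that is, it is a Beauville--Mukai system. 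Because $v=mw$ with $w^2=2(k-1)$, this moduli space is of type $\operatorname{K3}^{[k]}_m$ in the sense of \cref{defn: M(m,k)}.

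Finally I would conclude. Let $X$ be of type $\operatorname{K3}^{[k]}_m$; by definition $X$ is locally trivial deformation equivalent to some $M_{v_0}(S_0,H_0)$ with $v_0=mw_0$ and $w_0^2=2(k-1)$. Both $X$ and the Beauville--Mukai system $M_v(S,H)$ constructed above thus correspond to the same pair $(m,k)$, so by \cref{thm:PR}(2) they are locally trivial deformation equivalent. Composing the two deformation equivalences yields the claim.

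I do not expect a genuine obstacle here beyond bookkeeping: the whole weight of the statement rests on \cref{thm:PR}(2), namely that the locally trivial deformation class is determined by $(m,k)$, which we are entitled to assume. The only points that require care are the numerical matching $w^2=2(k-1)$ dictated by the convention of \cref{defn: M(m,k)} and the verification that $H$ is $v$-general, both of which are immediate in the present rank-zero, Picard-rank-one situation.
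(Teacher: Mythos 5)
Your proposal is correct and follows essentially the same route as the paper: both arguments construct a rank-zero Mukai vector $v=mw$ with $w=(0,H,s)$ realising the same invariants $(m,k)$, note that the resulting $M_v(S,H)$ is a Beauville--Mukai system, and conclude by \cref{thm:PR}(2) (i.e.\ \cite[Theorem~1.7]{PR:SingularVarieties}) that the locally trivial deformation class depends only on $(m,k)$. The only cosmetic differences are that you explicitly invoke surjectivity of the period map to produce the Picard-rank-one model, whereas the paper takes its existence for granted and first disposes of the case where the defining Mukai vector is already of the required form.
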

\begin{proof}
By definition, since $X$ is of type $\operatorname{K3}^{[k]}_m$, there exists a projective K3 surface $T$, a primitive Mukai vector $w_T\in\widetilde\oH(T,\Z)$ with $v_T^2=2k$ and an ample class $H_T\in\Pic(T)$ that is $v_T$-general (here $v_T=mw_T$), such that $X$ is locally trivial deformation equivalent to $M_{v_T}(T,H_T)$. 

If $v_T$ is of the form $(0,m\xi,mt)$, then $M_{v_T}(T,H_T)$ is a Beauville--Mukai system and we are done. Otherwise, let $S$ be a projective K3 surface, $w_S=(0,\ell,s)$ a Mukai vector with $w_S^2=2k$, and $H_S\in\Pic(S)$ an ample line bundle that is $v_S$-general (again, here $v_S=mw_S$). Then by \cite[Theorem~1.7]{PR:SingularVarieties} the two moduli spaces $M_{v_T}(T,H_T)$ and $M_{v_S}(S,H_S)$ are locally trivial deformation equivalent, and $M_{v_S}(S,H_S)$ is a Beauville--Mukai system by construction. This concludes the proof.
\end{proof}

\section{The Huybrechts--Riemann--Roch polynomial of moduli spaces of sheaves on K3 surfaces}\label{section:HRR for MSK3}

Recall from Definition~\ref{defn: M(m,k)} that a variety of type $\operatorname{K3}^{[k]}_m$ is a primitive symplectic variety that is locally trivial deformation equivalent to a moduli space of sheaves on a K3 surfaces as in Section~\ref{section:moduli spaces of sheaves on K3}.

The aim of this section is to prove the following statement.

\begin{thm}\label{thm: RR of K3n type}
Let $X$ be of type $\operatorname{K3}^{[k]}_m$, then the Huybrechts--Riemann--Roch polynomial of $X$ is of $K3^{[n]}$-type, where $n = km^2+1$.
\end{thm}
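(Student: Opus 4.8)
The plan is to reduce to an explicit model and then read off the entire polynomial from one carefully chosen family of line bundles. Since $\RR_X$ is a locally trivial deformation invariant (\cref{RRpolynomial}), by \cref{lem:any MS is def of BM} I may assume that $X=M_v(S,H)$ is a Beauville--Mukai system $p\colon X\to|mH|\cong\P^n$, where $n=\tfrac12\dim X$; I am moreover free to take $v=(0,mH,0)$, so that the fibre over a smooth curve $C\in|mH|$ is $\Pic^{g-1}(C)$ (with $g$ the genus of $C$), a torsor under $\operatorname{Jac}(C)$ carrying its principal theta polarisation $\Theta$. As $\RR_X$ has degree $n$, it suffices to determine $\chi(X,L)$ for $L$ in a family realising enough values of $q_X$.

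Following \cref{section:BM system} with $s=0$, the lattice $(v^\perp)^{1,1}$ is the unimodular hyperbolic plane $\langle a,b\rangle$ with $(a,b)=1$, and $\lambda_v(b)=p^*\sO_{\P^n}(1)$ by \cref{lemma:b=b}. Put $A:=\lambda_v(a)$. Since $(a,b)=1$, the class $A$ does not lie in $\ker r_b=\lambda_v(b)^\perp$, so by \cref{lem:restrictionVoisinsingular} its restriction to a general fibre is a generator of the rank-one image of $r_b$; as that image is generated by the primitive ample class and $\operatorname{NS}$ of a general $\Pic^{g-1}(C)$ is $\Z\Theta$, after fixing the sign of $a$ I get $A|_{\mathrm{fibre}}=\Theta$. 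Because $\lambda_v$ is an isometry, $q_X(A+jb)=(a+jb)^2=2j$.

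I would then compute $\chi(X,A+jb)$ through the fibration. As $A|_{\mathrm{fibre}}=\Theta$ is a principal polarisation, $h^0=1$ and $h^{>0}=0$ on the fibre, so relative vanishing yields $R^{>0}p_*A=0$ and $p_*A\cong\sO_{\P^n}(e)$ for some $e\in\Z$. By the projection formula and the Leray spectral sequence,
\[ \chi(X,A+jb)=\chi\!\left(\P^n,\sO_{\P^n}(e+j)\right)=\binom{e+j+n}{n}. \]
Comparing with $\chi(X,A+jb)=\RR_X\!\left(q_X(A+jb)\right)=\RR_X(2j)$ and treating both as polynomials in $j$ gives $\RR_X(t)=\binom{t/2+e+n}{n}$. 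To pin down $e$ I use one further value: $X$ has rational singularities, so $\chi(\sO_X)=\chi(\sO_{\widetilde X})$ for a resolution $\widetilde X$, and by Hodge symmetry together with the extension of reflexive forms one has $h^{0,i}(\widetilde X)=\dim\oH^0(X,\Omega^{[i]}_X)$, which equals $1$ for $i$ even and $0$ for $i$ odd since $X$ is symplectic; hence $\RR_X(0)=\chi(\sO_X)=n+1$. Therefore $\binom{e+n}{n}=n+1$, so $e=1$ and $\RR_X(t)=\binom{t/2+n+1}{n}$, i.e.\ the polynomial is of $\operatorname{K3}^{[n]}$-type. As a check, its leading coefficient $\tfrac{1}{2^n n!}$ matches the Fujiki constant $C_v=\tfrac{(2n)!}{2^n n!}$ of \cref{thm:PR}.

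The main obstacle is the relative vanishing step over the singular total space: one must prove $R^{>0}p_*A=0$ and that $p_*A$ is genuinely a line bundle on all of $\P^n$, not merely over the smooth locus of $p$. This is delicate exactly along the discriminant, where the fibres are singular compactified Jacobians; I expect to control it with the freeness and base-change statements already used in \cref{prop:higher direct images are free,prop:Nakayama}, together with the fact that the theta polarisation on an integral compactified Jacobian still has $h^0=1$ and no higher cohomology. The identification $A|_{\mathrm{fibre}}=\Theta$ is a secondary technical point, handled via \cref{lem:restrictionVoisinsingular} and the integrality of the general member of $|mH|$.
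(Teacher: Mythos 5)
Your overall strategy coincides with the paper's: reduce to the Beauville--Mukai system with $v=(0,mH,0)$, exhibit a class restricting to the principal theta polarisation on the fibres, push forward along $p$ to convert $\chi(X,\cdot)$ into $\chi(\P^n,\sO(\cdot))$, and match binomial coefficients. (Your $A=\lambda_v(a)$ is exactly $\Theta_v+b_v$ in the paper's notation, so your parameter $j$ is the paper's twist shifted by one.) But the two steps that carry all the weight are not actually established in your write-up. The most serious is the relative vanishing: you flag $R^{>0}p_*A=0$ and the local freeness of $p_*A$ on all of $\P^n$ as the main obstacle, but the tools you propose, \cref{prop:higher direct images are free} and \cref{prop:Nakayama}, concern pushforward to the \emph{deformation base} (the Kuranishi space, resp.\ a disc), not pushforward along the lagrangian fibration $p$ itself; they cannot supply this step. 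The paper instead invokes an external relative-vanishing theorem (\cite[Theorem~3]{Rios2020}) applied to a $p$-ample class, and the $p$-ampleness is obtained from the \emph{effectivity} of the relative theta divisor $D_v$, constructed geometrically via Brill--Noether theory in \cref{prop:pex}. Since your argument never produces an effective representative of $A$ (or of $A-b_v$), this step remains unproved.

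The second gap is the identification $A|_{X_b}=\Theta$. \cref{lem:restrictionVoisinsingular} says the image of $r_b$ has rank one and is generated by \emph{an} ample class; it does not say this image is a primitive subgroup of $\oH^2(X_b,\Z)$, so $r_b(A)$ could a priori be a proper multiple of $\Theta$, i.e.\ a non-principal polarisation, which would change the fibrewise $h^0$ and hence the whole count. The correct fix is precisely the Fujiki-constant computation you relegate to a final ``check'': $\int_{X_b}(A|_{X_b})^n=n!$ by \cref{prop:Fujiki} and \cref{thm:PR}, which forces the polarisation to be principal; this is how the paper pins down $q(\Theta_v,b_v)=1$. A smaller issue: $\binom{e+n}{n}=n+1$ admits the second integer solution $e=-n-2$ when $n$ is even, so determining $e$ from $\chi(\sO_X)=n+1$ alone requires an additional positivity input (the paper avoids this by showing $p_*\sO(D_v)=\sO_{\P^n}$ directly from $h^0(X,\sO(D_v))=1$, again using effectivity and $q(\Theta_v)<0$). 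In short, the skeleton is right, but filling it in essentially forces you to reconstruct the effective theta divisor and the vanishing theorem that constitute the paper's \cref{section:theta}.
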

The result is known when $m=1$ (i.e.\ when the moduli space is smooth and of $\operatorname{K3}^{[n]}$-type), and when $(m,k)=(2,1)$. The latter is not explicitly stated in the literature, but it essentially follows from \cite{Rios2020}.

Up to locally trivial deformation, the proof will be reduced to consider a particular Beauville--Mukai system. In particular, we will need to perform some computations about the theta divisor, which we perform in Section~\ref{section:theta} below.

\subsection{An effective relative theta divisor}\label{section:theta}

Let $S$ be a projective K3 surface and $H$ an ample class such that $H^2=2d$. We further assume that $\Pic(S)=\Z H$. Let us consider the Mukai vector 
\[ v=(0,mH,0) \]
so that the moduli space $M_v(S,H)$ is a primitive symplectic variety and we are in the setting of Section~\ref{section:BM system}. In particular, there is a lagrangian fibration
\[
    \pi\colon M_v(S,H)\longrightarrow |mH|\cong\P^{m^2d+1}
\]
that compactifies the Jacobian variety of degree $m^2d$.

In this case we have that
    \[ \Pic(M_v(S,H))\cong\langle a,b\rangle=\left(
    \begin{array}{cc} 0 & 1 \\ 1 & 0 \end{array} \right),
    \]
    where $a=(-1,0,0)$ and $b=(0,0,1)$ (see \cref{section:BM system}).

Let us now consider the following subvariety
\[ D_v:=\{F\in M_v(S,H)\mid h^0(F)\geq1\}. \]
By Brill--Noether theory we can see that $D$ is non-empty and has codimension $1$. In fact 
\begin{align*} 
\dim D_v & = m^2d+1+\rho(g,d,r) \\
 & =m^2d+1+\rho(m^2d+1,m^2d,0) \\
 & = 2m^2d+1 = \dim M_v-1,
\end{align*}
so that $D_v$ is a Weil divisor. On the other hand, by \cite[Theorem~A]{KaledinLehnSorger} (when $m>2$ or $m=2$ and $d>1$) and \cite[Theorem~1.1]{PR:Factoriality} (when $m=2$ and $d=1$), we know that $M_v(S,H)$ is locally factorial, so that $D_v$ is a Cartier divisor.

\begin{defn}\label{defn:theta}
  We call $D_v$ the effective relative theta divisor, and we denote by $\Theta_v$ the class of $D_v$ in $\oH^{1,1}(M_v(S,H),\Z)$.
\end{defn}
    
The main result of this section is the following.
\begin{prop}\label{prop:pex}
    $D_v$ is a prime exceptional divisor and its class $\Theta_v$ satisfies $\Theta_v^2=-2$ and $\divv(\Theta_v)=1$. 
\end{prop}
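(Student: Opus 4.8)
The plan is to show that $D_v$ is a prime divisor and then to determine its class $\Theta_v$ inside the rank-two lattice $\Pic(M_v(S,H))=\langle a,b\rangle$, from which the two numerical assertions and prime-exceptionality follow by a one-line computation.

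\emph{Primeness.} We already have that $D_v$ is an effective Cartier divisor of pure codimension one. To see it is prime, I would restrict the fibration $\pi$ over the open locus $U\subset|mH|$ of smooth curves. There $\pi^{-1}(U)\to U$ is the relative (compactified) Jacobian of degree $\delta=m^2d=g-1$, where $g=p_a(C)=m^2d+1$, and $D_v\cap\pi^{-1}(U)$ is the relative theta divisor, with fibres the classical theta divisors $\Theta_C\subset\Pic^{g-1}(C)$. Each $\Theta_C$ is irreducible and reduced and $U$ is irreducible, so $D_v\cap\pi^{-1}(U)$ is irreducible. Any hypothetical component of $D_v$ over the discriminant $|mH|\setminus U$ would lie in $\pi^{-1}(|mH|\setminus U)$ and, since the theta condition still cuts out codimension one inside each such fibre, would have codimension at least two in $M_v(S,H)$; hence it cannot be a component of the codimension-one divisor $D_v$. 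Therefore $D_v$ is the closure of its restriction to $\pi^{-1}(U)$ and is prime.

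\emph{The class.} Write $\Theta_v=\alpha a+\beta b$ with $\alpha,\beta\in\Z$, recalling that $\langle a,b\rangle$ is the hyperbolic plane with $q_X(a)=q_X(b)=0$, $q_X(a,b)=1$, and that $b=\pi^*\sO(1)$ by \cref{lemma:b=b}. The divisor $D_v$ is the theta divisor attached to the structure sheaf, $D_v=\{F\mid \Hom(\sO_S,F)\neq0\}$, and $v(\sO_S)=(1,0,1)$ is orthogonal to $v=(0,mH,0)$ for the Mukai pairing. Hence, by the determinant (theta) line bundle construction underlying the Mukai isometry $\lambda_v$ of \cref{thm:PR}, the class of $D_v$ is $\pm\lambda_v(v(\sO_S))$. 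Since $v(\sO_S)=(1,0,1)=b-a$ already lies in $(v^\perp)^{1,1}=\langle a,b\rangle$, we conclude $\Theta_v=\pm(b-a)$. Consequently
\[ \Theta_v^2=q_X(b-a)=q_X(b)-2q_X(a,b)+q_X(a)=-2, \]
and, because $q_X(\Theta_v,b)=\pm1$ with $b\in\oH^2(M_v(S,H),\Z)$, the ideal $q_X(\Theta_v,\oH^2(M_v(S,H),\Z))$ contains $1$, so $\divv(\Theta_v)=1$.

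\emph{Conclusion and main difficulty.} As $D_v$ is a prime Cartier divisor with $q_X(\Theta_v)=-2<0$, it is prime exceptional by \cref{defn:pex}. I expect the crux to be the identification $\Theta_v=\pm\lambda_v(v(\sO_S))$: this relies on matching the Brill--Noether theta locus with the determinant-line-bundle normalisation of $\lambda_v$ from \cite{PR:vperp}, signs included. A route that sidesteps this is available for one of the two coefficients: restricting to a general fibre $M_b=\Pic^{g-1}(C)$, on which $b=\pi^*\sO(1)$ vanishes, gives $\Theta_v|_{M_b}=\alpha\,(a|_{M_b})$; since this restriction is the principal polarization $\Theta_C$, hence a primitive class (cf.\ \cref{lem:restrictionVoisinsingular}), one obtains $\alpha=\pm1$ and therefore $\divv(\Theta_v)=1$ unconditionally, leaving only the value of $\beta$ to be extracted from the theta line bundle.
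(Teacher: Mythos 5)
Your proposal is structured differently from the paper's proof, and the load-bearing step is not actually carried out. Everything numerical hinges on the identification $\Theta_v=\pm\lambda_v(v(\sO_S))=\pm\lambda_v(b-a)$, which you assert ``by the determinant (theta) line bundle construction underlying the Mukai isometry'' and then immediately flag as the crux whose normalisation you have not matched. That is the whole content of $\Theta_v^2=-2$: without it you only know $\Theta_v=\alpha a+\beta b$ with $\alpha=\pm1$, so $\Theta_v^2=2\alpha\beta$ is undetermined, and in particular you have not even shown $q(\Theta_v)<0$, which your route needs in order to invoke Definition~\ref{defn:pex} for prime exceptionality. The determinantal description is genuinely delicate here: $v$ is non-primitive and $M_v(S,H)$ is singular, so the existence of a determinant line bundle on the moduli space (rather than on the stack or the stable locus), the fact that its canonical section cuts out $D_v$ with multiplicity one, and the sign conventions of $\lambda_v$ in \cite{PR:vperp} all require an argument. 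The paper avoids this entirely: it produces an explicit ruling of $D_v$ by rational curves (Serre duality sends $F=i_*L$ with $h^0\geq1$ to the cokernel $\sO_\xi$ of $L\hookrightarrow\omega_C$, giving $q\colon D_v\dashrightarrow S^{[m^2d]}$ with $\P^1$-fibres $\P\oH^0(I_\xi(mH))$), deduces prime exceptionality and the proportionality $\Theta_v^\vee\sim\ell$ from \cite{LMP}, computes $\ell=\lambda(a)-\lambda(b)$ by a Grothendieck--Riemann--Roch calculation against the universal family on $S\times L$, and finally pins the proportionality constant by showing $q(\Theta_v,b_v)=1$ via the Fujiki relation and $\int_{\Pic^{m^2d}(C)}\Theta^n=n!$.

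Two of your partial steps are sound and worth keeping. The fibre-restriction argument for the divisibility is correct and is essentially the same input the paper uses ($\Theta_v$ restricts to a principal, hence primitive, polarization on a general fibre, forcing $\alpha=q(\Theta_v,b)=\pm1$), so $\divv(\Theta_v)=1$ does follow unconditionally from what you wrote. Your irreducibility discussion is also in the right spirit, though the claim that the theta condition cuts out codimension one in every fibre over the discriminant is not justified for the non-reduced and reducible curves that occur in $|mH|$ when $m\geq2$; to complete your route you would either need to control those fibres or, as the paper implicitly does, rely on the ruling to identify $D_v$ with the closure of the Brill--Noether locus over the smooth curves. To close the real gap, you should either carry out the determinant-line-bundle identification in the singular, non-primitive setting, or replace it with an explicit test curve in $D_v$ (the pencil $\P\oH^0(I_\xi(mH))$ is the natural candidate) against which to evaluate $\lambda_v(a)$ and $\lambda_v(b)$.
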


\begin{rem}
    The fact that $D_v$ is prime exceptional was already remarked in \cite[Section~4.2]{LMP}. Our improvement with respect to their result is that we explicitly compute its degree and divisibility.
\end{rem}

We will dedicate the rest of the section to prove the theorem. Our first claim is the following.
\begin{lem}\label{lemma:uniruled}
There exists a rational morphism $q\colon D_v\dashrightarrow S^{[m^2d]}$ whose general fibre is isomorphic to $\P^{1}$.
\end{lem}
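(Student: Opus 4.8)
The plan is to construct $q$ explicitly on the dense open locus of $D_v$ parametrising sheaves of the form $F=i_*L$, where $i\colon C\hookrightarrow S$ is a smooth irreducible curve $C\in|mH|$ and $L$ is a line bundle on $C$ with $h^0(C,L)=1$, and then to identify its general fibre with a pencil of curves in $|mH|$.

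First I would record the numerology. By adjunction a smooth curve $C\in|mH|$ has genus $g$ with $2g-2=(mH)^2=2m^2d$, so $g=m^2d+1$; in particular the degree $\delta=m^2d$ of $L$ (computed in Section~\ref{section:BM system} with $s=0$) equals $g-1$. Thus, fibrewise over the locus of smooth curves, $D_v$ restricts to the classical theta divisor $\Theta_C=\{L\in\Pic^{g-1}(C)\mid h^0(L)\geq1\}\subset\Pic^{g-1}(C)$. By Riemann--Roch $h^0(L)=h^1(L)$ for $L\in\Pic^{g-1}(C)$, and the general point of $\Theta_C$ satisfies $h^0(L)=1$; hence the general point of $D_v$ indeed has $h^0(F)=h^0(C,L)=1$. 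For such a general $F=i_*L$ the unique (up to scalar) section of $L$ vanishes along an effective divisor $Z\in C^{(g-1)}$, that is, a length-$(g-1)=m^2d$ subscheme of $C$, and hence of $S$; this assigns to $F$ a point $[Z]\in S^{[m^2d]}$ and defines the rational map $q\colon D_v\dashrightarrow S^{[m^2d]}$. Equivalently, $q$ is induced by the fibrewise Abel--Jacobi parametrisation $C^{(g-1)}\to\Theta_C$, which is birational precisely because the general point of $\Theta_C$ has $h^0=1$.

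Finally I would compute the general fibre by inverting $q$. Fix a general $[Z]\in S^{[m^2d]}$, so that $Z$ consists of $m^2d$ points of $S$ in general position. On the K3 surface $\chi(\sO_S(mH))=2+\tfrac12(mH)^2=m^2d+2$, and $mH$ is big and nef, so $h^0(S,\sO_S(mH))=m^2d+2$ and $\dim|mH|=m^2d+1$. Imposing passage through the $m^2d$ general points of $Z$ cuts out a $2$-dimensional subspace of sections, hence a pencil $\P^1_Z\subset|mH|$ of curves containing $Z$. For each smooth member $C\in\P^1_Z$ the subscheme $Z$ is an effective divisor on $C$ and $L:=\sO_C(Z)\in\Pic^{g-1}(C)$ has $h^0\geq1$, so that $i_*L\in D_v$ with $q(i_*L)=[Z]$; conversely every point of $q^{-1}([Z])$ arises in this way. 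Therefore the general fibre of $q$ is identified with $\P^1_Z\cong\P^1$, as claimed.

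I expect the main obstacle to be the general-position step. Concretely, one must verify both that the general $m^2d$ points of $S$ impose independent conditions on $|mH|$ (so that the fibre is exactly a pencil and not of higher dimension, which also forces $q$ to be dominant by the dimension count $\dim D_v-1=2m^2d=\dim S^{[m^2d]}$) and that the general member of the pencil $\P^1_Z$ is smooth and irreducible, so that $\sO_C(Z)$ is a genuine line bundle and $i_*\sO_C(Z)$ is a well-defined point of $M_v(S,H)$. The first is an evaluation-map rank computation and the second a Bertini-type argument for the linear system $|mH|$ with base locus $Z$; both are standard but require the hypothesis $\Pic(S)=\Z H$ to control degenerations of curves in $|mH|$.
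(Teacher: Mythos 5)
Your proof is correct, and it arrives at the same geometric picture as the paper -- the general fibre is the pencil of curves in $|mH|$ passing through a prescribed length-$m^2d$ subscheme -- but via the Serre-dual construction. The paper sends a general $F=i_*L$ (with $\chi(L)=0$ and $h^0(L)=1$) to the cokernel $\xi$ of the unique nonzero map $L\hookrightarrow\omega_C$ furnished by Serre duality, so that $L=\omega_C(-\xi)$ and the fibre over $\xi$ is $\P\oH^0(I_\xi(mH))$; you instead send $F$ to the zero scheme $Z$ of the unique section of $L$ itself, so that $L=\sO_C(Z)$ and the fibre over $[Z]$ is $\P\oH^0(I_Z(mH))$. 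The two maps differ by the involution $L\mapsto\omega_C\otimes L^{-1}$ of the relative theta divisor, and either one proves the lemma as stated; your reduction to the classical theta divisor in $\Pic^{g-1}(C)$ via the numerology $\delta=m^2d=g-1$ is a clean way to see that the generic point of $D_v$ has $h^0=1$. The paper's normalisation is slightly more convenient for what comes next: its fibre carries the explicit flat family $0\to\pi_S^*\sO_S\otimes\pi_L^*\sO_L(-1)\to\pi_S^*I_\xi(mH)\to\cF\to 0$, which is exactly what is fed into \cref{lemma:intersections} to compute the class $\ell$ of the ruling curve, whereas with your convention one would have to dualise the family first; since any ruling curve of the prime exceptional divisor $D_v$ has class proportional to $\Theta_v^\vee$, this makes no difference to the lemma itself. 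The loose ends you flag -- that $m^2d$ general points impose independent conditions on $|mH|$, that the general member of the pencil through $Z$ is integral and smooth (using $\Pic(S)=\Z H$), and that $h^0(\sO_C(Z))=1$ for the general member so that the whole pencil genuinely lies in the domain of $q$ -- are real but standard, and the paper's own proof leaves the exactly analogous points (which sections of $I_\xi(mH)$ cut out integral curves on which the cokernel is a stable point of $M_v(S,H)$) equally implicit.
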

\begin{proof}
Let $F=i_*L$ be a general sheaf in $D_v$. In particular $L$ is a line bundle of degree $m^2d$ on a smooth curve $C\in|mH|$. Since $\chi(L)=0$ and $h^0(L)\geq1$, by Serre duality it follows that there exists a non-trivial morphism $\epsilon\in\Hom(L,\omega_C)$, where $\omega_C$ is the canonical line bundle of $C$. In particular there exists a short exact sequence 
\[ 0\to L\stackrel{\epsilon}{\to} \omega_C\to \sO_\xi\to0 \]
where $\xi$ is a $0$-dimensional subscheme of $C$ of length $\ell$. Moreover, since $\chi(L)=0$, we get that $\ell=\chi(\sO_\xi)=\chi(\omega_C)=m^2d$.

Notice that this construction works in families, so that this defines a rational morphism $q\colon D_v\dashrightarrow S^{[m^2d]}$ as claimed.

Let us now describe the general fibre of $q$.
If $\xi\in S^{[m^2d]}$, then there is a commutative diagram
\[ 
\xymatrix{
 & &  0\ar@{->}[d]  &  0\ar@{->}[d] &  \\
0\ar@{->}[r] & \sO_S\ar@{->}[r]\ar@{=}[d] & I_\xi(mH)\ar@{->}[r]\ar@{->}[d] &  F\ar[r]\ar[d] & 0 \\
0\ar@{->}[r] & \sO_S\ar@{->}[r] & \sO_S(mH)\ar@{->}[r]\ar@{->}[d] & \omega_C\ar@{->}[r]\ar@{->}[d] & 0 \\
 & & \sO_\xi\ar@{=}[r]\ar[d] & \sO_\xi\ar[d] & \\
  & &  0  &  0 &  .
}
\]
The fibre $q^{-1}(\xi)$ consists of those sheaves $F\in M_v(S,H)$ such that there exists a short exact sequence 
\[ 0\to\sO_S\to I_\xi(mH)\to F\to 0, \]
which corresponds to the choice of a section $s\in\oH^0(I_\xi(mH))$. A simple computation shows that $h^0(I_\xi(mH))\geq2$, therefore, if $\xi$ is very general, we have $h^0(I_\xi(mH))=2$ and the lemma is proved.
\end{proof}

By Lemma~\ref{lemma:uniruled} we get that $D_v$ is uniruled. Moreover, the general rational curve ruling $Y$ is smooth, so that $D_v$ is prime exceptional (cf.\ \cref{section:pex}). Notice also that the class $\ell$ of the general fibre of $q\colon D_v\dashrightarrow S^{[m^2d]}$ is then proportional to the dual class $\Theta_v^\vee$.
\medskip

We want to give a modular interpretation of the general fibre of $q\colon D_v\dashrightarrow S^{[m^2d]}$, in order to be able to compute its class in $(v^\perp)^{1,1}\otimes\Q$. Let then $\xi\in S^{[m^2d]}$ be a general point and let us consider $L=\P\oH^0(I_\xi(mH))$. We denote by $\pi_S$ and $\pi_L$ the projections from $S\times L$ to $S$ and $L$, respectively. There exists an injective morphism of sheaves (see \cite[Section~2.2, Appendix]{Perego:2-factoriality}),
\begin{equation}\label{eqn:cF} 
\pi_S^*\sO_S\otimes\pi_L^*\sO_L(-1)\hookrightarrow\pi_S^*I_\xi(mH), 
\end{equation}
and we denote by $\cF$ its cokernel. Then $\cF$ is a sheaf on $S\times L$, flat over $L$, that parametrises semistable sheaves in $M_v$. The classifying morphism
\[ \phi_{L,\cF}\colon L\to M_v(S,H) \]
is not constant, and it defines a line in $M_v(S,H)$ that we denote by $L$ again. 

Let $\ell\in\oH_2(M_v(S,H),\Z)$ be the homology class of $L$.
Using the isomorphism (\ref{eqn:iso}) we can write $\ell$ as a rational linear combination of the classes $a=(-1,0,0)$ and $b=(0,0,1)$. 

\begin{lem}\label{lemma:intersections}
With notations as above,
\[ 
\ell.\lambda(a)=-1\qquad\mbox{ and }\qquad \ell.\lambda(b)=1.
\]
\end{lem}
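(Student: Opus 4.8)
The plan is to compute both intersection numbers directly from the universal family $\cF$ on $S\times L$ just constructed, via the Mukai morphism $\lambda=\lambda_v$. Write $p_S,\varpi$ for the projections of $S\times L$ onto $S$ and $L\cong\P^1$, and let $h\in\oH^2(L,\Z)$ be the hyperplane class. The classifying map $\phi=\phi_{L,\cF}\colon L\to M_v(S,H)$ satisfies $\int_L\phi^*D=\ell.D$ for every $D\in\oH^2(M_v(S,H),\Z)$, and the defining formula for the Mukai morphism expresses $\phi^*\lambda_v(c)$ through the relative Mukai vector $v(\cF)=\ch(\cF)\cdot p_S^*\sqrt{\td_S}$; concretely $\ell.\lambda_v(c)=\int_{S\times L}v(\cF)\cdot p_S^*(c^\vee)$ (top-degree part), with the overall sign normalised by Lemma~\ref{lemma:b=b}, i.e.\ by $\ell.\lambda_v(b)=1$. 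Since both $a$ and $b$ have vanishing $\oH^2(S)$-component, here $c^\vee=c$ and no sign ambiguity survives.

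First I would compute $\ch(\cF)$ from the exact sequence \eqref{eqn:cF}. The key point is the identity $\ch(I_\xi(mH))=1+mH$: indeed $I_\xi(mH)$ has rank $1$ and $c_1=mH$, while $\ch_2=\tfrac12\big((mH)^2-2\,\mathrm{length}(\xi)\big)=0$ because $\mathrm{length}(\xi)=m^2d=\tfrac12(mH)^2$ (the same length computation as in Lemma~\ref{lemma:uniruled}). As the sub-sheaf in \eqref{eqn:cF} is $\varpi^*\sO_L(-1)$, with $\ch=1-h$, I obtain $\ch(\cF)=(1+mH)-(1-h)=mH+h$, and hence, using $\sqrt{\td_S}=1+[\mathrm{pt}]$ on the K3 surface $S$,
\[ v(\cF)=(mH+h)(1+[\mathrm{pt}])=mH+h+[\mathrm{pt}]\cdot h \]
modulo classes that vanish for degree reasons (here $[\mathrm{pt}]$ is the point class of $S$).

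It then remains to extract the top-degree component, lying in $\oH^4(S)\otimes\oH^2(L)$, of $v(\cF)\cdot p_S^*a$ and of $v(\cF)\cdot p_S^*b$, and to integrate. Writing $a=(-1,0,0)$ as the class $-1\in\oH^0(S)$ gives $v(\cF)\cdot p_S^*a=-v(\cF)$, whose top part is $-[\mathrm{pt}]\cdot h$, so $\ell.\lambda(a)=-1$; writing $b=(0,0,1)$ as $[\mathrm{pt}]$ gives $v(\cF)\cdot p_S^*b=[\mathrm{pt}]\cdot h$ (all other products vanishing), so $\ell.\lambda(b)=1$. As an independent check of the latter I would note that $\pi|_L$ maps $L$ isomorphically onto the pencil of curves of $|mH|$ through $\xi$, that is a line in $|mH|\cong\P^{m^2d+1}$; combined with $\lambda(b)=\pi^*\sO(1)$ from Lemma~\ref{lemma:b=b} this re-proves $\ell.\lambda(b)=1$ and fixes the sign convention above.

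The main point requiring care is the normalisation of the Mukai morphism: its definition involves a quasi-universal family of some similitude $\rho$ and the Mukai dual $c^\vee$, so one must check that the two numbers are insensitive to these choices. The dual is harmless because $a,b\in\oH^0\oplus\oH^4$, and the similitude is already built into the statement that $\lambda_v$ is the isometry of Theorem~\ref{thm:PR}. Finally, since only a quasi-universal sheaf exists in general, $\cF$ is determined only up to twisting by a line bundle $\varpi^*N$ pulled back from $L$; such a twist multiplies $\ch(\cF)$ by $1+(\deg N)h$, adding only an $\oH^2(S)\otimes\oH^2(L)$ term and leaving the top-degree components—hence both intersection numbers—unchanged.
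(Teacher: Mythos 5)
Your proof is correct and follows essentially the same route as the paper's: one computes $\ch(\cF)=m\,\pi_S^*H+\pi_L^*h$ directly from the defining sequence \eqref{eqn:cF} and extracts the top-degree part of $\ch(\cF)\cdot\pi_S^*(c^\vee\sqrt{\td_S})$ for $c=a,b$. The additional care you take with the normalisation --- pinning down $\ell.\lambda(b)=1$ geometrically via the embedding of $L=\P\oH^0(I_\xi(mH))$ as a line in $|mH|$ together with Lemma~\ref{lemma:b=b}, and checking insensitivity to twists of the quasi-universal family --- is a sensible refinement, not a different method.
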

\begin{proof}
The proof is the same as in \cite[Lemma~4.11]{Onorati:Monodromy}; we quickly sketch the main points. 
First of all, 
\[ \ell.\lambda(a)=\pi_{L*}\left[\ch(\cF)\pi_S^*(a^\vee\sqrt{\td_S})\right]_3, \] 
where, by a direct computation using (\ref{eqn:cF}),
\[ \ch(\cF)=(0,m[H\times L]+[S\times\operatorname{pt}], mt[\operatorname{pt}\times L],0) \]
and
\[ a^\vee\sqrt{\td_S}=(-1,0,-1). \]
Therefore,
\[ \ell.\lambda(a)=-1. \]
Similary, since $b^\vee\sqrt{\td_S}=(0,0,1)$, we get
\[ \ell.\lambda(b)=1. \]
\end{proof}

Therefore we get
\begin{equation}\label{eqn:L} 
\ell=\lambda(a)-\lambda(b) 
\end{equation}
so that $\ell$ is integral, i.e.\ it lies in $\oH^2(M_{v}(S,H),\Z)$.

To conclude the proof of Proposition~\ref{prop:pex}, we will now show that $\Theta_v=\ell$. In fact, we already know that they are proportional, so it will be enough to find a class $\alpha\in\oH^2(M_v(S,H),\Z)$ such that 
\[ q_v(\Theta_v,\alpha)=q_v(\ell,\alpha). \]
We choose $\alpha=b_v$, where $b_v=p^*\sO(1)$ is the class of the fibration. 
Let us notice that, by construction, $\ell$ is the class of a curve $L$ that is a section for the fibration $p\colon M_v(S,H)\to|mH|$. Therefore, since $\ell$ is integral, we have $q_v(\ell,b_v)=1$.

On the other hand, if we put $\dim M_v(S,H)=2n$, then by Proposition~\ref{prop:Fujiki} and Theorem~\ref{thm:PR} we have 
    \[ \int_{M_v(S,H)}(\Theta_v+tb_v)^{2n}=\frac{(2n)!}{n!2^n} q_{v}(\Theta_v+tb_v)^n. \]
    Equalising the coefficients of $t^n$ on both sides, we get
    \[ \frac{(2n)!}{n!} q_v(\Theta_v,b_v)^n=\binom{2n}{n}\int_{M_v(S,H)} \Theta_v^n b_v^n=\binom{2n}{n}\int_{\Pic^{m^2d}(C)} \Theta_v^n=\frac{(2n)!}{n!}, \]
    where $\Pic^{m^2d}(C)$ is a general fibre of $p\colon M_v(S,H)\to\P^{m^2d+1}$, and where we used that $(\Theta_v)|_{\Pic^{m^2d}(C)}$ is the class of the theta divisor on $C$ to get that $\int_{\Pic^{m^2d}(C)} \Theta^n=n!$. 

    It follows that $q_v(\Theta_v,b_v)=1$, and hence 
    \begin{equation}
        \Theta_v=\ell=\lambda(a)-\lambda(b).
    \end{equation}
\endproof

\begin{rem}
    Let us write $v=mw$, where $w$ is a primitive Mukai vector and $m>1$. Let $H$ be an ample line bundle that is general with respect to both $v$ and $w$. The effective relative theta divisors $\Theta_v$ and $\Theta_w$ both have degree $-2$ and divisibility $1$: in fact the statement of Proposition~\ref{prop:pex} holds for every $m\geq1$. Let $i_{m,w}\colon  M_w(S,H)\to M_v(S,H)$ be the closed embedding as most singular locus. Then by \cite[Proposition~1.28]{OPR} and the definitions of $D_v$ and $D_w$ it follows that  
    \[ i^*(\Theta_v)=m\Theta_w. \]  
\end{rem}

\subsection{Proof of Theorem \ref{thm: RR of K3n type}}

Let $S$, $H$ and $v$ be as in \cref{section:theta}.
Then $M_v(S,H)$ is factorial, the Picard group of $X$ has rank $2$ and there is a natural lagrangian fibration $p\colon M_v(S,H)\to|mH|$. 

Let $D_v$ be the relative theta divisor and $\Theta_v$ its class (see \cref{defn:theta}). Let  $b_v := p^*\O(1)$, then by Proposition~\cref{prop:pex} we get $q_v(\Theta_v) = -2$ and $q_v(\Theta_v,b_v) = 1$. Moreover, the restriction of $\Theta_v$ to every smooth fiber $X_b$ defines a principal polarization.

\begin{lem}
    The class $\Theta_v$ is $p$-ample and $p_*\O_X(D_v) = \O_{\P^n}$.
\end{lem}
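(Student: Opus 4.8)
The plan is to prove the two assertions separately, reducing each to a fibrewise statement over the locus $|mH|^\circ$ of smooth curves and then controlling the behaviour over the discriminant. Throughout I work in the basis $\lambda(a),\,b_v=\lambda(b)$ of $\oH^{1,1}(M_v(S,H),\Z)$, using the numerical data of \cref{prop:pex} and \cref{lemma:intersections}: namely $q_v(\lambda(a))=q_v(b_v)=0$, $q_v(\lambda(a),b_v)=1$, and the section class $\ell=\lambda(a)-\lambda(b)$ with $\ell.\lambda(a)=-1$, $\ell.b_v=1$.

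For $p$-ampleness I would invoke the fibrewise criterion for relative ampleness of a proper morphism (EGA III, Th\'eor\`eme~4.7.1): $\Theta_v$ is $p$-ample if and only if $\Theta_v|_{X_b}$ is ample on every fibre $X_b=p^{-1}(b)$. Over a smooth curve $C\in|mH|$ the fibre is $\Pic^{m^2d}(C)$, and since $m^2d=g(C)-1$, the restriction $\Theta_v|_{X_b}$ is exactly the classical Riemann theta divisor, which is a principal polarisation and hence ample; this already gives $p$-ampleness over the dense open $|mH|^\circ$. To extend to all of $|mH|$ I would use the cone argument: a direct intersection computation shows that $\Theta_v+Nb_v=\lambda(a)+(N-1)b_v$ lies in the positive cone for $N\geq 2$ and satisfies $(\Theta_v+Nb_v).\ell=N-2>0$ as well as $(\Theta_v+Nb_v).C>0$ for every $p$-vertical curve $C$ once $N\gg 0$; hence $\Theta_v+Nb_v$ is ample and $\Theta_v$ is $p$-ample by the very definition of relative ampleness. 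The point to secure here is that every curve contracted by $p$ meets $\Theta_v$ positively, i.e.\ that the theta class stays ample on the compactified Jacobians of the singular members of $|mH|$.

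For the identification of $p_*\O_X(D_v)$ I would argue in two steps. First, $\O(D_v)|_{X_b}$ has Euler characteristic constant in $b$ by flatness, equal to $\Theta_v^n/n!=1$ as computed on a smooth fibre, where moreover $h^0=1$ and all higher cohomology vanishes (ample line bundle with $\chi=1$ on an abelian variety). Combined with cohomology and base change this shows that over $|mH|^\circ$ the sheaf $p_*\O(D_v)$ is a line bundle and $R^ip_*\O(D_v)=0$ for $i>0$. Second, I would trivialise this line bundle by hand: $D_v$ is effective and Cartier, so its defining equation is a global section $s\in\oH^0(M_v(S,H),\O(D_v))$, and since $\Theta_v$ is $p$-ample the divisor $D_v$ contains no fibre of $p$, so $s|_{X_b}$ is a nonzero element of the one-dimensional space $\oH^0(X_b,\O(D_v)|_{X_b})$ for every $b$. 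Therefore $s$ is a nowhere-vanishing section of the line bundle $p_*\O(D_v)$, which forces $p_*\O(D_v)\cong\O_{\P^n}$.

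The main obstacle in both parts is the behaviour over the discriminant locus of $|mH|$, where the fibres are compactified Jacobians of singular (possibly reducible or non-reduced) curves rather than abelian varieties: one must know that the relative theta class remains ample there, and that $\dim\oH^0(X_b,\O(D_v)|_{X_b})=1$ persists so that base change applies over all of $\P^n$. I expect this to follow from the standard theory of theta divisors on the compactified Jacobians arising in the Beauville--Mukai system; alternatively, the ampleness statement can be obtained purely from the global cone computation above, which uses only the intersection numbers already established on $M_v(S,H)$, leaving only the constancy of $h^0$ on special fibres to be checked directly.
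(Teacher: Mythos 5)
Your two-part strategy (fibrewise over the smooth locus, then control the discriminant) runs into genuine trouble in both halves, precisely at the points you flag yourself, and in both cases the needed fix is to abandon the fibrewise analysis. For $p$-ampleness, the fibrewise criterion forces you to prove ampleness of the theta class on the compactified Jacobians of \emph{all} members of $|mH|$, including reducible and non-reduced curves (which occur as soon as $m\geq2$); you do not supply this, and your fallback ``global cone computation'' does not close the gap: for a $p$-vertical curve $C$ one has $b_v.C=0$, so $(\Theta_v+Nb_v).C=\Theta_v.C$ is independent of $N$ and the clause ``once $N\gg0$'' buys nothing --- you are simply restating the unproven fibrewise positivity --- and in any case positivity against $\ell$ and against vertical curves falls well short of an ampleness criterion. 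The argument that works, and is essentially the paper's one-line proof, uses only the rank-two cone structure: $M_v(S,H)$ is projective with $\Pic=\langle\lambda(a),b_v\rangle$, and $b_v$ is nef with $q_v(b_v)=0$, so $\R_{\geq0}b_v$ is a boundary ray of both the positive cone and the nef cone; hence $\Theta_v+Nb_v=\lambda(a)+(N-1)b_v$ lies in the interior of the nef cone, i.e.\ is ample, for $N\gg0$, which is exactly relative ampleness of $\Theta_v$. No information about singular fibres is needed.

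For the pushforward, two steps are unjustified. First, ``since $\Theta_v$ is $p$-ample the divisor $D_v$ contains no fibre of $p$'' is a non sequitur: $D_v$ has dimension $2n-1>n$, so it could contain an entire fibre as a proper subvariety without contradicting relative ampleness (the restriction of $\O_X(D_v)$ to such a fibre would still be ample; only the section $s$ would vanish there). Second, to promote $s$ to a nowhere-vanishing section of $p_*\O_X(D_v)$ you need local freeness together with cohomology and base change over the discriminant, i.e.\ $h^0(X_b,\O_X(D_v)|_{X_b})=1$ for \emph{every} $b$, which you explicitly leave open. The paper avoids both issues: it quotes \cite{Rios2020} (Theorem~3) for the fact that $p_*\O_X(D_v)$ is a line bundle on $\P^n$, and then uses that $q_v(\Theta_v)=-2<0$ forces the prime exceptional divisor $D_v$ to be rigid, so $h^0(X,\O_X(D_v))=1$; since $h^0(X,\O_X(D_v))=h^0(\P^n,p_*\O_X(D_v))$ and the only line bundle $\O_{\P^n}(d)$ with exactly one section has $d=0$, the conclusion follows. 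If you want to keep your approach, the two inputs you must actually prove are ampleness and $h^0=1$ for the theta class on the degenerate compactified Jacobians; as written, the proposal assumes rather than establishes them.
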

\begin{proof}
Since $\Theta_v$ is effective and $b_v$ is nef, we have that the divisor $\Theta_v + kb_v$ will be ample for some $k>0$. 

By \cite[Theorem~3]{Rios2020} the sheaf $p_*\O_X(D_v)$ is a line bundle on $\P^n$. Since $q_v(\Theta_v)<0$ we get $h^0(X,\O_X(D_v)) =1$ and, using that $\oH^0(X,\O_X(D_v)) = \oH^0(\P^n,p_*\O_X(D_v))$, this implies that $p_*\O_X(D_v) = \O_{\P^n}$.
\end{proof}

Recall that by \cref{exmp: RR for known HK} there are exactly two types of Huybrechts--Riemann--Roch polynomials for smooth hyperKähler manifolds. The main result of this section completes the computation of the Huybrechts--Riemann--Roch polynomials for all, not necesarily smooth, moduli spaces of sheaves on a K3 surface.

\proof[Proof of Theorem~\ref{thm: RR of K3n type}]
By definition, $X$ is locally trivial deformation equivalent to a moduli space $M_v(S,H)$ as in \cref{section:moduli spaces of sheaves on K3}. By Lemma~\ref{lem:any MS is def of BM}, it is then locally trivial deformation equivalent to a moduli space with Mukai vector of the form $(0,mH,0)$. Since the Huybrechts--Riemann--Roch polynomial is invariant under locally trivial deformations (see \cref{RRpolynomial}), it is enough to prove the claim in this case.

Let then $D_v$ be the effective relative theta divisor, and $b_v$ the class of the fibration.
The higher direct images of $p_*\O_X(D_v+mb_v)$ vanish by \cite[Theorem 3]{Rios2020}, therefore
\begin{equation}\label{eq:binomialRR}
    \chi(X,\Theta_v+mF) = \chi(\P^n, \O(m)) = \binom{m+n}{n}.
\end{equation}
By a direct computation we have $q(\Theta_v+mb_v) =  q(\Theta_v) +2m$. Substituting $t = q(\Theta_v + mF)$ in \eqref{eq:binomialRR}, we eventually get
\[
\RR_X(t) := \binom{\frac{t-q(\Theta_v)}{2} + n}{n} = \binom{\frac{t}{2} + n + 1}{n}.
\]
This ends the proof.
\endproof

\section{Lagrangian fibrations of moduli spaces of sheaves on K3 surfaces}\label{section:SYZ for M(m,k)}

The goal of this section is to prove the following result.

\begin{thm}\label{thm:SYZ for M(m,k)}
    Let $X$ be a primitive symplectic variety of type $\operatorname{K3}^{[k]}_m$. If $L\in\Pic(X)$ is a line bundle that is nef, primitive and isotropic, then $L$ induces a lagrangian fibration (in the sense of Definition~\ref{defn:L induces a lagr fibr}).
\end{thm}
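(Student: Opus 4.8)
The plan is to reduce the statement, by a monodromy argument, to an explicit lagrangian fibration on a Beauville--Mukai system and then to invoke Theorem~\ref{thm:main result on lagr}.

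First I would fix a good model. By Lemma~\ref{lem:any MS is def of BM} the variety $X$ is locally trivial deformation equivalent to a Beauville--Mukai system $p_v\colon M_v(S,H)\to |mH|$, which we may take with $\Pic(S)=\Z H$ and $v=(0,mH,ms)$. By Theorem~\ref{thm:KL instead of Sch} the morphism $p_v$ is a lagrangian fibration, and by Lemma~\ref{lemma:b=b} its class $b_v=p_v^*\sO(1)=\lambda_v(b)$ is primitive; being a pullback of an ample class from the base it is moreover semiample, hence nef, and $q_v(b_v)=0$. Thus $b_v$ is a nef, primitive, isotropic class that \emph{defines a lagrangian fibration} in the sense of Definition~\ref{defn:L induces a lagr fibr}. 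Fixing a locally trivial parallel transport operator coming from this deformation equivalence identifies $\oH^2(X,\Z)$ with $\oH^2(M_v(S,H),\Z)\cong v^\perp$, under which $c_1(L)$ becomes a primitive isotropic class $\ell\in v^\perp$ of type $(1,1)$.

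The core step is to produce a locally trivial parallel transport operator $\mathsf P\colon \oH^2(M_v(S,H),\Z)\to\oH^2(X,\Z)$ with $\mathsf P(b_v)=c_1(L)$. By Theorem~\ref{thm:PR}(5) the monodromy group is $\Mon(M_v(S,H))=\mathsf W(v^\perp)$, and by \cite{OPR} it is carried isomorphically, via restriction to the most singular stratum $Y$ of type $\operatorname{K3}^{[k]}$, onto the monodromy group of $Y$. Hence the classification of $\Mon$-orbits of primitive isotropic classes in $\oH^2(X,\Z)$ reduces to Markman's classification in $\oH^2(Y,\Z)$ (\cite[Section~2]{Markman:LagrangianFibrations}): two such classes lie in the same orbit precisely when they share the same divisibility and orientation datum. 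I would therefore choose the integer $s$ (equivalently the Mukai vector $v=(0,mH,ms)$) so that the divisibility of $b_v$ in $v^\perp$ matches that of $\ell$, and then apply the orbit classification to obtain an orientation-preserving isometry carrying $b_v$ to $\pm\ell$. Since both $b_v$ and $L$ are nef, they lie in the closures of the respective positive cones; by the orientation discussion of Section~\ref{section:orientation} this pins down the sign, so that after composing with a suitable monodromy operator we obtain $\mathsf P$ realised as a genuine locally trivial parallel transport operator with $\mathsf P(b_v)=c_1(L)$.

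Finally I would conclude by Theorem~\ref{thm:main result on lagr} applied to $X_1=M_v(S,H)$, $L_1=b_v$, $X_2=X$, $L_2=L$: the class $b_v$ is nef and defines a lagrangian fibration, $L$ is nef, and $\mathsf P(b_v)=c_1(L)$, whence $L$ defines a lagrangian fibration on $X$. The main obstacle is the central step: the $\Mon$-orbit classification of primitive isotropic vectors and, above all, matching the divisibility of $\ell$ with one realisable by a fibration class $b_v$ on some Beauville--Mukai model, together with the careful bookkeeping of orientations needed to upgrade the resulting lattice isometry to an orientation-preserving \emph{parallel transport} operator sending the nef class $b_v$ to the nef class $L$ rather than to its negative.
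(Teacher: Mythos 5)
Your proposal follows essentially the same route as the paper: reduce the monodromy-orbit classification of the primitive isotropic class to the most singular stratum $Y$ of type $\operatorname{K3}^{[k]}$ via $i_Y^*$, invoke Markman--Wieneck to produce a Beauville--Mukai model whose fibration class lies in the same orbit (this is exactly Proposition~\ref{prop:quasi SYZ}), and conclude with Theorem~\ref{thm:main result on lagr}. The step you flag as the main obstacle --- matching divisibilities and the orientation bookkeeping --- is precisely what the paper outsources to \cite[Sections~2 and 3]{Markman:LagrangianFibrations} and \cite[Section~6]{Wieneck2016} after passing to $Y$, so there is no substantive difference in approach.
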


We start by extending a result due to Markman \cite[Sections~2 and 3]{Markman:LagrangianFibrations} (see also \cite[Section~6]{Wieneck2016}).

\begin{prop}\label{prop:quasi SYZ}
    Let $X$ be a primitive symplectic manifold of type $\operatorname{K3}^{[k]}_m$, and let $h\in\operatorname{NS}(X)$ be a primitive and isotropic class of divisibility $d$. Then there exists a Beauville--Mukai system $p_v\colon M_v(S,H)\to|mH|$ as in \cref{section:BM system} and a locally trivial parallel transport operator
    \[ \mathsf{P}\colon \oH^2(X,\Z)\longrightarrow \oH^2(M_v(S,H),\Z) \]
    such that $\mathsf{P}(h)=p_v^*\sO(1)$.
\end{prop}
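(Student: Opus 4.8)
The plan is to classify the class $h$ up to locally trivial parallel transport and then to match it with the fibration class of an explicitly chosen Beauville--Mukai system. The mechanism is that for $X$ of type $\operatorname{K3}^{[k]}_m$ both the second cohomology lattice and the monodromy group agree with those of the smooth stratum: by \cref{thm:PR}(3) one has $\oH^2(X,\Z)\cong v^\perp=w^\perp\cong U^{\oplus3}\oplus E_8(-1)^{\oplus2}\oplus\langle-2(k-1)\rangle$, and by \cref{thm:PR}(5) the group $\Mon(X)=\mathsf{W}(v^\perp)$ does not depend on $m$. First I would use the closed embedding $i\colon Y\hookrightarrow X$ of the most singular stratum $Y$, which is of type $\operatorname{K3}^{[k]}$, together with \cite{OPR}: since $i^*$ is $m$ times an isometry and identifies the monodromy groups of $X$ and $Y$, classifying the $\Mon(X)$-orbits of primitive isotropic vectors in $\oH^2(X,\Z)$ is exactly the same problem as on $Y$, solved by Markman in \cite[Sections~2 and 3]{Markman:LagrangianFibrations}. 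The statement I would extract is that, among positively oriented classes, the orbit of a primitive isotropic vector is governed solely by its divisibility $d$, and that such a vector exists precisely when $d^2\mid(k-1)$.

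Next I would construct a Beauville--Mukai system whose fibration class has divisibility exactly $d$. For this I would take a projective K3 surface $S$ with $\Pic(S)=\Z H_0$ and $H_0^2=2(k-1)/d^2$, together with the primitive Mukai vector $w=(0,dH_0,s)$ with $\gcd(d,s)=1$, so that $w^2=2(k-1)$ and $v=mw$ defines a system $p_v\colon M_v(S,H_0)\to|mdH_0|$ of type $\operatorname{K3}^{[k]}_m$. By \cref{lemma:b=b} the fibration class is $p_v^*\sO(1)=\lambda_v(b)$ with $b=(0,0,1)$, and a direct computation in $v^\perp=w^\perp=\{(r,c,t)\mid d\,(c\cdot H_0)=rs\}$ shows that the coprimality $\gcd(d,s)=1$ forces $d\mid r$ on every element, whence $\divv(p_v^*\sO(1))=d$. (With a primitive support class, as in the simplest instances of \cref{section:BM system}, the fibration class has divisibility $1$; it is precisely the divisibility of the support class that must be tuned.) Thus $p_v^*\sO(1)$ is primitive, isotropic and of divisibility $d$, so by the first paragraph it lies in the same $\mathsf{W}(v^\perp)$-orbit as the image of $h$ under any parallel transport operator.

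Finally I would assemble the operator. Fixing a connected component and the orientation conventions of \cref{section:orientation} --- so that the relevant isotropic class sits in the closure of the positive cone, which normalizes the sign of $h$ and makes it compatible with the nef class $p_v^*\sO(1)$ --- I note that $X$ and $M_v(S,H_0)$ are locally trivially deformation equivalent, so at least one locally trivial parallel transport operator $\mathsf{P}_0\colon\oH^2(X,\Z)\to\oH^2(M_v(S,H_0),\Z)$ exists, and the set of all of them is a torsor under $\Mon(M_v(S,H_0))=\mathsf{W}(v^\perp)$. Since $\mathsf{P}_0(h)$ and $p_v^*\sO(1)$ are primitive isotropic of the same divisibility and orientation, there is $g\in\mathsf{W}(v^\perp)$ with $g(\mathsf{P}_0(h))=p_v^*\sO(1)$; then $\mathsf{P}:=g\circ\mathsf{P}_0$ is again a locally trivial parallel transport operator and satisfies $\mathsf{P}(h)=p_v^*\sO(1)$.

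I expect the realization step to be the crux. The systems of \cref{section:BM system} with a primitive support class only yield fibration classes of divisibility $1$, so to reach divisibility $d$ one must allow a support class equal to $d$ times a primitive polarization and check that the resulting moduli space is still of type $\operatorname{K3}^{[k]}_m$; this reduces to the integrality of $H_0^2=2(k-1)/d^2$, i.e.\ to $d^2\mid(k-1)$, and to the $v$-generality of the polarization (automatic when $\Pic(S)$ has rank one). The remaining, more formal, difficulty is to make the reduction to the smooth stratum along $i^*$ rigorous and to track orientations carefully, so as to land on $+p_v^*\sO(1)$ and not on its negative.
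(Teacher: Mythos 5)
Your overall strategy is the paper's own: reduce to the $\operatorname{K3}^{[k]}$ lattice via the most singular stratum and the identification $\Mon(X)\cong\mathsf{W}(w^\perp)$ from \cite{OPR}, realise the class by an explicit Beauville--Mukai system, and produce $\mathsf{P}$ by composing some parallel transport operator with a monodromy operator. The torsor argument in your last paragraph and the computation that $b=(0,0,1)$ has divisibility $d$ in $(0,dH_0,s)^\perp$ when $\gcd(d,s)=1$ are both correct, and together they are a legitimate substitute for the paper's commutative diagram \eqref{eqn:boh}, \cite[Theorem~B.1]{OPR} and Lemma~\ref{lemma:b=b}.

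The gap is the classification step: divisibility is \emph{not} a complete $\mathsf{W}$-invariant of primitive isotropic classes in $\Lambda=U^{\oplus3}\oplus E_8(-1)^{\oplus2}\oplus\langle-2(k-1)\rangle$. Write a primitive isotropic $\ell$ of divisibility $d$ as $\ell=df'+a\delta$ with $f'$ primitive in the unimodular part, $\delta$ the generator of $\langle-2(k-1)\rangle$ and $\gcd(a,d)=1$. By Eichler's criterion the orbit of $\ell$ under $\mathsf{W}$, inside a fixed component of the isotropic cone, is determined by $q(\ell)=0$ together with the image of $\ell/d$ in the discriminant group $\Z/2(k-1)\Z$ taken up to sign (since $\mathsf{W}$ acts on the discriminant only by $\pm\operatorname{id}$), and this image records the residue $a\bmod d$ up to sign. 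Hence for any $d$ with $\varphi(d)>2$ --- the first instance is $d=5$, which occurs as soon as $25\mid k-1$ --- there are $\varphi(d)/2>1$ distinct orbits of primitive isotropic classes of divisibility $d$, all realised in $\Lambda$. Your single system $(0,dH_0,s)$ with one arbitrary $s$ coprime to $d$ lands in exactly one of these orbits, so the asserted $g\in\mathsf{W}(v^\perp)$ with $g(\mathsf{P}_0(h))=p_v^*\sO(1)$ need not exist. This is precisely why Markman's Sections~2--3 introduce a finer monodromy invariant and why the realisation step must tune $s$ (its residue modulo $d$) to match the invariant of $h$; the paper avoids the issue by importing the full Markman--Wieneck statement on the smooth stratum $Y$ --- existence of $(S,w,\mathsf{P}_0)$ with $\mathsf{P}_0(h_0)=p_w^*\sO(1)$ --- as a black box and only then lifting to $X$ and $M_{mw}$. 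Your argument is repaired by replacing ``same divisibility and orientation'' with ``same Markman invariant'' and choosing $s$ accordingly; the rest stands.
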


\begin{proof}
    
    Let $Y\subset X$ be the most singular locus of $X$. Then $Y$ is an irreducible holomorphic symplectic manifold of type $\operatorname{K3}^{[k]}$.    
    Let us denote by 
    \[ i_Y\colon Y\longrightarrow X \]
    the closed embedding. Then the pullback in cohomology 
    \[ i_Y^*\colon\oH^2(X,\Z)\longrightarrow\oH^2(Y,\Z) \]
    is $m$ times a Hodge isometry. In fact the stratification by singular loci of $X$ behaves well in locally trivial families, so that the embedding $i_Y$ fits in a local system and it will be enough to prove the claim for a preferred member of a family. By definition we can choose a family having a moduli space $M_v(S,H)$ as member, so that the claim follows from \cite[Proposition~1.28]{OPR}.

    It follows that there exists an isotropic class $h_0\in\operatorname{NS}(Y)$ of divisibility $d$ such that $i_Y^*(h)=mh_0$.

    By the results in \cite[Sections~2 and 3]{Markman:LagrangianFibrations} and \cite[Section~6]{Wieneck2016}, it follows that there exists a projective K3 surface of Picard rank $1$, a Beaville--Mukai system $p_w\colon M_w(S,H)\to|H|$, with $w=(0,H,s)$ a primitive Mukai vector, and a parallel transport operator 
    \[ \mathsf{P}_0\colon\oH^2(Y,\Z)\longrightarrow\oH^2(M_w(S,H),\Z) \]
    such that $\mathsf{P}_0(h_0)=p_w^*\sO(1)$.

    Put $v=mw$ and consider the Beauville--Mukai system $p_v\colon M_v(S,H)\to|mH|$. We claim that there exists a locally trivial parallel transport operator $\mathsf{P}\colon \oH^2(X,\Z)\longrightarrow \oH^2(M_v(S,H),\Z)$ such that the following diagram 
    \begin{equation}\label{eqn:boh} 
    \xymatrix{
    \oH^2(X,\Z) \ar[d]_{i_Y^*} \ar[r]^-{\mathsf{P}} & \oH^2(M_v(S,H),\Z) \ar[d]^{i_{m,w}^*}  \\
     \oH^2(Y,\Z) \ar[r]^-{\mathsf{P}_0} & \oH^2(M_w(S,H),\Z)
    } 
    \end{equation}
    is commutative.

    Assuming the claim, we have
    \[ i_{m,w}^*(\mathsf{P}(h))=\mathsf{P}_0(i_Y^*(h))=\mathsf{P}_0(mh_0)=p_w^*\sO(m). \]
    By \cref{lemma:b=b} then it follows that $\mathsf{P}(h)=p_v^*\sO(1)$, thus concluding the proof.
   
    To prove the claim, let us take a locally trivial parallel transport operator 
    \[ \mathsf{P}'\colon\oH^2(M_v(S,H),\Z)\to\oH^2(X,\Z) \]
    and let $\mathsf{P}_0'\colon\oH^2(M_w(S,H),\Z)\to\oH^2(Y,\Z)$ be the induced parallel transport operator between the most singular loci. In particular we have $\mathsf{P}'=(i_Y^*)^{-1} \circ \mathsf{P}_0' \circ i_{m,w}^*$. Then by definition $g_0=\mathsf{P}_0'\circ\mathsf{P}_0\in\operatorname{Mon}^2(Y)$.
    By \cite[Theorem~B.1]{OPR} we have\footnote{Notice that both $i_Y^*$ and $i_{m,w}^*$ are $m$--times an isometry.} 
    \begin{align*} 
    \mathsf{P}'\circ \left((i_{m,w}^*)^{-1} \circ \mathsf{P}_0\circ i_Y^*\right) & = \left( (i_Y^*)^{-1} \circ \mathsf{P}_0' \circ i_{m,w}^* \right)\circ \left((i_{m,w}^*)^{-1} \circ \mathsf{P}_0\circ i_Y^*\right) \\
     & = (i_Y^*)^{-1}\circ g_0\circ i_Y^*\in\Mon(X). 
    \end{align*}
    Therefore 
    \[ \mathsf{P}:=(i_{m,w}^*)^{-1} \circ \mathsf{P}_0\circ i_Y^*\colon\oH^2(X,\Z)\longrightarrow\oH^2(M_v(S,H),\Z) \]
    is the desired locally trivial parallel transport operator.
\end{proof}

\proof[Proof of Theorem~\ref{thm:SYZ for M(m,k)}]
By Proposition~\ref{prop:quasi SYZ}, there exists a locally trivial parallel transport operator 
 \[ \mathsf{P}\colon \oH^2(X,\Z)\longrightarrow \oH^2(M_v(S,H),\Z) \]
    such that $\mathsf{P}(L)=p_v^*\sO(1)$. Since $L$ is nef by hypothesis, the claim follows at once from Theorem~\ref{thm:main result on lagr}.
\endproof

\begin{rem}\label{remark:L is always primitive}
    Theorem~\ref{thm:SYZ for M(m,k)} and Theorem~\ref{thm: RR of K3n type} imply that if $X$ is of type $\operatorname{K3}^{[k]}_m$ and $f\colon X\to\P^n$ is a lagrangian fibration, then $b=f^*\sO(1)$ is primitive. This follows as in the proof of \cite[Lemma~3.5.(ii)]{Wieneck2016}.
\end{rem}

\subsection{Polarisation type}
As a corollary of our previous results, let us compute here the polarisation type of lagrangian fibrations of varieties of type $\operatorname{K3}^{[k]}_m$. We refer to Section~\ref{section:polarisation types} for the relevant definitions.

\begin{thm}\label{thm:polarisation type of M(m,k)}
    Let $f\colon X\to\P^n$ be a lagrangian fibration, with $X$ a primitive symplectic variety of type $\operatorname{K3}^{[k]}_m$.
    Then the polarisation type of $f$ is 
    \[ \mathrm{d}(f)=(1,\dots,1). \]
\end{thm}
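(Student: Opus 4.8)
The plan is to reduce, via the deformation invariance of the polarisation type, to an explicit Beauville--Mukai system, whose smooth fibres are Jacobians carrying the principal theta polarisation, so that the type is forced to be $(1,\dots,1)$.

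Since $f\colon X\to\P^n$ is a lagrangian fibration, the class $L=c_1(f^*\sO_{\P^n}(1))$ is nef and isotropic, and it is primitive by Remark~\ref{remark:L is always primitive}. By Proposition~\ref{prop:quasi SYZ} there exist a Beauville--Mukai system $p_v\colon M_v(S,H)\to|mH|$, with $v=mw$ and $w=(0,H,s)$ a primitive Mukai vector, together with a locally trivial parallel transport operator $\mathsf{P}\colon\oH^2(X,\Z)\to\oH^2(M_v(S,H),\Z)$ such that $\mathsf{P}(L)=p_v^*\sO(1)$. Both $L$ and $p_v^*\sO(1)$ are nef and isotropic, and $L$ induces the lagrangian fibration $f$; hence Theorem~\ref{thm:main result on lagr} applies and shows that $f$ and $p_v$ are locally trivial deformation equivalent \emph{as lagrangian fibrations}. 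As the polarisation type is invariant under such deformations by Theorem~\ref{thm:polarisation type is invariant}, we obtain $\mathrm{d}(f)=\mathrm{d}(p_v)$, and it remains only to compute the polarisation type of a Beauville--Mukai system.

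For this I would argue as in Section~\ref{section:theta}, the case $s=0$ being written out explicitly there (the argument below being insensitive to $s$). The general fibre of $p_v$ over a smooth curve $C\in|mH|$ is $\Pic^{\delta}(C)$, a torsor under $\operatorname{Jac}(C)$, which is a principally polarised abelian variety of dimension $n$ carrying the theta polarisation $\theta$. By the construction of Section~\ref{section:polarisation types}, the polarisation type of $p_v$ is read off from the primitive ample generator of the rank-one image of the restriction $\oH^2(M_v(S,H),\Z)\to\oH^2(\Pic^{\delta}(C),\Z)$ (cf.\ Lemma~\ref{lem:restrictionVoisinsingular}). The effective relative theta divisor $\Theta_v$ of Proposition~\ref{prop:pex} restricts on a general fibre to $\theta$, and the computation carried out in the proof of Theorem~\ref{thm: RR of K3n type} gives $\int_{\Pic^{\delta}(C)}\theta^{n}=n!$, identifying $\theta$ with the principal theta polarisation. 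In particular $\theta$ is primitive in $\operatorname{NS}(\Pic^{\delta}(C))$ and lies in the image of the restriction map, so it generates that rank-one image and is therefore the primitive polarisation of the fibration. Since a principal polarisation has type $(1,\dots,1)$, we conclude $\mathrm{d}(p_v)=(1,\dots,1)$, hence $\mathrm{d}(f)=(1,\dots,1)$.

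The main point to be careful about is the identification in the final step: one must check that the \emph{intrinsic} polarisation of the fibration, defined through the variation of Hodge structure $R^1p_{v*}\Z$ in Section~\ref{section:polarisation types}, coincides with the theta polarisation of the Jacobian fibre rather than with a proper multiple of it. This is precisely what the integral $\int_{\Pic^{\delta}(C)}\theta^{n}=n!$ certifies: a class $c\,\theta$ with $\theta$ the principal theta class would instead yield $c^{n}\,n!$, forcing $c=1$. Combining this with $\mathrm{d}(f)=\mathrm{d}(p_v)$ finishes the proof.
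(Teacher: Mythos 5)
Your proof follows the paper's argument exactly: reduce via Proposition~\ref{prop:quasi SYZ}, Theorem~\ref{thm:main result on lagr} and Theorem~\ref{thm:polarisation type is invariant} to a Beauville--Mukai system, whose general fibre is a principally polarised Jacobian. The only difference is that you spell out the identification of the intrinsic primitive polarisation of Section~\ref{section:polarisation types} with the theta class via $\int_{\Pic^{\delta}(C)}\theta^{n}=n!$, a verification the paper leaves implicit; this is correct and a harmless (indeed welcome) elaboration of the same proof.
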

\begin{proof}
    By Proposition~\ref{prop:quasi SYZ} and Theorem~\ref{thm:main result on lagr}, $f$ is locally trivial deformation equivalent, as a lagrangian fibration, to a Beauville--Mukai system. By Theorem~\ref{thm:polarisation type is invariant}, the polarisation type is invariant under locally trivial deformations of lagrangian fibrations, so that it is enough to prove the statement in the case of a Beauville--Mukai system.

    On the other hand, as it is clear from its construction (see Section~\ref{section:BM system}), the general fibre of a Beaville--Mukai system is the jacobian of a curve, so that it is principally polarised. The claim follows. 
\end{proof}


\bibliography{bib.bib}{}
\bibliographystyle{alpha}

\end{document}